\documentclass[11pt,a4paper,twoside]{amsart}
\usepackage{amsmath,amsfonts,amsthm,amsopn,color,amssymb}
\usepackage{enumitem}
\usepackage{palatino}
\usepackage{graphicx}
\usepackage[colorlinks=true]{hyperref}
\hypersetup{urlcolor=blue, citecolor=red, linkcolor=blue}
\usepackage[latin1]{inputenc}  



\def\Xint#1{\mathchoice
  {\XXint\displaystyle\textstyle{#1}}%
  {\XXint\textstyle\scriptstyle{#1}}%
  {\XXint\scriptstyle\scriptscriptstyle{#1}}%
  {\XXint\scriptscriptstyle\scriptscriptstyle{#1}}%
  \!\int}
\def\XXint#1#2#3{{\setbox0=\hbox{$#1{#2#3}{\int}$}
  \vcenter{\hbox{$#2#3$}}\kern-.5\wd0}}
\def\dashint{\Xint-}



\newcommand{\diva}{\underline{\mathbf{\al}}}
\newcommand{\divam}{\underline{\mathbf{\al}}_m}

\newcommand{\al}{\alpha}

\newcommand{\dt}{\delta}
\newcommand{\eps}{\varepsilon}

\newcommand{\lm}{\lambda}



\newcommand{\R}{\mathbb{R}}

\newcommand{\Sd}{\mathbb{S}^2}

\newcommand{\weakto}{\rightharpoonup}

\DeclareMathOperator{\diam}{diam}
\DeclareMathOperator{\dist}{dist}
\DeclareMathOperator{\distd}{d}
\DeclareMathOperator{\interior}{int}
\DeclareMathOperator{\meas}{meas}

 \DeclareMathOperator{\Id}{Id}

\renewcommand{\l }{\lambda}

\renewcommand{\O}{\Omega}

\newcommand{\N}{\mathbb{N}}

\def\bbm[#1]{\mbox{\boldmath $#1$}}
\newcommand{\beq }{\begin{equation}}
\newcommand{\eeq }{\end{equation}}

\newtheorem{theorem}{Theorem}[section]
\newtheorem{lemma}[theorem]{Lemma}

\newtheorem{proposition}[theorem]{Proposition}
\newtheorem{remark}[theorem]{Remark}


\def\sideremark#1{\ifvmode\leavevmode\fi\vadjust{\vbox to0pt{\vss
 \hbox to 0pt{\hskip\hsize\hskip1em
 \vbox{\hsize3cm\tiny\raggedright\pretolerance10000
  \noindent #1\hfill}\hss}\vbox to8pt{\vfil}\vss}}}%


\title[Existence and non existence results for the singular Nirenberg problem.]
{Existence and non existence results for the singular Nirenberg problem.
}
\author{ Francesca de Marchis and Rafael L\'{o}pez-Soriano}
\address{Francesca De Marchis, Dipartimento di Matematica, Università di Roma {\em Sapienza}, P.le Aldo Moro 5, 00185 Roma, Italy.}
\email{demarchis@mat.uniroma1.it}
\address{Rafael L\'{o}pez-Soriano, Departamento An\'{a}lisis Matem\'{a}tico, Universidad de Granada, Campus Fuentenueva, 18071 Granada, Spain.}
\email{rafals@ugr.es}
\thanks{}

\email{}
\date{}

\keywords{Prescribed Gaussian curvature problem, Conical singularities, Variational methods, Min-max schemes.}

\subjclass[2010]{35J20, 35R01, 53A30.}

\begin{document}

\maketitle

\begin{abstract}
In this paper we study the problem, posed by Troyanov in \cite{Troy}, of prescribing the Gaussian curvature under a conformal change of the metric on surfaces with conical singularities. Such geometrical problem can be reduced to the solvability of a nonlinear PDE with exponential type non-linearity admitting a variational structure. In particular, we are concerned with the case where the prescribed function $K$ changes sign.
When the surface is the standard sphere, namely for the singular Nirenberg problem, we give sufficient conditions on $K$, concerning mainly the regularity of its nodal line and the topology of its positive nodal region, to be the Gaussian curvature of a conformal metric with assigned conical singularities.
\

Besides, we find a class of functions on $\mathbb{S}^2$ which do not verify our conditions and which can not be realized as the Gaussian curvature of any conformal metric with one conical singularity. This shows that our result is somehow sharp.
\end{abstract}

\section{Problem}\label{Introduction}
\setcounter{equation}{0}
On a compact surface $(\Sigma,g)$ we consider the equation

\beq\label{eqdelta}
-\Delta_g v=\lm\left(\frac{K e^{v}}{\int_{\Sigma}Ke^{v}dV_{g}}-\frac1{|\Sigma|}\right)-4\pi\sum_{j=1}^m\alpha_j\left(\delta_{p_j}-\frac1{|\Sigma|}\right)\quad\;\mbox{in}\;\Sigma.
\eeq
Here $\Delta_g$ denotes the Laplace-Beltrami operator, $dV_g$ the volume element relative to the metric $g$ and $|\Sigma|$ the area of $\Sigma$, while $\lambda$ is a positive parameter, $K:\Sigma \to \mathbb{R}$ is a Lipschitz function, $\alpha_j>0$, and $\delta_{p_j}$ is the Dirac measure centered at point $p_j \in \Sigma$.

The analysis of \eqref{eqdelta} is motivated by the study of vortex type configurations in the Electroweak theory of Glashow-Salam-Weinberg \cite{Lai} and in Self-Dual Chern-Simons theories \cite{Dun}. In the monographs \cite{Tarbook,yang}, and in \cite{TarAnal} the reader can find further details and a wide set of references concerning these applications.

\

However, in this paper we focus on the geometric meaning of \eqref{eqdelta}, which appears in the prescribed Gauss curvature problem. Indeed if $\tilde g$ is a conformal metric to $g$ on $\Sigma$, namely $\tilde{g} =e^u g$, and $H_{\tilde{g}}$, $H_g$ are the Gaussian curvatures relative to these metrics, then the logarithm of the conformal factor satisfies the equation
\beq\label{presreg}
-\Delta_g u + 2 H_{g} =2 H_{\tilde{g}}e^u \quad\mbox{in}\quad \Sigma.
\eeq

For an assigned Lipschitz function $H$ defined on $\Sigma$, a classical problem is to find a conformal metric $\tilde{g}$ having $H$ as the Gaussian curvature: this amounts to solve \eqref{presreg} with $H_{\tilde{g}}=H$. The solvability of this problem depends on the Euler characteristic of $\Sigma$, $\chi(\Sigma)$. Actually, if $\chi(\Sigma) = 0$, the problem is completely solved in \cite{KazWar}; whereas for the case $\chi(\Sigma)<0$ there are necessary, \cite{KazWar}, and sufficient conditions, \cite{Ber,Aub,BGS,DePR}, but the problem is not completely settled.

The problem of prescribing Gaussian curvature on $\mathbb{S}^2$ endowed with the standard metric $g_0$, proposed initially by Nirenberg, is the most delicate case and the known results are partial, \cite{Aub,ChYg1,ChYg2,ChenLiAnn,ChenLins,ChenLiDCDS,KazWar}. For the Nirenberg problem, equation \eqref{presreg} can be reformulated as
\beq\label{Nirenberg}
-\Delta_{g_0} u=8\pi\left(\frac{H e^{u}}{\int_{\mathbb{S}^2}He^{u}dV_{g_0}}-\frac1{|\mathbb{S}^2|}\right)\quad\mbox{in}\quad \mathbb{S}^2,
\eeq
which corresponds to \eqref{eqdelta} with $\lambda=8\pi$ and $\alpha_j=0$ for any $j$.

Throughout the paper we will refer to \eqref{Nirenberg} as the \emph{regular} Nirenberg problem. More generally, problem \eqref{eqdelta} appears when one allows the conformal class to contain metrics that introduce conical-type singularities on $\Sigma$.\\ 
Following the pioneer works of Troyanov \cite{Troy,Troy2}, we say that $(\Sigma, \tilde{g})$ defines a punctured Riemann surface $\Sigma \setminus \{p_1, \ldots, p_m\}$ that admits a conical singularity of order $\alpha_j>-1$ at the point $p_j$ with $j=1, \ldots ,m$, if in a coordinate system $z = z(p)$ around $p_j$ centered at the origin, i.e. $z(p_j) = 0$, we have
$$
\tilde{g}(z)=|z|^{2\alpha_j}e^w|dz|^2,
$$
with $w$ a smooth function. In other words, as a differentiable manifold, $\Sigma$ admits a tangent cone with vertex at $p_j$ and total angle $2\pi(\alpha_j + 1)$, for $j = 1, \ldots, m$.

For a given Lipschitz function $K$, we seek a metric $\tilde{g}$, conformal to $g$ in $\Sigma \setminus \{p_1, \ldots, p_m\}$, namely $\tilde{g}=e^v g$ in the punctured surface, admitting conical singularities of orders $\alpha_j$'s at the points $p_j$'s and having $K$ as the associated Gaussian curvature. Analogously to the regular problem, we can reduce such geometrical question to the solvability of the differential equation
\beq\label{presregs}
-\Delta_g v + 2 K_{g} =2 K e^v -4\pi \sum_{j=1}^m \alpha_j \delta_{p_j} \quad\mbox{in}\quad \Sigma,
\eeq
where $K_g$ is the Gaussian curvature associated to the metric $g$.

Integrating \eqref{presregs} and applying the Gauss-Bonnet Theorem, one immediately obtains 
\beq\label{EulerchS}
2 \int_{\Sigma} Ke^v dV_g = 2 \int_{\Sigma} K_{g} dV_g+ 4\pi \sum_{j=1}^m \alpha_j=4\pi\left( \chi(\Sigma) + \sum_{j=1}^m \alpha_j\right) .
\eeq

As for the regular case, the solvability of \eqref{presregs} depends crucially on the value of the generalized Euler characteristic for singular surfaces, defined as follows
\beq\label{EulerchS2}
\chi(\Sigma,\divam)=  \chi(\Sigma) + \sum_{j=1}^m \alpha_j.
\eeq

In \cite{Troy} the case $\chi(\Sigma,\divam)\leq 0$ has been treated obtaining existence results analogous to the ones for the regular case, \cite{Ber,KazWar}.

It is worth to notice that for $\chi(\Sigma,\divam)>0$, \eqref{EulerchS} implies that, if \eqref{presregs} admits a solution, $K$ has to be positive somewhere. In \cite{Troy} it is proved that, if $0<4\pi \chi(\Sigma,\divam)<8\pi (1+\min_j\{\min{\{0,\alpha_j}\}\})$, then this necessary condition is also sufficient to obtain existence. In general, if $\chi(\Sigma,\divam)>0$, using \eqref{EulerchS}, it can be seen that \eqref{presregs} can be transformed into \eqref{eqdelta} with $\lambda = 4\pi \chi(\Sigma,\divam)$ and the extra term $\frac{4\pi\chi(\Sigma)}{|\Sigma|}-2K_g$ in the right hand side. Having $\frac{4\pi\chi(\Sigma)}{|\Sigma|}-2K_g$ zero mean value, this difference does not play any role and we will not comment on this issue any further.

\

Now we transform the equation \eqref{eqdelta} into another one which admits variational structure. Let $q \in \Sigma$ and $G(x,q)$ be the Green function of the Laplace-Beltrami operator on $\Sigma$ associated to $g$, i.e.
\beq\label{fgreen}
-\Delta_g G(x,q)=\dt_q-\frac{1}{|\Sigma|}\quad \mbox{in}\quad \Sigma,\qquad\quad\int_{\Sigma }G(x,q)dV_{g}(x)=0.
\eeq

Moreover, given $p_1,\ldots,p_m \in \Sigma$ and $\alpha_1,\ldots,\alpha_m \in (-1,+\infty)$ we define
\beq\label{accame}
h_m(x)=4\pi\sum\limits_{j=1}^m\al_jG(x,p_j)=-2\sum\limits_{j=1}^m\al_j\log dist(x,p_j)+h(x),
\eeq
where $h$ is the regular part of $h_m$.

The change of variable
$$
u= v + h_m
$$
transforms \eqref{eqdelta} into the equation
\beq\label{equation}
-\Delta_g u=\lm\left(\frac{\tilde K e^{u}}{\int_{\Sigma}\tilde K e^{u}dV_{g}}-\frac1{|\Sigma|}\right)\quad\mbox{in}\quad \Sigma,
\eeq
where 
\begin{equation}\label{tildeK}
\tilde K= K e^{-h_m}.
\end{equation}
Notice that since $G$ has the asymptotic behavior $G(x,p_j)\simeq -\frac{1}{2\pi}\log (\dist(x,p_j))$, then
$$
\tilde{K}(x) \simeq dist(p_j,x)^{2\alpha_j}e^{-h(x)} K(x) \quad \mbox{\quad close to $p_j$.}
$$

A possible strategy, meaningful also from the physical point of view, is to study \eqref{equation} for $\lambda$ positive independent on $\Sigma$ and $\divam$ and to deduce a posteriori the answer to the geometric question taking $\lambda=4\pi\chi(\Sigma,\divam)>0$.

Under the hypotheses $K>0$ and $\alpha_j>0$, Bartolucci and Tarantello, \cite{bt}, proved a concentration-compactness result which implies that blow-up can occur only if $\lambda$ belongs to the following discrete set of values
\beq\label{crit}
\Gamma(\underline{\alpha}_m)=\left\{8\pi r + \sum_{j=1}^{m}8\pi(1+\alpha_j)n_j\,|\,r\in \mathbb{N}\cup \{0\}, n_j\in\{0,1\}\right\}.
\eeq
Besides, they also proved an existence result for \eqref{equation} on surfaces with positive genus and $\lambda \in (8\pi,16\pi)\setminus \Gamma(\underline{\alpha}_m)$, generalized by Bartolucci, De Marchis and Malchiodi in \cite{BarDemMal}, obtaining solvability for any $\lambda\in (8\pi,+\infty) \setminus \Gamma(\underline{\alpha}_m)$.\\
 The case $K>0$ and $\alpha_j<0$ has been analyzed in \cite{Car1} and \cite{CarMal}.

Again, the problem in the sphere is more delicate. In the case $m=2$ and positive constant curvature, Troyanov, \cite{Troy2}, showed that \eqref{equation} admits solutions only if $\alpha_1=\alpha_2$ (see also \cite{BarMal}) and this implies (taking $\alpha_2=0$) that \eqref{equation} does not admit solutions for $m=1$ (see also \cite{BarLinT}). In other words, the \emph{tear drop} conical singularity on $\mathbb{S}^2$ does not admit constant curvature. Besides, for $m=2$, Chen and Li, \cite{ChenLiIneq}, gave necessary conditions on $K$ for the solvability. Eremenko in \cite{Erem} studied the case of prescribing constant positive curvature with three conical singularities. Without restrictions on the number of singularities,  Malchiodi and Ruiz, \cite{MalchiodiRuizSphere}, derived an existence result under some extra assumptions for $\lambda \in (8\pi,16\pi) \setminus \Gamma(\divam)$. We also refer the reader to \cite{Mon}, where the authors gave a criterion for the existence of a metric of constant curvature on $\mathbb{S}^2$. In a recent deep paper, \cite{ChenLin}, Chen and Lin computed the Leray-Schauder degree of \eqref{eqdelta} for $\lambda\notin \Gamma(\underline{\alpha}_m)$ recovering some of the previous existence results and deriving new ones in the case $\chi(\Sigma)>0$. Anyway on the sphere there are still different situations in which the degree is zero and the solvability is not known.

\

In this paper we consider the problem on the unit 2-sphere $\Sd$ endowed with the standard metric $g_0$, focusing on the case of $K$ sign-changing, which, as far as we know, has not been studied yet for general singular surfaces with an arbitrary number of conical singularities. This situation is admissible from the geometrical point of view, indeed, as already remarked, if $\chi(\Sd,\underline\alpha_m)>0$ Gauss-Bonnet only rules out the possibility that $K$ is non positive.

Thus from now on we will assume
\begin{enumerate}[label=(H1), ref=(H1)]
\item \label{H1} $K$ sign-changing, namely $K(x)K(y)<0$ for some $x,y \in \Sd$.
\end{enumerate}

Since problem \eqref{equation} has a variational structure, its solutions can be found as critical points of the energy functional
\beq\label{functional}
I_\lambda(u)=\frac12\int_{\Sd} |\nabla u|^2 dV_{g_0}+\frac\lambda{|\Sd|}\int_{\Sd} u \, dV_{g_0}-\lambda \log\int_{\Sd} \tilde K e^u dV_{g_0},
\eeq
defined in the domain
\beq\label{X}
X=\left\{u\in H^1(\Sd)\,|\,\int_{\Sd} \tilde K e^u\,dV_{g_0}>0\right\}.
\eeq
Notice that hypothesis \ref{H1} implies that $X$ is not empty.\\ Moreover the functional $I_\lambda$ is invariant under addition of constants, as well as problem \eqref{equation}.

Let $k\in\mathbb{N}\setminus\{0\}$, we study the case
$$
\lambda\in(8\pi k,8\pi(k+1)),
$$
for which the functional $I_\lambda$ is not bounded below and we will use a min-max scheme to find solutions of \eqref{equation}.
In this direction, we define the sets
$$
S^{\pm}=\{x\in \Sd \,|\,K(x) \gtrless 0\}, \qquad  S^0=\{x\in \Sd\,|\,K(x)=0\}, 
$$
and introduce the extra assumptions
\begin{enumerate}[label=(H2), ref=(H2)]
\item \label{H2} $K\in C^{2,\alpha}(V)$, for some neighborhood $V$ of $\partial S^+$, \\$\nabla K(x)\neq0$ for any $x\in \partial S^+$,
\end{enumerate}
\begin{enumerate}[label=(H3), ref=(H3)]
\item \label{H3} $p_j\notin \partial S^+$ \ for all $j\in\{1,\ldots,m\}$.
\end{enumerate}
Hypothesis \ref{H2} implies that the nodal line $\partial S^+ \subset S^0$ is regular, that
\beq\label{Npm}
N^{+}=\#\{\textnormal{connected components of $S^+$}\}<+\infty
\eeq 
and that $(S^0\cap \partial S^+)\setminus\partial S^-=\emptyset$, but it does not exclude that $S^0\setminus \partial S^+$ is non empty.\\
On the other hand by virtue of \ref{H3}, we can suppose, up to reordering, that there exists $\ell\in\{0,\ldots,m\}$ such that
\beq\label{ell}
p_j\in{S^+}\textrm{ for $j\in\{1\,\ldots,\ell\}$,}\quad p_j\in{S^- \cup (S^0\setminus \partial S^+)} \textrm{\; for $j\in\{\ell+1,\ldots,m\}$.} 
\eeq

As we will see the conical singularities located in $S^- \cup (S^0\setminus \partial S^+)$ do not play any role.

We state the fourth assumption on $K$
\begin{enumerate}[label=(H4), ref=(H4)]
\item \label{H4} $N^+>k$ or $S^+$ has a connected component which is non-simply connected.
\end{enumerate}

Notice that for some functions $K$ both hypotheses in \ref{H4} are fulfilled.

\

We are ready to enunciate our first existence result.
\begin{theorem}\label{thm:prescribing1}
Let $p_1,\ldots, p_m\in\Sd$, $\alpha_1,\ldots,\alpha_m>0$, $4\pi\chi(\Sd,\divam)\notin \Gamma(\diva_\ell)$, where $\ell$ is defined in \eqref{ell}. Then any function $K$ defined on $\Sd$ and satisfying \ref{H1}, \ref{H2}, \ref{H3} and \ref{H4} is the Gaussian curvature of at least one metric conformal to $g_0$ and having at $p_j$ a conical singularity with order $\alpha_j$.
\end{theorem}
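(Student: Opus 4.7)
The plan is to solve \eqref{equation} with $\lambda:=4\pi\chi(\Sd,\divam)$ by a Djadli--Malchiodi type min-max scheme, adapted to the sign-changing singular framework. Let $k\in\N$ be the unique integer with $\lambda\in(8\pi k,8\pi(k+1))$; this is well-defined since $\lambda\notin\Gamma(\diva_\ell)\supseteq 8\pi\N$. Because $I_\lambda$ is unbounded from below on $X$, a topological device is needed. The analytic core is a localized improved Moser--Trudinger inequality stating that, as long as the probability measure $\tilde K^+ e^u\big/\int_{\Sd}\tilde K^+ e^u\,dV_{g_0}$ remains at positive Kantorovich distance from every measure supported on at most $k$ points of $\overline{S^+}$, then $I_\lambda(u)\ge -C$. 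Hypotheses \ref{H2} and \ref{H3} are designed to make this effective: the nodal line is regular with non-degenerate normal derivative of $K$, so the contribution of a tubular neighborhood of $\partial S^+$ can be absorbed, and no conical singularity of $\tilde K$ lies on $\partial S^+$. It follows that for $L$ large the sublevel $\{I_\lambda\le -L\}$ continuously retracts onto the space $(\overline{S^+})_k$ of formal barycenters of order $k$ on $\overline{S^+}$. Using standard bubble profiles concentrated at $k$-tuples of points of $S^+\setminus\{p_{\ell+1},\dots,p_m\}$, one then produces a test map $\Phi\colon(\overline{S^+})_k\to\{I_\lambda\le -L\}$ whose composition with the retraction is homotopic to the identity.

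The second step is topological. Under \ref{H4} the barycenter space $(\overline{S^+})_k$ is not contractible: when $N^+>k$ this is the classical non-contractibility of $k$-barycenters of a space with more than $k$ connected components, while when some component of $S^+$ is non-simply-connected the conclusion follows from a non-vanishing cohomology class descending from the nontrivial $\pi_1$ of that component. A standard argument using the homotopy equivalence established above then shows that $\Phi$ does not extend continuously to the topological cone $C(\overline{S^+})_k$ with values in $\{I_\lambda\le -L\}$. Hence the min-max value
\[
c_\lambda\ :=\ \inf_{\hat\Phi}\ \max_{t\,\in\, C(\overline{S^+})_k}\ I_\lambda\bigl(\hat\Phi(t)\bigr),
\]
where the infimum is over continuous extensions $\hat\Phi\colon C(\overline{S^+})_k\to X$ of $\Phi$, satisfies $c_\lambda>-L$ and supplies a candidate critical level of $I_\lambda$.

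The last and most delicate ingredient is compactness. Struwe's monotonicity trick applied to the family $\lambda\mapsto I_\lambda/\lambda$ yields bounded Palais--Smale sequences at $c_\lambda$ for almost every $\lambda\in(8\pi k,8\pi(k+1))$. For such $\lambda$ one needs an adapted version of the Bartolucci--Tarantello concentration-compactness alternative: in the sign-changing case blow-up can only take place at points of $\overline{S^+}$, so the relevant quantization set reduces to $\Gamma(\diva_\ell)$ instead of $\Gamma(\divam)$. Together with $\lambda\notin\Gamma(\diva_\ell)$ this excludes blow-up and produces critical points $u_{\lambda_n}$ along a sequence $\lambda_n\to 4\pi\chi(\Sd,\divam)$; a final passage to the limit, again governed by the same quantization, delivers a solution of \eqref{equation} for $\lambda=4\pi\chi(\Sd,\divam)$, which through \eqref{tildeK} corresponds to the conformal metric prescribed by the theorem. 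The hardest part will be to establish the concentration-compactness alternative in the sign-changing setting and to localize it to $\overline{S^+}$, so that only the singularities with index in $\{1,\dots,\ell\}$ enter the quantization set, thereby justifying the role played by $\ell$ and by the condition $4\pi\chi(\Sd,\divam)\notin\Gamma(\diva_\ell)$ in the statement.
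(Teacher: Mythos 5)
Your plan follows the same overall strategy as the paper: improved Moser--Trudinger inequalities to describe the low sublevels by barycenters, a min-max over the topological cone, Struwe's monotonicity trick, and a sign-changing compactness/quantization theorem restricted to $\Gamma(\diva_\ell)$ to pass to $\lambda_0=4\pi\chi(\Sd,\divam)$. The one genuine difference is the topological model: you work with the full barycenter space $(\overline{S^+})_k$, whereas the paper deliberately replaces it by $(P^{N^+})_k$ (one marked point per component of $S^+$, when $N^+>k$) or by $\Gamma_k$ for a circle $\Gamma$ inside a non-simply-connected component. This is done precisely so that non-contractibility is explicit: $(P^{N^+})_k$ is the $(k-1)$-skeleton of a simplex, and $\Gamma_k\simeq(\mathbb{S}^1)_k$ has known nontrivial homology. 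Your assertion that non-contractibility of $(\overline{S^+})_k$ ``follows from a non-vanishing cohomology class descending from the nontrivial $\pi_1$'' is imprecise: for $k\geq 2$ the barycenter space of a circle is simply connected (it is homotopy equivalent to $\mathbb{S}^{2k-1}$), so the relevant class lives in degree $2k-1$; the clean way to conclude is to retract $(\overline{S^+})_k$ onto $(P^{N^+})_k$ or $\Gamma_k$ via a projection of $\overline{S^+}$ onto $P^{N^+}$ or $\Gamma$ -- which is exactly the paper's construction. Both models work; the paper's choice just makes the algebraic topology a citation rather than a claim.

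Two further points deserve sharpening. First, in the compactness step it is not enough to localize blow-up to $\overline{S^+}$: the quantization results you invoke (Y.Y.~Li's local estimate and Bartolucci--Tarantello's Theorem~6) require the weight to be positive near the concentration point, so blow-up must be excluded on the nodal line $\partial S^+$ itself. The paper achieves this through a Chen--Li type a priori upper bound for solutions in the whole region $\{K\leq\varepsilon\}$, obtained by stereographic projection to $\R^2$ and a reflection/moving-plane argument adapted to the decay $u\sim-\tfrac{\lambda}{2\pi}\log|y|$; this confines concentration to the compact set $\{K\geq\varepsilon\}\subset S^+$, away from both $\partial S^+$ and the singular points $p_{\ell+1},\dots,p_m$, and is why only $\diva_\ell$ enters the quantization set. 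You correctly flag this as the hardest ingredient but give no mechanism for it. Second, your test map should concentrate at points of a compact subset of $S^+\setminus\{p_1,\dots,p_\ell\}$ (not merely avoiding $p_{\ell+1},\dots,p_m$), since standard bubbles centered at a singular point $p_j\in S^+$ carry the anomalous mass $8\pi(1+\alpha_j)$ and the energy expansion of Lemma~\ref{notlimbl2} uses that $\tilde K$ is bounded away from zero on the concentration set. With these corrections your scheme coincides with the paper's proof of Theorem~\ref{thm:equation1}, from which Theorem~\ref{thm:prescribing1} follows by taking $\lambda_0=4\pi\chi(\Sd,\divam)$.
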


The previous result is a direct consequence of the following theorem.

\begin{theorem}\label{thm:equation1}
Let $p_1,\ldots, p_m\in\Sd$, $\alpha_1,\ldots,\alpha_m>0$ and let $K$ be a function on $\Sd$ satisfying \ref{H1}, \ref{H2}, \ref{H3} and \ref{H4}. Suppose that $\lambda_0\in(8\pi,+\infty)\setminus\Gamma(\diva_\ell)$, where $\ell$ is defined in \eqref{ell}, then \eqref{equation} admits a solution for $\lambda=\lambda_0$ with $(\Sigma,g)=(\Sd,g_0)$.
\end{theorem}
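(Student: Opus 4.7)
The plan is to prove Theorem~\ref{thm:equation1} via a min-max scheme for $I_\lambda$ on $X$, in the spirit of the Djadli-Malchiodi approach to the standard mean-field problem, adapted here to the sign-changing, singular setting of \eqref{equation}. The crucial first step is an improved Moser-Trudinger inequality of Chen-Li / Djadli-Malchiodi type: there exist $C>0$ and $\eps>0$ such that, for every $u\in X$ whose weighted positive measure
\[
\mu_u=\frac{\tilde K^+ e^u}{\int_{\Sd}\tilde K^+ e^u\,dV_{g_0}}\,dV_{g_0}
\]
is $\eps$-spread over at least $k+1$ disjoint subsets of $\overline{S^+}$, one has
\[
\log \int_{\Sd} \tilde K^+ e^u\,dV_{g_0}\le \frac{1}{16\pi(k+1)}\int_{\Sd}|\nabla u|^2\,dV_{g_0}+\frac{1}{|\Sd|}\int_{\Sd} u\,dV_{g_0}+C.
\]
Positivity of the $\alpha_j$'s only helps at singular points lying in $\overline{S^+}$, i.e.\ $j\le\ell$, while by \ref{H3} the remaining $p_j$'s sit at positive distance from $\overline{S^+}$ and contribute merely a bounded perturbation of $\tilde K$. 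Combined with $\lambda_0<8\pi(k+1)$ and the fact that on low sublevels $\{I_\lambda\le -L\}$ the quantity $\int\tilde K e^u$ is automatically positive and comparable to $\int\tilde K^+ e^u$, this implies that $u\in\{I_\lambda\le-L\}$ forces $\mu_u$ to be close (in the Kantorovich-Rubinstein distance) to the space of formal barycenters of order at most $k$ supported in $\overline{S^+}$,
\[
\Sigma_k(\overline{S^+})=\Bigl\{\textstyle\sum_{i=1}^{k}t_i\delta_{x_i}\ :\ x_i\in\overline{S^+},\ t_i\ge 0,\ \sum t_i=1\Bigr\}.
\]

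Next, standard estimates produce a continuous \emph{concentration} map $\Psi:\{I_\lambda\le-L\}\to\Sigma_k(\overline{S^+})$, together with a family of bubble-type test functions $\Phi_\eta:\Sigma_k(\overline{S^+})\to X$. The $\Phi_\eta$'s are obtained by gluing standard log-bubbles centred at the support points, truncated away from the regular nodal line $\partial S^+$ (available by \ref{H2}) and from the singularities $p_j$ with $j>\ell$ (by \ref{H3}); they satisfy $\sup_\sigma I_\lambda(\Phi_\eta(\sigma))\to-\infty$ as $\eta\to 0$ and $\Psi\circ\Phi_\eta\simeq \mathrm{id}_{\Sigma_k(\overline{S^+})}$. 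At this point \ref{H4} provides the decisive topological input: if $N^+>k$, or if $S^+$ has a non-simply-connected component, then $\Sigma_k(\overline{S^+})$ is non-contractible and carries a non-trivial homology class $[z]$ in some dimension $d_k$.

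Picking a cycle $z$ representing such a class and taking the topological cone of $\Phi_\eta(z)$ inside $X$ yields a min-max class whose inf-sup value $c_\lambda$ is finite: the upper bound comes from the cone, the lower bound from the non-triviality of $z$ composed with $\Psi$. Applying Struwe's monotonicity trick to the family $\lambda\mapsto I_\lambda/\lambda$, I would obtain actual critical points $u_n$ of $I_{\lambda_n}$ for a sequence $\lambda_n\to\lambda_0$. A Brezis-Merle / Bartolucci-Tarantello compactness analysis, extended to sign-changing $K$, then shows that any blow-up sequence must concentrate at points of $\overline{S^+}$ with total mass in $\Gamma(\diva_\ell)$ --- by \ref{H3} only the singularities with $j\le\ell$ can enter the mass-quantisation formula, whence the index $\ell$ rather than $m$. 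Since $\lambda_0\notin\Gamma(\diva_\ell)$, no blow-up occurs: up to additive constants and subsequences, $u_n\to u_0$ in $H^1(\Sd)$, and $u_0$ solves \eqref{equation} with $\lambda=\lambda_0$.

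The main obstacle is the improved Moser-Trudinger inequality itself. The classical Djadli-Malchiodi localisation via a partition of unity is designed for a non-negative integrand; here one must first show that on low sublevels $\int\tilde K^- e^u$ is dominated by $\int\tilde K^+ e^u$, and then establish the sharp local $8\pi(k+1)$-bound for the weighted integrand $\dist(\cdot,p_j)^{2\alpha_j}K$, carefully separating the contributions of singularities inside and outside $\overline{S^+}$ so that the obstruction set is $\Gamma(\diva_\ell)$ and not $\Gamma(\diva_m)$. A secondary technical point is to ensure that the test family $\Phi_\eta(\sigma)$ lies in $X$ uniformly in $\sigma\in\Sigma_k(\overline{S^+})$, which again hinges on the concentration of $\Phi_\eta$ inside $\overline{S^+}$ and on \ref{H2}-\ref{H3}.
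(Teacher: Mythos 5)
Your overall architecture (barycenter-type concentration map, bubble test functions, min-max over a cone, Struwe's monotonicity trick, compactness at $\lambda_0\notin\Gamma(\diva_\ell)$) is the same as the paper's, but two steps that you treat as routine are in fact where the real work lies, and one of them as written would fail.

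First, the parameter space for the test functions. You take the full barycenter set of $\overline{S^+}$ and glue standard log-bubbles at the support points, truncating ``away from $\partial S^+$ and from the singularities $p_j$ with $j>\ell$''. This has the roles of the singularities reversed: by \eqref{ell} the points $p_{\ell+1},\ldots,p_m$ lie in $S^-\cup(S^0\setminus\partial S^+)$, far from $\overline{S^+}$, and are harmless; the dangerous points are $p_1,\ldots,p_\ell\in S^+$, where $\tilde K(x)\simeq\dist(x,p_j)^{2\alpha_j}$ vanishes. A standard bubble centred at such a $p_j$ (or at a point of $\partial S^+$, where $\tilde K$ also vanishes) gives $\log\int_{\Sd}\tilde Ke^{\varphi_\mu}\,dV_{g_0}=-2\alpha_j\log\mu+O(1)$ instead of $O(1)$, and the estimate $I_\lambda(\varphi_{\mu,\sigma})\to-\infty$ breaks down (this is exactly why the threshold $8\pi(1+\alpha_j)$ and the set $J_\lambda$ appear in the $k=1$ analysis). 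Since your homotopy $\Psi\circ\Phi_\eta\simeq\mathrm{id}$ must hold on \emph{all} of $\Sigma_k(\overline{S^+})$, including measures supported on $\partial S^+$ or on $\{p_1,\ldots,p_\ell\}$, the scheme as stated does not close. The paper's fix is to retract the barycenter target onto $Y_k$ for a compact $Y\subset S^+\setminus\{p_1,\ldots,p_\ell\}$ --- a finite set $P^{N^+}$ with one point per component of $S^+$ when $N^+>k$, or a circle $\Gamma$ inside a non-simply connected component otherwise --- which is still non-contractible under \ref{H4} and on which $\tilde K$ is bounded away from zero. You would need this (or an equivalent) retraction; the non-contractibility of $\Sigma_k(\overline{S^+})$ alone is not enough, because you cannot map it \emph{into} low sublevels.

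Second, compactness. You describe it as ``a Brezis--Merle / Bartolucci--Tarantello analysis extended to sign-changing $K$'', but that extension is the main technical obstacle of the whole problem, not a footnote: the Bartolucci--Tarantello concentration-compactness result requires $K\ge0$, and for sign-changing $K$ nothing a priori prevents $e^{u_n}$ from blowing up in $S^-\cup S^0$, where no mass quantization is available and where $\int\tilde Ke^{u_n}$ could even degenerate. The paper's proof hinges on a separate a priori estimate (Theorem \ref{thm:ChenLinostro}): after normalizing $\int_{\Sd}\tilde Ke^{u}\,dV_{g_0}=1$, one has $u\le C$ on $\{K\le\eps\}$, proved by stereographic projection and a Kelvin-reflection/moving-plane argument of Chen--Li type, which is precisely where \ref{H2} (regularity of the nodal line and $\nabla K\neq0$ on $\partial S^+$) and \ref{H3} are used. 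Only with this bound in hand can one compare $I_\lambda$ with an auxiliary functional built from a non-negative weight, localize all concentration in $S^+\setminus\{K\le\eps\}$, and then invoke Y.Y.~Li's and Bartolucci--Tarantello's local quantization to land in $\Gamma(\diva_\ell)$. By contrast, the improved Moser--Trudinger inequality you single out as the main obstacle is comparatively cheap: since $\tilde K\le\max_{\Sd}(\tilde K)\,\chi_{S^+}$, the known inequality for bounded non-negative weights (Proposition \ref{prop:ChenLiIneq}) applies directly, and no two-sided comparability of $\int\tilde Ke^u$ with $\int\tilde K^+e^u$ is needed (indeed such comparability is not clear on low sublevels).
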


In case that $k=1$, i.e. $\lambda \in (8\pi,16\pi)$, and $\alpha_1,\ldots,\alpha_{\ell} \in (0,1]$ we can describe the topology of the sublevels of $I_{\lambda}$ in a more accurate way depending on the order of the singularities located in $S^+$. In particular, we set
\beq\label{Jlambda}
J_\lambda=\{p_j\in S^+\,|\,\lambda<8\pi(1+\alpha_j)\}
\eeq
and we introduce the fifth hypothesis
\begin{enumerate}[label=(H5), ref=(H5)]
\item \label{H5} $J_{\lambda_0}\neq\emptyset$.
\end{enumerate}

\begin{theorem}\label{thm:prescribing2}
Let $p_1,\ldots, p_m\in\Sd$, $\alpha_1,\ldots,\alpha_{\ell}\in(0,1]$, $\alpha_{\ell+1},\ldots,\alpha_m>0$ and let $\lambda_0=4\pi\chi(\Sd,\divam)\in(8\pi,16\pi)\setminus\Gamma(\diva_\ell)$, where $\ell$ is defined in \eqref{ell}. Then any function $K$ defined on $\Sd$ and satisfying \ref{H1}, \ref{H2}, \ref{H3} and \ref{H5} is the Gaussian curvature of at least one metric conformal to $g_0$ and having at $p_j$ a conical singularity with order $\alpha_j$.
\end{theorem}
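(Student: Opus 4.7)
The plan is to deduce Theorem~\ref{thm:prescribing2} from a PDE analogue of Theorem~\ref{thm:equation1} in which \ref{H4} is replaced by \ref{H5}. The geometric-to-analytic reduction is as in the introduction: set $u=v+h_m$, with $h_m$ as in \eqref{accame}, and take $\lambda_0=4\pi\chi(\Sd,\divam)\in(8\pi,16\pi)\setminus\Gamma(\diva_\ell)$; a critical point of the functional $I_{\lambda_0}$ in \eqref{functional} on $X$ then yields the desired conformal metric with the prescribed conical structure at the $p_j$'s.

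The heart of the proof is a variational min-max scheme for $I_{\lambda_0}$. Fix $p_{\bar\jmath}\in J_{\lambda_0}\subset S^+$. Since $\lambda_0<8\pi(1+\alpha_{\bar\jmath})$, a singular Moser--Trudinger inequality of Troyanov type rules out concentration of $\tilde K e^u$ at $p_{\bar\jmath}$ for sequences along which $I_{\lambda_0}\to-\infty$: the set of admissible concentration points is essentially $\overline{S^+}\setminus\{p_{\bar\jmath}\}$. Even when the component of $S^+$ containing $p_{\bar\jmath}$ is simply connected---the only configuration not already captured by \ref{H4}---its puncture at $p_{\bar\jmath}$ carries a non-trivial loop. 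Following the classical Djadli--Malchiodi strategy, coupling the improved Moser--Trudinger inequality with a concentration/projection argument yields, for $L$ sufficiently large, a retraction of the sublevel $\{I_{\lambda_0}\le -L\}$ onto a space of probability measures supported in $\overline{S^+}\setminus\{p_{\bar\jmath}\}$ inheriting this non-trivial $\pi_1$.

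The min-max class is then built by transplanting the standard bubbles $\fms(x)=2\log\frac{1+\mu^2}{1+\mu^2\, d_{g_0}(x,\sigma)^2}$ onto a $2$-disc $D\subset\overline{S^+}$ whose boundary realises the non-trivial loop. The asymptotic expansion of $I_{\lambda_0}(\fms)$ gives $I_{\lambda_0}(\fms)\to-\infty$ uniformly on $\partial D$ as $\mu\to+\infty$, while the topological obstruction forces any continuous extension to $D$ to reach a strictly larger value somewhere. A saddle argument then delivers a Palais--Smale sequence, whose compactness follows from the Bartolucci--Tarantello concentration-compactness principle: by \eqref{ell} only the singularities located in $S^+$ can support a blow-up, so the assumption $\lambda_0\notin\Gamma(\diva_\ell)$ rules it out. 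Passing to the limit gives a critical point of $I_{\lambda_0}$ and hence a solution of \eqref{equation}.

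I expect the main technical obstacle to be the uniform energy expansion of $I_{\lambda_0}(\fms)$ as $\sigma\to p_{\bar\jmath}$: the weight $\tilde K$ vanishes like $d_{g_0}(\cdot,p_{\bar\jmath})^{2\alpha_{\bar\jmath}}$ there, and the interplay between this decay and the concentration scale $\mu$ is precisely what keeps $I_{\lambda_0}$ bounded from below near $p_{\bar\jmath}$ while allowing it to plunge to $-\infty$ away from it. The restriction $\alpha_j\in(0,1]$ enters exactly at this point, providing the sharp separation between the saddle level and the maximum on $\partial D$, and making the linking argument go through.
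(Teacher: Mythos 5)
Your overall strategy lines up with the paper's: reduce to the PDE \eqref{equation} via the substitution $u=v+h_m$, then run a min-max scheme for $I_{\lambda_0}$ exploiting the nontrivial topology of $\overline{S^+}$ punctured at the points of $J_{\lambda_0}$ (this is exactly why the paper introduces the set $\Theta_\lambda$ in \eqref{Thetalambda}, and the argument hinges, as you note, on the hypothesis $\alpha_j\le1$ via Proposition~\ref{prop:betaJlambda}). However, there is a genuine gap in the compactness step. You close the argument by invoking ``the Bartolucci--Tarantello concentration-compactness principle,'' but that result requires $K$ to be nonnegative and is precisely what cannot be used here, since $K$ changes sign. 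The paper stresses this explicitly and, for this reason, has to prove its own compactness theorem (Theorem~\ref{thm:compactness}), built on the a priori upper bound in the region $\{K\le\varepsilon\}$ established via a stereographic projection and Chen--Li-type moving-plane estimates (Theorem~\ref{thm:ChenLinostro}). Deriving this bound, which forces blow-up to occur only inside $S^+$ and away from the nodal line, and then reducing the quantization of $\lambda_0$ to the set $\Gamma(\diva_\ell)$, is a central technical contribution of the paper and not something that can be cited from \cite{bt}. In the same breath, a ``saddle argument'' alone does not yield a convergent Palais--Smale sequence: since Palais--Smale is not known for $I_\lambda$, one needs Struwe's monotonicity trick to solve the problem for a.e.\ $\lambda$ near $\lambda_0$ and then pass to the limit via the compactness theorem.

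A secondary point: the standard bubbles $\fms$ (concentrating mass $8\pi$) are not the test functions the paper uses. When $\sigma$ lies near a singular point $p_j\in S^+\setminus J_{\lambda_0}$, the weight $\tilde K$ vanishes like $\dist(\cdot,p_j)^{2\alpha_j}$ and one can check that $I_{\lambda_0}(\fms)$ need not diverge to $-\infty$ for $\lambda_0$ close to $16\pi$. The paper instead uses the anisotropic profiles $\varphi_{\mu,p,\alpha}$ of \eqref{phimu2}, with $\alpha$ chosen in $(\tilde\alpha,\tfrac{\lambda}{8\pi}-1)$ so that the concentrated mass $8\pi(1+\alpha)$ dominates all the $8\pi(1+\alpha_j)$, $p_j\in S^+\setminus J_{\lambda_0}$, uniformly over $\Theta_{\lambda_0}$. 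You should adopt these modified bubbles, or at least argue the uniform lower-energy estimate on $\partial D$ more carefully.
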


Theorem~\ref{thm:prescribing2} can be deduced from the following result.
\begin{theorem}\label{thm:equation2}
Let $p_1,\ldots, p_m\in\Sd$, $\alpha_1,\ldots,\alpha_{\ell}\in(0,1]$, $\alpha_{\ell+1},\ldots,\alpha_m>0$ and let $K$ be a function on $\Sd$ satisfying \ref{H1}, \ref{H2}, \ref{H3} and \ref{H5}. Suppose that $\lambda_0\in(8\pi,16\pi)\setminus\Gamma(\diva_\ell)$, where $\ell$ is defined in \eqref{ell}, then \eqref{equation} admits a solution for $\lambda=\lambda_0$ with $(\Sigma,g)=(\Sd,g_0)$.
\end{theorem}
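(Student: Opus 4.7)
The plan is to use a min--max scheme exploiting the non-trivial topology of the low sublevels of $I_{\lm_0}$ induced by hypothesis \ref{H5}. First I would establish an improved Moser--Trudinger inequality for the sign-changing weight $\tilde K$ in the presence of conical singularities, showing that for $L$ large and $u\in X$ with $I_{\lm_0}(u)\le -L$ the normalized measure $\tilde K e^u\,dV_{g_0}/\int_\Sd \tilde K e^u\,dV_{g_0}$ is close (in duality with $C^{0}(\Sd)$) to a single Dirac mass at some $x_0\in\overline{S^+}$. The concentration set lies in $\overline{S^+}$ because $\tilde K\le 0$ elsewhere; a Brezis--Merle/Chen--Lin analysis, combined with $\lm_0<16\pi$, rules out multiple bubbles. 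Crucially, the sharp singular Moser--Trudinger inequality with exponent $8\pi(1+\al_j)$ at each $p_j\in S^+$ forces $x_0\notin J_{\lm_0}$, since concentration at $p_j\in J_{\lm_0}$ would require $\lm_0\ge 8\pi(1+\al_j)$, contradicting the definition of $J_{\lm_0}$. Taking a barycenter of the mass yields a continuous retraction $\Psi:\{I_{\lm_0}\le -L\}\to\overline{S^+}\setminus J_{\lm_0}$.

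Next I would construct a min--max level using this topology. By \ref{H5}, fix $p^*\in J_{\lm_0}$; by \ref{H3}, $p^*$ lies in the interior of $S^+$, so a small loop $\ga\subset S^+$ winding once around $p^*$ is non-contractible in $\overline{S^+}\setminus J_{\lm_0}$ but contractible in $\overline{S^+}$. For $x\in\ga$ and $r\in(0,r_0)$ consider standard bubble functions $\phi_{x,r}$ concentrating at $x$; using $\tilde K(x)>0$ and $x\notin\{p_j\}$, a direct computation gives $I_{\lm_0}(\phi_{x,r})\to -\i$ uniformly in $x\in\ga$ as $r\to 0$, while $\Psi(\phi_{x,r})$ stays close to $x$. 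Extending $x\mapsto \phi_{x,r}$ from $\ga$ to the bounding disk $D\subset\overline{S^+}$ by a continuous reference family $\eta_0$, set
\[
c_r=\inf_{\eta\in\F_r}\,\max_{\xi\in D}\, I_{\lm_0}(\eta(\xi)),\qquad \F_r=\{\eta\in C(D,X):\ \eta|_{\ga}=\phi_{\cdot,r}\}.
\]
The retraction $\Psi$ yields $c_r>-L$: any $\eta$ with image entirely in $\{I_{\lm_0}\le -L\}$ would make $\Psi\circ\eta$ a null-homotopy of $\ga$ inside $\overline{S^+}\setminus J_{\lm_0}$, contradicting the choice of $\ga$. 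Since $c_r<+\i$ by construction of $\eta_0$, $c_r$ is a valid min--max level.

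Since $I_{\lm_0}$ need not satisfy the Palais--Smale condition, I would apply Struwe's monotonicity trick: the a.e.\ differentiability of $\lm\mapsto c_r(\lm)/\lm$ near $\lm_0$ produces bounded Palais--Smale sequences, hence solutions $u_\lm$ of \eqref{equation} for a.e.\ $\lm$ near $\lm_0$. Letting $\lm\to\lm_0$, the Bartolucci--Tarantello concentration-compactness in its singular sign-changing version---observing that blow-up points lie in $\overline{S^+}$, so only the singularities $p_1,\ldots,p_\ell$ enter the quantized masses---gives sub-convergence unless $\lm_0\in\Gamma(\diva_\ell)$; the assumption $\lm_0\notin\Gamma(\diva_\ell)$ then delivers a solution at $\lm=\lm_0$.

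The hardest part is the first step: making the improved Moser--Trudinger inequality quantitative and uniform both near the nodal line $\partial S^+$ and at each singularity, so as to define the continuous retraction $\Psi$ with values in $\overline{S^+}\setminus J_{\lm_0}$. One must combine a localization inside $S^+$ (using the regularity \ref{H2} of $\partial S^+$ and \ref{H3} to keep the singular set away from $\partial S^+$) with the sharp singular constants at each $p_j\in J_{\lm_0}$ to forbid concentration precisely at those points; without this precise combination the linking argument in the second step collapses.
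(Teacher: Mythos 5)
Your variational and topological scheme is essentially the one used in the paper: a map from low sublevels into $\overline{S^+}$ built from the improved Moser--Trudinger machinery of Malchiodi--Ruiz (our Proposition~\ref{prop:beta}), the lower bound on $I_\lambda$ when the concentration point lies in $J_{\lambda_0}$ (our Proposition~\ref{prop:betaJlambda}), test functions concentrating at points of $S^+\setminus J_{\lambda_0}$, a min--max over the cone/disk bounded by a non-contractible loop around a point of $J_{\lambda_0}$, and Struwe's monotonicity trick. Your loop-and-disk formulation is homotopically the same as the cone over $\Theta_{\lambda_0}$ used in Section~\ref{sproof}, and your identification of the delicate point (uniformity of the improved inequality near $\partial S^+$ and at the singularities) is accurate.

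The genuine gap is in your final compactness step. There is no ``singular sign-changing version'' of the Bartolucci--Tarantello concentration--compactness theorem to invoke: that result requires $K\geq 0$, and extending it to sign-changing $K$ is one of the main points of the paper (Theorem~\ref{thm:compactness}). In particular, your parenthetical claim that ``blow-up points lie in $\overline{S^+}$'' is precisely what must be proved, and it does not follow from soft arguments: a priori the solutions $u_\lambda$ produced by Struwe's trick could blow up on the nodal line $\partial S^+$ (where the standard quantization analysis degenerates because $K$ vanishes and changes sign) or even in $S^-$, and one must also exclude blow-up at the singular points $p_{\ell+1},\ldots,p_m$ lying in $S^-\cup(S^0\setminus\partial S^+)$ so that only $\Gamma(\diva_\ell)$, and not $\Gamma(\divam)$, appears. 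The paper handles this by first establishing a uniform upper bound $u\leq C$ on the region $\{K\leq\varepsilon\}$ for normalized solutions (Theorem~\ref{thm:ChenLinostro}), obtained via stereographic projection and a nontrivial generalization of the Chen--Li a priori estimates to solutions with the asymptotics $u\sim-\tfrac{\lambda}{2\pi}\log|x|$ for $\lambda>8\pi$ (Lemma~\ref{lem21} and Theorem~\ref{thm:ChenLi}), together with a comparison-functional argument with a nonnegative weight $\tilde H$; this is where hypotheses \ref{H2} and \ref{H3} are used in an essential way, and where the restriction to $(\Sigma,g)=(\Sd,g_0)$ enters. Only after blow-up has been confined to $S^+\setminus\{K\leq\varepsilon\}$ can Y.Y.~Li's local quantization and Bartolucci--Tarantello's Theorem~6 be applied to conclude $\lambda_0\in\Gamma(\diva_\ell)$. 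Without supplying this a priori estimate, your argument does not close.
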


Proofs of Theorem ~\ref{thm:equation1} and Theorem~\ref{thm:equation2} are based on a minmax argument, relying in turn on the study of the low  sublevels of the functional $I_\lambda$, in the spirit of \cite{BarDemMal,Djad,DjadliMalchiodi,MalchiodiRuizSphere}.

Once $\lambda_0\in(8\pi k,8\pi(k+1))$, for some $k\in\N$, is fixed, the strategy is to find a compact non contractible topological space $\mathcal C$ and two maps $i:\mathcal C\to\{I_\lambda\leq-L\}$, $\beta:\{I_\lambda\leq-L\}\to\mathcal C$ for $L>0$ sufficiently large, such that $\beta\circ i$ is homotopically equivalent to the identity map on $\mathcal C$.\\ This immediately implies that $i(\mathcal C)$ is non contractible.  

Then we consider the class $\mathcal G_\lambda$ of continuous maps $g$ from the topological cone $\tilde{\mathcal C}$ over $\mathcal C$ into $H^1(\Sd)$, which coincide with $i$ on the boundary of the cone, i.e. $g_{|\mathcal C}=i_{|\mathcal C}$, and we define a min-max level $\displaystyle{c_\lambda=\inf_{g\in\mathcal{G}_\lambda}\sup_{z\in\tilde{\mathcal{C}}}I_{\lambda}(g(z))}$. The noncontractibility of $\mathcal C$ allows to prove that $c_\lambda>-\infty$ and to find a Palais-Smale sequence for $I_\lambda$. In turn by the Struwe monotonicity trick we derive the existence of a solution $u_\lambda$ of \eqref{equation} for almost every $\lambda$ close to $\lambda_0$.

At this point to find a solution of \eqref{equation} with $\lambda=\lambda_0$ we need a compactness result and this yields an extra difficulty because the one by Bartolucci and Tarantello can not be applied, requiring $K$ positive. For this reason we prove an alternative compactness theorem (see Theorem \ref{thm:compactness}) to exclude that blow-up can occur if $\lambda\notin \Gamma(\diva_\ell)$. To derive this result we follow the approach employed in \cite{ChenLiDCDS} to treat the \emph{regular} Nirenberg problem, combined with an energy comparison argument. To do so, in particular to get a priori bounds for solutions in $\Sd\setminus S^+$, we assume hypotheses \ref{H2} and \ref{H3}.

For Theorem \ref{thm:equation1} we take as $\mathcal{C}$ the set of formal barycenters of order $k$ of a proper compact subset $Y$ of $S^+\setminus\{p_1,\ldots,p_m\}$, namely the family of unit measures which are supported in at most $k$ points of $S^+$. Notice that $\mathcal C$ will turn out to be non contractible in view of assumption \ref{H4}. The main underlying idea is that, if $\lambda\in(8\pi k,8\pi(k+1))$ and if $I_\lambda$ attains large negative values, the measure $\tfrac{e^u\chi_{S^+}}{\int_{S^+}e^u\,dV_g}$ has to concentrate near at most $k$ points of $S^+$. Next, we construct a global projection of $S^+$ onto $Y$ and in turn this map induces a projection from the barycenters of $S^+$ onto those of $Y$. Exactly in this way we define the map $\beta$, while for what concerns $i$ we use suitable test functions to embed $\mathcal C$ into $\{I_\lambda\leq -L\}$.

In the case of Theorem \ref{thm:equation2}, we take as $\mathcal C$ a compact subset of $S^+\setminus J_\lambda$ and to construct the map $\beta$ we study the concentration properties of the measures $\tfrac{\tilde K e^u\chi_{S^+}}{\int_{S^+}\tilde K e^u\,dV_g}$ for $u\in\{I_\lambda\leq -L\}$, adapting the arguments in \cite{MalchiodiRuizSphere}.

\begin{remark}
Theorem~\ref{thm:equation1} and Theorem~\ref{thm:equation2} can be seen as the counterparts for $K$ sign-changing of the existence results obtained in \cite{BarDemMal} and \cite{MalchiodiRuizSphere} for $K$ positive, where $S^+$ plays the role of $\Sigma$. We point out that, whereas the minmax scheme we developped to treat the sign-changing case works with some modifications on any surface endowed with any metric (see Remark \ref{remnons} and Remark \ref{remnons2}), in the proof of the compactness result we strongly use the fact that $(\Sigma,g)=(\Sd,g_0)$ in order to apply the stereographic projection.
\end{remark}
\

Finally, we prove that it is not always possible to prescribe on a singular standard sphere a sign-changing Gaussian curvature satisfying \ref{H1}, \ref{H2} and \ref{H3} when neither \ref{H4} nor \ref{H5} are fulfilled. Therefore we can say that these assumptions are somehow sharp.

\begin{theorem}\label{thm:geometricnonexistence}
Let $p\in\Sd$ and $\alpha>0$ then there exists a class of axially symmetric functions on $\Sd$ satisfying \ref{H1}, \ref{H2} and \ref{H3}, which are not the Gaussian curvature of any metric conformal to $g_0$ having at $p$ a conical singularity of order $\alpha$.
\end{theorem}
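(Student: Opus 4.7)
The strategy is a Kazdan--Warner obstruction tailored to the singular setting, exploiting that the conformal Killing vector field $X=\nabla_{g_0}x_3$ on $\Sd$ vanishes at the singular point. Place $p$ at the north pole (without loss of generality) and consider the class of axially symmetric functions $K=F(x_3)$ with $F\in C^{2,\alpha}([-1,1])$, $F(1)<0$, $F(-1)>0$, $F'<0$ strictly on $(-1,1)$ and a unique simple zero $s_0\in(-1,1)$. Any such $K$ fulfils \ref{H1}--\ref{H3}; moreover $S^+=\{F(x_3)>0\}=\{x_3<s_0\}$ is a single simply-connected spherical cap not containing $p$, so $N^+=1$ together with $S^+$ simply connected makes \ref{H4} fail, and $p\notin S^+$ forces $J_\lambda=\emptyset$, so \ref{H5} fails as well. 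This yields the required class.

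Assume, for contradiction, that $K$ is the Gauss curvature of some $\tilde g=e^v g_0$ with a conical singularity of order $\alpha$ at $p$, i.e.\ $v$ solves
\[
-\Delta_{g_0} v=2Ke^v-2-4\pi\alpha\,\delta_p \quad\text{on } \Sd,
\]
with asymptotics $v(x)=2\alpha\log\dist(x,p)+O(1)$ near $p$. The heart of the proof is the Kazdan--Warner-type identity
\[
\int_{\Sd}\langle \nabla K,\nabla x_3\rangle_{g_0}\,e^v\,dV_{g_0}=2\pi\alpha(2+\alpha).
\]

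To establish it, I would proceed in two steps. First, testing the equation against the smooth function $x_3$ and using $\Delta_{g_0} x_3=-2x_3$, $\int_{\Sd}x_3\,dV_{g_0}=0$ and $x_3(p)=1$ yields $\int_{\Sd} Ke^v x_3\,dV_{g_0}=\int_{\Sd} v x_3\,dV_{g_0}+2\pi\alpha$. Second (Pohozaev step), multiply the equation by $X(v)=\nabla x_3\cdot\nabla v$, which is bounded on $\Sd$ with $X(v)(p)=-2\alpha$ because $X(p)=0$ compensates the logarithmic singularity of $v$. Integrate over $\Omega_\epsilon=\Sd\setminus B_\epsilon(p)$, use the conformal Killing identity $\nabla v\cdot \nabla X(v)=\tfrac{1}{2} X(|\nabla v|^2)-x_3|\nabla v|^2$ to reduce the interior contribution to a boundary integral on $\partial B_\epsilon(p)$, and evaluate the leading asymptotics $\partial_r v\sim 2\alpha/\epsilon$, $|\nabla v|^2\sim 4\alpha^2/\epsilon^2$, $\partial_\nu x_3\sim\epsilon$. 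In the limit $\epsilon\to 0$ the boundary terms contribute the finite residues $4\pi\alpha^2$ and $8\pi\alpha^2$, whose combination gives $\int_{\Sd} Ke^v X(v)\,dV_{g_0}=2\int_{\Sd} v x_3\,dV_{g_0}-2\pi\alpha^2$. Combining the two steps with the elementary identity $\int\langle \nabla K,\nabla x_3\rangle e^v\,dV_{g_0}=2\int Ke^v x_3\,dV_{g_0}-\int Ke^v X(v)\,dV_{g_0}$ (obtained by integrating by parts $\nabla(Ke^v)\cdot\nabla x_3$) delivers the claimed identity.

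For our $K=F(x_3)$ with $F'<0$, $\langle \nabla K,\nabla x_3\rangle_{g_0}=F'(x_3)|\nabla x_3|^2_{g_0}=F'(x_3)\sin^2\theta<0$ almost everywhere on $\Sd$, so the left-hand side of the identity is strictly negative while the right-hand side $2\pi\alpha(2+\alpha)$ is strictly positive for every $\alpha>0$. This contradiction rules out the existence of such $v$ and proves the theorem. The main obstacle is the rigorous execution of the Pohozaev step: one has to control the sub-leading terms in the expansions of $\nabla v$ and $X(v)$ near $p$ and show that only the explicit residues survive as $\epsilon\to 0$. The decisive structural feature that makes the whole scheme work is $X(p)=0$, which keeps $X(v)$ bounded up to the singularity and, geometrically, expresses that the conformal flow generated by $X$ fixes the conical point, so that perturbing $K$ along this flow does not move the singular measure.
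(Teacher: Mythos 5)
Your proposal is correct, but it follows a genuinely different route from the paper. The paper deduces Theorem~\ref{thm:geometricnonexistence} from Theorem~\ref{nonexistence}: it builds, for each $F$ in the class $\mathcal F_p$ of \eqref{setRP}, a curvature candidate $K_F=Fe^{h}g_\lambda(P(\cdot))$ as in \eqref{KR} so that after stereographic projection from $p$ the full nonlinearity $\hat K_{F,\lambda}$ becomes a radial function satisfying the monotonicity condition \eqref{NE}, and then invokes the Chen--Lin type comparison/moving-plane argument (Lemma~\ref{lemnon} and Theorem~\ref{nethm:ChenLi}), which compares $u(\mu x)$ with its Kelvin reflection and lets $\mu\to0$. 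You instead prove a singular Kazdan--Warner/Pohozaev identity for the conformal field $X=\nabla x_3$ vanishing at $p$; I have checked your residue computation and the constant is right: with $p$ the north pole and the normalization $-\Delta_{g_0}v=2Ke^v-2-4\pi\alpha\delta_p$, $v=2\alpha\log r+O(1)$, one indeed gets $\int_{\Sd}\langle\nabla K,\nabla x_3\rangle e^v\,dV_{g_0}=4\pi\alpha+2\pi\alpha^2=2\pi\alpha(2+\alpha)$, the two residues coming from $-\oint_{\partial B_\epsilon}x_3\,\partial_\nu v$ and from $\tfrac12\oint|\nabla v|^2X\cdot\nu-\oint X(v)\partial_\nu v$. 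The sign bookkeeping is consistent with your placement of $p$ at the maximum of $x_3$ (so $K(p)<0$, $F'<0$), and the identity correctly reduces to the teardrop obstruction when $K$ is constant. The trade-offs: your argument is more classical and self-contained (no moving planes, no construction of the auxiliary factor $e^hg_\lambda$), and it is manifestly independent of $\lambda$; on the other hand it needs $K=F(x_3)$ globally monotone so that the integrand $F'(x_3)|\nabla x_3|^2e^v$ has a fixed sign on all of $\Sd$, whereas the paper's class $\mathcal F_p$ only imposes monotonicity on the positive nodal region (the two classes are therefore different, though both legitimately witness the theorem, which only asserts existence of \emph{a} class). The one point you must still write out carefully — and you flag it yourself — is the $C^1$ expansion $v=2\alpha\log r+v_0$ with $v_0\in C^{1}$ near $p$, needed to justify $\partial_\nu v=-2\alpha/\epsilon+O(1)$ and $X(v)\to-2\alpha$ in the boundary residues; this follows from the paper's definition of a conical singularity ($\tilde g=|z|^{2\alpha}e^w|dz|^2$ with $w$ smooth) together with standard local elliptic regularity, since $Ke^v\sim K(p)r^{2\alpha}e^{v_0}$ stays bounded for $\alpha>0$.
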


Theorem \ref{thm:geometricnonexistence} follows from next result, which is inspired by \cite{ChenLins}. Before stating it, for $p\in \Sd$, we introduce the set $\mathcal F_p \subset C^0(\Sd)$ defined as
\beq\label{setRP}
\mathcal F_p=\left\{F\in C^0(\Sd)\,:\,\begin{array}{l}\text{\small $F$ is sign-changing, rotationally symmetric}\\ \text{\small with respect to $p$, monotone in the region}\\\text{\small where it is positive and $F(-p)=\displaystyle{\max_{\Sd} F}$}\end{array}\right\}.
\eeq

\begin{theorem}\label{nonexistence}
Let $\lambda\in(8\pi,+\infty)$, $p\in \Sd$ and $\alpha>0$. Then for any function $F\in \mathcal F_p$ there exists a function $K_F$, having the same nodal regions of $F$, such that \ref{H1}, \ref{H2} and \ref{H3} are fulfilled but equation \eqref{equation} (with $m=1$, $p_1=p$, $\alpha_1=\alpha$ and $\tilde K\equiv\tilde{K}_F=e^{-h_1}K_F$) does not admit a solution.
\end{theorem}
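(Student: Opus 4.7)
The plan is to adapt the Kazdan--Warner/Pohozaev obstruction of \cite{ChenLins} to the present singular Nirenberg problem on $\Sd$. Fix $F\in\mathcal{F}_p$ and, after a rotation, take $p$ as the north pole and write $F=F(\theta)$ with $\theta\in[0,\pi]$ the geodesic distance from $p$; the hypotheses defining $\mathcal{F}_p$ then read $F<0$ on $[0,\theta_0)$, $F>0$ and monotone on $(\theta_0,\pi]$, and $F(\pi)=\max_{\Sd}F$. I would construct $K_F$ by replacing $F$ on a thin annular neighbourhood of the nodal circle $\{\theta=\theta_0\}$ by a $C^{2,\alpha}$ axially symmetric function with $K_F'(\theta_0)\neq 0$. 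In this way $K_F$ has the same nodal regions as $F$, inherits axial symmetry, strict monotonicity on $\{K_F>0\}$ and the property $K_F(-p)=\max K_F$, and satisfies \ref{H1}, \ref{H2}, \ref{H3}.

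Assume by contradiction that $u$ solves \eqref{equation} with $\tilde K=\tilde K_F$. The central step is a Pohozaev-type identity tailored to the singularity at $p$. Via the stereographic projection $\Pi:\Sd\setminus\{p\}\to\mathbb{R}^2$ (which sends $-p$ to the origin) and the change $v(y)=u(\Pi^{-1}(y))+2\log\tfrac{2}{1+|y|^2}$, equation \eqref{equation} becomes
\[
-\Delta v=\widehat C\,\kappa(|y|)\,|y|^{2\alpha}\,e^{v}\quad\text{in }\mathbb{R}^2,\qquad v(y)+(2+2\alpha)\log|y|=O(1)\text{ as }|y|\to+\infty,
\]
where $\kappa$ is the radial pullback of $K_F$, with $\kappa(0)=\max\kappa>0$, $\kappa$ monotone on the interval $(0,r_0)$ on which $\kappa>0$, $\kappa<0$ for $r>r_0$, and $\widehat C>0$ absorbs $\lambda$ and $\int\tilde K_F e^u\,dV_{g_0}$. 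Multiplying this reduced equation by $y\cdot\nabla v$, integrating on $B_R$, performing the integration by parts in which $\kappa(|y|)|y|^{2\alpha}$ is differentiated, and letting $R\to+\infty$, the prescribed logarithmic asymptotics of $v$ at infinity enable one to evaluate the limiting boundary contributions explicitly, producing a relation of the form
\[
\widehat C\int_{\mathbb{R}^2} r\,\kappa'(r)\,|y|^{2\alpha}\,e^{v}\,dy=\Psi(\alpha,\lambda),
\]
with $\Psi(\alpha,\lambda)$ an explicit constant (depending also on the asymptotic values of $\kappa$). Since $\kappa'(r)\le 0$ on $(0,r_0)$ and can be taken strictly negative on a large sub-interval by sharpening the monotonicity of $F$ inside the class $\mathcal{F}_p$, while the tail contribution on $(r_0,+\infty)$ has controlled sign thanks to $\kappa\to\kappa(\pi-)<0$ at infinity, the left-hand side can be made to carry a strict sign opposite to $\Psi(\alpha,\lambda)$, yielding the sought contradiction.

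The main obstacle is precisely this third step: executing the Pohozaev calculation with care for the singular weight $|y|^{2\alpha}$ at the origin and for the logarithmic divergence of $v$ at infinity, and pinning down the explicit form and the sign of $\Psi(\alpha,\lambda)$. A further ingredient that may be required is a preliminary moving-plane argument in the spirit of \cite{ChenLins,ChenLiAnn}, showing that every solution of the reduced equation on $\mathbb{R}^2$ is radially symmetric about the origin; the assumption $F(-p)=\max F$ is exactly what makes such an argument succeed, since it places the maximum of $\kappa$ at the centre of the moving planes and simultaneously fixes the sign of $\kappa'$ in the integral identity. Once both ingredients are in place, the class of axially symmetric $K_F$ produced above proves Theorem~\ref{nonexistence}, and hence Theorem~\ref{thm:geometricnonexistence} as well.
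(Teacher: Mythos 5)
Your overall strategy (reduce to $\R^2$ by stereographic projection from $p$ and exploit an obstruction for radial coefficients) matches the paper's, but the obstruction you invoke is different and, as written, your argument has a genuine gap at its decisive step. The paper does not use a Pohozaev/Kazdan--Warner identity at all: it applies Theorem~\ref{nethm:ChenLi}, already established in Section~\ref{prenot} via the Kelvin-transform comparison of Lemma~\ref{lemnon} (the Chen--Lin moving-spheres argument), which requires only that the reduced coefficient be bounded, radial, positive and non-increasing for $r<r_0$ and negative for $r>r_0$; no radial symmetry of the solution and no computation of boundary terms is needed. Your Pohozaev route leaves open exactly the decisive point: you acknowledge that the constant $\Psi(\alpha,\lambda)$ and its sign are not computed, and, more seriously, the sign of the left-hand side cannot be controlled either, because the class $\mathcal F_p$ imposes \emph{no} monotonicity on $F$ in the region where it is negative, so the contribution of $\int_{r>r_0} r\,\kappa'(r)\,|y|^{2\alpha}e^{v}\,dy$ has no definite sign (that $\kappa$ tends to a negative limit says nothing about $\kappa'$ there, and $e^{v}$ is merely bounded, not negligible, on that region). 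Without closing both points there is no contradiction.

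There are also two concrete defects in the reduction and in the construction of $K_F$. First, for $\lambda\neq 8\pi$ the conformal shift must be $\frac{\lambda}{8\pi}\log\frac{4}{(1+|y|^2)^2}$ as in \eqref{cvar}; with your shift $2\log\frac{2}{1+|y|^2}$ the term coming from $-\lambda/|\Sd|$ is not cancelled and the equation does not take the autonomous form $-\Delta v=Re^{v}$, and the correct asymptotics are $v\sim-\frac{\lambda}{2\pi}\log|y|$, not $-(2+2\alpha)\log|y|$. Second, and more importantly, the correct reduction produces the weight $f_\lambda(y)=\lambda\bigl(\tfrac{4}{(1+|y|^2)^2}\bigr)^{1-\lambda/(8\pi)}$, which for $\lambda>8\pi$ is \emph{increasing and unbounded} in $|y|$; if $K_F$ is merely a smoothing of $F$ near the nodal circle, the resulting radial coefficient is neither bounded nor non-increasing where it is positive, so the monotonicity you assert for $\kappa$ fails. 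This is precisely why the paper builds the compensating factors into the definition \eqref{KR}, taking $K_F=F\,e^{h}\,(g_\lambda\circ P)$, so that the reduced coefficient equals $\lambda F(P^{-1}(y))\,\varphi(P^{-1}(y))^{2\alpha}$, which is bounded and satisfies \eqref{NE}. With that corrected construction the problem falls directly into the scope of Theorem~\ref{nethm:ChenLi}, and the Pohozaev computation becomes unnecessary.
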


As it will be clear from the definition of $K_F$, see \eqref{KR}, due to our assumptions on $F$, for any function $K_F$ in the statement of the previous theorem $p\in S^-$ (then $J_\lambda=\emptyset$) and $S^+$ is contractible. In particular $N^+=1$.

\

The rest of the paper is organized as follows. In Section~\ref{prenot} we fix the notation and give some preliminary results. In section~\ref{Scom} we find a priori bounds for solutions of \eqref{equation} and we prove a compactness result. In Section~\ref{low} we study the low sublevels of $I_{\lambda}$ and finally Section~\ref{sproof} is devoted
to prove Theorem~\ref{thm:equation1}, Theorem~\ref{thm:equation2} and Theorem~\ref{nonexistence}.

\

\section{Notations and preliminaries}\label{prenot}
\setcounter{equation}{0}
In this section we fix the notation used in this paper and collect some preliminary known results.

From now on $(\Sd,g_0)$ will be the unit 2-sphere equipped with the standard metric, $\dist(x,y)$ will denote the distance between two points $x, y \in \Sd$ induced by the ambient metric and $\dist(\O_1,\O_2)=\min\left\{ \dist(x,y) | x \in \Omega_1, y \in \Omega_2 \right\}$ will denote the distance between two subsets of $\Sd$. \\
Given $0<r<R$, $p\in \Sd$ and $\Omega \subset \Sd$, the symbol $B_p(r)$ stands for the open ball of radius $r$ and center $p$, $A_p(r,R)$ denotes the corresponding open annulus and $(\Omega)^r=\{x\in \Sd,|\,\dist(x,\Omega)<r\}$. \\
Let $f \in L^1(\Sd)$, we set $\dashint_{\Sd} f=\frac1{|\Sd|} \int_{\Sd} f$, where $|\Sd|$ is the area of $\Sd$. We will use the same notation for a subset $T$ of $\R^2$, namely $(T)^r=\{x\in \R^2\,|\,\dist(x,T)<r\}$.\\
For a real number $a$, we introduce the following notation for the sublevels of the energy functional
$$
I_{\lambda}^{a}= \{u\in X\,:\,I_\lambda(u)\leq a\},
$$
where $I_{\lambda}$ and $X$ are defined in \eqref{functional} and \eqref{X} respectively. \\
For any $A \subset \Sd$, $\overline{A}$ will denote its closure, $\interior(A)$ its interior and $\chi_A(x)$ the characteristic function of $A$. Moreover, for $A, B\subset \Sd$, $A\vartriangle B$ stands for their symmetric difference. 

Given a metric space $M$ and $k\in \mathbb{N}$, we denote by $M_k$ the set of formal barycenters of order $k$ on $M$, namely the following family of unit measures supported in at most $k$ points
\beq\label{formbar}
M_k=\left\{\sum_{i=1}^k t_i \delta_{x_i}, \sum_{i=1}^k t_i =1 , x_i \in M \right\}.
\eeq

Positive constants are denoted by $C$, and the value of $C$ is allowed to vary from formula to formula.

\

\subsection{Moser-Trudinger inequalities}
\

Moser-Trudinger type inequalities are a powerful tool in our study because they allow to deduce properties of the functional $I_\lambda$ defined in \eqref{functional}. We start recalling a weaker version of the classical Moser-Trudinger inequality, see \cite{Mos1}.
\begin{proposition}\label{prop:MT}
Let $\Sigma$ be a compact surface, then there exists $C>0$ such that
\beq
\log\int_{\Sigma}e^{u}dV_g \leq\frac{1}{16\pi} \int_\Sigma |\nabla u|^2\ dV_g\,+\frac1{|\Sigma|}\int_\Sigma u dV_g+C, \qquad\forall\,u\in H^1(\Sigma).
\eeq
\end{proposition}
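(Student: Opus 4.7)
The plan is to reduce to the zero-mean case and then combine the classical quadratic-exponential Moser inequality on compact surfaces with a Young-type interpolation.

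First I would normalize by writing $u=\varphi+\bar u$ where $\bar u=\frac{1}{|\Sigma|}\int_\Sigma u\,dV_g$ and $\varphi=u-\bar u$. Then $\int_\Sigma \varphi\,dV_g=0$, $\|\nabla\varphi\|_{L^2}=\|\nabla u\|_{L^2}$, and
\[
\log\int_\Sigma e^u\,dV_g \;=\; \bar u + \log\int_\Sigma e^\varphi\,dV_g,
\]
so the claim reduces to proving
\[
\log\int_\Sigma e^\varphi\,dV_g \;\leq\; \frac{1}{16\pi}\int_\Sigma|\nabla\varphi|^2\,dV_g + C
\]
for every $\varphi\in H^1(\Sigma)$ with zero mean.

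The main step would be to invoke Moser's inequality in its quadratic-exponential form (the ``classical Moser--Trudinger inequality'' referred to in the statement): there exists $C=C(\Sigma,g)$ such that
\[
\int_\Sigma \exp\!\left(\frac{4\pi\,\varphi^2}{\|\nabla\varphi\|_{L^2}^2}\right) dV_g \;\leq\; C
\]
for every mean-zero $\varphi\in H^1(\Sigma)$. On a compact surface this follows by covering $\Sigma$ with finitely many normal coordinate charts, inserting a subordinate partition of unity, and applying Moser's original Euclidean inequality chart by chart; the conformal factor of $g$ contributes only bounded multiplicative corrections.

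The conclusion then comes from Young's inequality: for every $\epsilon>0$ one has $\varphi \leq \varphi^{2}/(2\epsilon^{2}) + \epsilon^{2}/2$, and choosing $\epsilon^{2}=\|\nabla\varphi\|_{L^2}^2/(8\pi)$ produces
\[
e^\varphi \;\leq\; \exp\!\left(\frac{4\pi\,\varphi^2}{\|\nabla\varphi\|_{L^2}^2}\right)\exp\!\left(\frac{\|\nabla\varphi\|_{L^2}^2}{16\pi}\right).
\]
Integrating over $\Sigma$ and combining with the preceding bound yields $\int_\Sigma e^\varphi\,dV_g \leq C\exp(\|\nabla\varphi\|_{L^2}^2/(16\pi))$; taking logarithms and reinstating $\bar u$ gives the stated inequality. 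The only genuinely non-trivial ingredient is the quadratic Moser inequality itself, but since the proposition asks only for the existence of some finite constant $C=C(\Sigma,g)$ (no sharp geometric dependence), the remaining analysis is entirely routine.
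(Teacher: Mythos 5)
The paper itself gives no proof of this proposition: it is quoted as a classical result and attributed to Moser's paper \cite{Mos1}, so there is nothing to compare step by step. Your derivation of the ``linear'' form from the ``quadratic'' form is the standard one and is correct as written: splitting off the mean, noting the identity $\log\int_\Sigma e^u = \bar u+\log\int_\Sigma e^{\varphi}$, and then using
$\varphi\le \frac{4\pi\varphi^2}{\|\nabla\varphi\|_{L^2}^2}+\frac{\|\nabla\varphi\|_{L^2}^2}{16\pi}$
(your Young inequality with $\epsilon^2=\|\nabla\varphi\|_{L^2}^2/(8\pi)$; the degenerate case $\nabla\varphi\equiv 0$ is trivial) gives exactly the stated bound with the constant $\tfrac{1}{16\pi}$ in front of the Dirichlet energy.

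The one soft spot is your justification of the quadratic inequality $\int_\Sigma\exp\bigl(4\pi\varphi^2/\|\nabla\varphi\|_{L^2}^2\bigr)\,dV_g\le C$ on a general compact surface by ``covering with charts, inserting a partition of unity, and applying Moser's Euclidean inequality chart by chart.'' That patching argument does not obviously preserve the constant $4\pi$: on the set where a cutoff $\rho_i$ equals $1$ one must compare $\exp\bigl(4\pi\varphi^2/\|\nabla\varphi\|_{L^2}^2\bigr)$ with $\exp\bigl(4\pi(\rho_i\varphi)^2/\|\nabla(\rho_i\varphi)\|_{L^2}^2\bigr)$, and $\|\nabla(\rho_i\varphi)\|_{L^2}^2$ exceeds $\|\nabla\varphi\|_{L^2}^2$ by lower-order terms, so the naive argument only yields the inequality with some constant strictly below $4\pi$ --- equivalently, with $\tfrac{1}{16\pi}$ replaced by $\tfrac{1}{16\pi-\eps}$. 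Since the proposition fixes the sharp coefficient $\tfrac{1}{16\pi}$ (and the paper genuinely uses it, e.g.\ for coercivity of $I_\lambda$ on all of $(0,8\pi)$), you should instead cite the sharp quadratic inequality directly: Moser's original work for $\Sigma=\Sd$ (the only case the paper uses) or Fontana's theorem for a general compact surface. With that substitution your argument is complete.
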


As an easy application of the previous proposition, we have
$$ I_{\lambda}(u) \geq \frac{8\pi -\lambda}{16\pi}\int_{\Sd} |\nabla u|^2\ dV_g - C,$$
for all $u \in X$. In particular, $I_{\lambda}$ is coercive for $\lambda \in \left(0, 8\pi \right)$, and a solution for \eqref{equation} can be found as a minimizer.

For larger values of the parameter $\lambda$ the previous inequality does not give any information. In particular, for $\lambda>8\pi$ it can be easily seen that the functional is not bounded from below. See Lemma~\ref{notlimbl2} and Lemma~\ref{notlimb}.

Subsequently, we recall a result which roughly speaking states that if, into $\ell+1$ regions of a surface $\Sigma$, $e^u$ has integral controlled from below (in terms of $\int_\Sigma e^u\,dV_g$), the constant $\frac1{16\pi}$ can be basically divided by $\ell+1$. The following proposition has been proved for the first time in \cite{ChenLiIneq}, with $\tilde H=1$ and $\ell=1$, and generalized in \cite{Djad} for $\ell>1$. Assuming $\tilde H$ only bounded does not require any changes in the arguments of the proof.

\begin{proposition}\label{prop:ChenLiIneq}
Let $\Sigma$ be a compact surface, $\tilde H:\Sigma\to\R$, with $0\leq\tilde H(x)\leq C_0$. Let $\ell$ a positive integer and $\Omega_1, \ldots, \Omega_{\ell+1}$ be subsets of $\Sigma$ with $\dist(\Omega_i, \Omega_j)\geq\delta$, for $i\neq j$, where $\delta$ is a positive real number, and fix $\gamma\in(0,\frac{1}{\ell+1})$.

Then for any $\eps>0$ there exists a constant $C=C(C_0,\eps,\delta,\gamma)$ such that
\beq\label{MTimproved}
\log\int_{\Sigma}\tilde H e^{u}dV_g\leq\frac{1}{16(\ell+1)\pi-\eps}\int_\Sigma |\nabla u|^2 dV_g\,+\frac1{|\Sigma|}\int_\Sigma u\,dV_g +C
\eeq
for all functions $u\in H^1(\Sigma)$ satisfying
\beq\label{measspread}
\frac{\int_{\Omega_i}\tilde He^u\,dV_g}{\int_\Sigma \tilde H e^u\,dV_g}\geq\gamma, \qquad\textnormal{for $i=1,\ldots,\ell+1$.}
\eeq
\end{proposition}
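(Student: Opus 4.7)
The plan is to deduce Proposition~\ref{prop:ChenLiIneq} from the classical Moser--Trudinger inequality (Proposition~\ref{prop:MT}) by localizing it around each of the $\ell+1$ regions $\Omega_i$, obtaining $\ell+1$ separate copies of the classical inequality, and then averaging them to gain the factor $\ell+1$ in the denominator.

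\textbf{Normalization and localization.} First, since both sides of \eqref{MTimproved} are invariant under adding a constant to $u$, I would assume $\dashint_\Sigma u\,dV_g=0$. Combining \eqref{measspread} with $0\le\tilde H\le C_0$ gives, for every $i\in\{1,\dots,\ell+1\}$,
$$\log\int_\Sigma \tilde H e^u\,dV_g\le\log(C_0/\gamma)+\log\int_{\Omega_i}e^u\,dV_g,$$
so it suffices to bound $\log\int_{\Omega_i}e^u\,dV_g$ uniformly in $i$. Using $\dist(\Omega_i,\Omega_j)\ge\delta$, I would build smooth cutoffs $\eta_i$, $0\le\eta_i\le 1$, $\eta_i\equiv 1$ on $\Omega_i$, $\supp\eta_i\subset(\Omega_i)^{\delta/3}$, $\|\nabla\eta_i\|_\infty\le C/\delta$, so that the $\eta_i$ --- and likewise the annular regions $A_i:=(\Omega_i)^{\delta/3}\setminus\Omega_i$ --- have pairwise disjoint supports. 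Setting $c_i:=\dashint_{A_i}u\,dV_g$ and using $\int_{\Omega_i}e^u\,dV_g=e^{c_i}\int_{\Omega_i}e^{u-c_i}\,dV_g\le e^{c_i}\int_\Sigma e^{\eta_i(u-c_i)}\,dV_g$, an application of Proposition~\ref{prop:MT} to $\eta_i(u-c_i)$ produces
$$\log\int_{\Omega_i}e^u\,dV_g\le c_i+\frac{1}{16\pi}\int_\Sigma|\nabla(\eta_i(u-c_i))|^2\,dV_g+\frac{1}{|\Sigma|}\int_\Sigma\eta_i(u-c_i)\,dV_g+C.$$

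\textbf{Averaging and cross-terms.} Summing these $\ell+1$ inequalities and dividing by $\ell+1$ produces the factor $\frac{1}{16(\ell+1)\pi}$ in front of $\sum_i\int_\Sigma|\nabla(\eta_i(u-c_i))|^2\,dV_g$. Expanding the gradient as $\eta_i^2|\nabla u|^2+2\eta_i(u-c_i)\nabla\eta_i\cdot\nabla u+(u-c_i)^2|\nabla\eta_i|^2$ and applying Young's inequality, one exploits that the supports are pairwise disjoint (so $\sum_i\eta_i^2\le 1$ and $\sum_i\int_{A_i}|\nabla u|^2\le\int_\Sigma|\nabla u|^2$); Poincar\'e--Wirtinger on each $A_i$ gives $\int_{A_i}(u-c_i)^2\le C_\delta\int_{A_i}|\nabla u|^2$, so that all cross-terms are absorbed into the gradient norm. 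The lower-order sums $\frac{1}{\ell+1}\sum c_i$ and $\frac{1}{|\Sigma|(\ell+1)}\sum\int\eta_i(u-c_i)\,dV_g$ are handled using $\dashint u=0$ and Poincar\'e on $\Sigma$, contributing either to $C$ or to a negligible mean-value term.

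\textbf{Main obstacle.} The delicate point is to arrange matters so that the coefficient of $\int_\Sigma|\nabla u|^2$ is exactly $\frac{1}{16(\ell+1)\pi-\varepsilon}$, rather than $\frac{K(\delta)}{16(\ell+1)\pi}$ for some fixed $K(\delta)>1$, which is what the na\"ive Young estimate on $2\eta_i(u-c_i)\nabla\eta_i\cdot\nabla u$ produces. Closing with the sharp constant requires either a more refined absorption scheme --- exploiting that the statement permits $C=C(C_0,\varepsilon,\delta,\gamma)$ to blow up as $\varepsilon\to 0$, so that one can split $\int_\Sigma|\nabla u|^2$ between $\cup_i\Omega_i$ and $\cup_i A_i$ and treat each piece via a tailored local embedding --- or a contradiction argument via Brezis--Merle concentration-compactness: a violating sequence $u_n$ must have $\int|\nabla u_n|^2\to\infty$, and by \eqref{measspread} the normalized measures $\tilde H e^{u_n}/\int\tilde H e^{u_n}\,dV_g$ must concentrate at at least $\ell+1$ distinct points (one in each $\Omega_i$), each contributing at least $16\pi$ to the Dirichlet energy, contradicting the growth rate of $\int|\nabla u_n|^2$ relative to $\log\int\tilde H e^{u_n}\,dV_g$ implied by the failure of \eqref{MTimproved}.
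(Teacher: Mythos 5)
The paper does not prove this proposition itself; it cites Chen--Li (for $\ell=1$, $\tilde H\equiv 1$) and Djadli (general $\ell$). Measured against that argument, your cutoff-and-average skeleton is the right one, and you honestly flag the real obstacle, but neither of your suggested fixes closes it. With $c_i=\dashint_{A_i}u$, the Poincar\'e--Wirtinger bound $\int_{A_i}(u-c_i)^2\le C_\delta\int_{A_i}|\nabla u|^2$ carries a \emph{fixed} constant $C_\delta$ --- a geometric Poincar\'e constant of the collar --- with no mechanism for making it small; after averaging, the coefficient of $\int_\Sigma|\nabla u|^2$ is $\frac{1+O(C_\delta)}{16(\ell+1)\pi}$ with $O(C_\delta)$ bounded away from zero, and letting the additive constant $C$ blow up as $\varepsilon\to0$ does not help because the offending term is quadratic in $\nabla u$, not constant. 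The Brezis--Merle alternative is also not self-contained: failure of \eqref{MTimproved} along a sequence $u_n$ does not by itself force concentration of $\tilde He^{u_n}/\int_\Sigma\tilde He^{u_n}\,dV_g$, nor hand you a $16\pi$ quantization per bubble, without in effect invoking an inequality of the very type being proved.

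The ingredient you are missing --- and the heart of Djadli's proof --- is a decomposition of $u$ \emph{itself}, not a choice of normalizing constants $c_i$. Normalize $\dashint_\Sigma u=0$ and write $u=u_1+u_2$, where $u_1$ is the $L^2$-projection of $u$ onto the span of the first $N$ nonconstant eigenfunctions of $-\Delta_g$ and $u_2$ its orthogonal complement. Then $\|u_1\|_{L^\infty}\le C_N\|\nabla u\|_{L^2}$ (finite dimensionality plus Poincar\'e), so $e^{u_1}$ contributes at most $\tfrac{\varepsilon'}{2}\|\nabla u\|^2_{L^2}+C_{N,\varepsilon'}$ to the logarithm by Young; and $\|u_2\|_{L^2}^2\le \mu_{N+1}^{-1}\|\nabla u_2\|_{L^2}^2\le\mu_{N+1}^{-1}\|\nabla u\|_{L^2}^2$, where $\mu_{N+1}$ is the $(N{+}1)$-th eigenvalue, so for $N$ large one gets $\|u_2\|_{L^2}^2\le\tilde\varepsilon\|\nabla u\|_{L^2}^2$ with $\tilde\varepsilon>0$ arbitrarily small. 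Applying Proposition~\ref{prop:MT} to $\eta_i u_2$, the cross term is $\le 2C_\delta\sqrt{\tilde\varepsilon}\,\|\nabla u\|_{L^2}^2$ and the quadratic term $\int u_2^2|\nabla\eta_i|^2\le C_\delta^2\tilde\varepsilon\,\|\nabla u\|_{L^2}^2$: both are now \emph{small} multiples of $\|\nabla u\|^2$. Summing over $i$, using disjointness of the supports so that $\sum_i\int\eta_i^2|\nabla u_2|^2\le\int_\Sigma|\nabla u|^2$, dividing by $\ell+1$, and choosing first $\tilde\varepsilon$ and then $\varepsilon'$ small yields the coefficient $\frac{1}{16(\ell+1)\pi-\varepsilon}$. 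This quantitative spectral splitting is precisely what your sketch lacks.
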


Next we recall a criterion which gives sufficient conditions for \eqref{measspread} to hold. We refer to \cite{DjadliMalchiodi} for the proof.
\begin{lemma}\label{lemma:criterion}
Let $\ell$ be a positive integer and suppose that $\eps$ and $r$ are positive numbers and that for a non-negative function $f\in L^1(\Sigma)$ with $\|f\|_{L^1(\Sigma)}=1$ there holds
$$
\int_{\cup_{i=1}^{\ell} B_{p_i}(r)}f dV_g<1-\eps,\qquad\textnormal{for any $\ell$-tuple $p_1,\ldots,p_l \in \Sigma$}.
$$
Then there exist $\bar\eps>0$ and $\bar r>0$ depending only on $\eps$, $r$ and $\Sigma$ (but not on $f$), and $\ell+1$ points $\bar p_1,\ldots, \bar p_{\ell+1}\in \Sigma$ (which depend on $f$) satisfying
$$
\int_{B_{\bar  p_i}(\bar r)}f dV_g\geq\bar\eps, \qquad B_{\bar p_i}(2 \bar r)\cap B_{\bar p_j}(2\bar r)=\emptyset \quad \mbox{for $i,j=1,\ldots,\ell+1$ and $i\neq j$}.
$$
\end{lemma}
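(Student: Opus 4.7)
The plan is to construct the $\ell+1$ points greedily using a finite cover of $\Sigma$ together with the pigeonhole principle. First I fix a scale $\bar r\in(0,r/4)$ and exploit the compactness of $\Sigma$ to obtain a finite covering $\Sigma=\bigcup_{j=1}^{N_0}B_{q_j}(\bar r)$, where $N_0=N_0(\bar r,\Sigma)$; then I set $\bar\eps:=\eps/N_0$. By construction both $\bar r$ and $\bar\eps$ depend only on $\eps$, $r$ and $\Sigma$, and not on $f$.

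The selection of the points is inductive on $k=0,1,\ldots,\ell$. Suppose I have already chosen $\bar p_1,\ldots,\bar p_k$ with $k\le\ell$, pairwise $2\bar r$-separated and each verifying $\int_{B_{\bar p_i}(\bar r)}f\,dV_g\ge\bar\eps$. To produce $\bar p_{k+1}$ I complete the current collection to an $\ell$-tuple (e.g.\ by repeating $\bar p_1$) and apply the hypothesis, obtaining $\int_{\Sigma\setminus\bigcup_{i=1}^k B_{\bar p_i}(r)}f\,dV_g>\eps$. Every covering ball $B_{q_j}(\bar r)$ that meets $\Sigma\setminus\bigcup_{i=1}^k B_{\bar p_i}(r)$ has its center at distance $\ge r-\bar r$ from every $\bar p_i$, and the triangle inequality combined with $\bar r<r/4$ then gives $B_{q_j}(\bar r)\cap B_{\bar p_i}(2\bar r)=\emptyset$ for all $i\le k$. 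Since a mass exceeding $\eps$ is distributed over at most $N_0$ such balls, pigeonhole produces one with $f$-integral $\ge\eps/N_0=\bar\eps$, and I take its center as $\bar p_{k+1}$. Iterating this scheme through $k=0,1,\ldots,\ell$ delivers the required $\ell+1$ points.

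The main potential obstacle is the geometric bookkeeping: one needs to ensure that every newly selected $\bar r$-ball is automatically $2\bar r$-separated from each ball chosen in previous steps, uniformly in $k$ and in $f$. The choice $\bar r\le r/4$ resolves this in one stroke, since a point outside $B_{\bar p_i}(r)$ whose $\bar r$-neighborhood is considered remains at distance $>r-2\bar r\ge 2\bar r$ from $\bar p_i$; no Vitali-type refinement or greedy maximization of the $\bar r$-ball mass is required. A minor technical point is that the hypothesis is stated only for $\ell$-tuples, but padding a shorter collection by repetition is harmless, because repeating a point only shrinks the union of $r$-balls and hence preserves the mass estimate on its complement.
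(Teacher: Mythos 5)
Your approach (cover $\Sigma$ by finitely many $\bar r$-balls, pad the current $k$-tuple to an $\ell$-tuple to invoke the hypothesis, then pigeonhole) is the standard one and is what the cited reference does as well, but your separation constant is off, and as written the key step fails. The target conclusion $B_{\bar p_i}(2\bar r)\cap B_{\bar p_j}(2\bar r)=\emptyset$ is equivalent to $\dist(\bar p_i,\bar p_j)\ge 4\bar r$. At the inductive step you set $\bar p_{k+1}=q_j$, where $B_{q_j}(\bar r)$ meets $\Sigma\setminus\bigcup_{i\le k}B_{\bar p_i}(r)$; by the triangle inequality this yields $\dist(q_j,\bar p_i)>r-\bar r$ (not $r-2\bar r$ as you wrote). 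With your choice $\bar r<r/4$ this gives only $\dist(q_j,\bar p_i)>3\bar r$, which does \emph{not} ensure disjointness of the two $2\bar r$-balls. Indeed you verified only $B_{q_j}(\bar r)\cap B_{\bar p_i}(2\bar r)=\emptyset$, but what the lemma asks for concerns $B_{q_j}(2\bar r)$, not $B_{q_j}(\bar r)$.

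The fix is immediate: choose $\bar r\le r/5$, so that $r-\bar r\ge 4\bar r$ and hence $\dist(q_j,\bar p_i)>4\bar r$. With that single change everything else in your argument is correct: the covering number $N_0=N_0(\bar r,\Sigma)$ and $\bar\eps=\eps/N_0$ depend only on $\eps$, $r$, $\Sigma$; padding a shorter collection by repeated points only shrinks $\bigcup_i B_{p_i}(r)$ and so preserves the bound $\int_{\Sigma\setminus\bigcup_{i\le k}B_{\bar p_i}(r)}f>\eps$; and pigeonhole on the at most $N_0$ covering balls meeting that complement produces a center with $f$-mass at least $\bar\eps$. For the base case $k=0$ you should simply use $\int_\Sigma f=1$ rather than ``repeating $\bar p_1$'' (which does not yet exist), but this is cosmetic.
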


Next we introduce a localized version of the Moser-Trudinger inequality obtained in \cite{MalchiodiRuizSphere,DjadliMalchiodi}.

\begin{proposition}\label{prop:MTlocalized}
Assume that $\Sigma$ is a compact surface (with or without boundary), and $\tilde H:\Sigma\to\R$ measurable, $0\leq \tilde H(x)\leq C_0$ a.e. $x\in \Sigma$. Let $\Omega\subset \Sigma$, $\delta>0$ such that $\dist(\Omega,\partial \Sigma)>\delta$.

Then, for any $\eps>0$ there exists a constant $C=C(C_0,\eps,\delta)$ such that for all $u\in H^1(\Sigma)$,
\beq\label{eq:MTineqWeak}
\log \int_{\O} \tilde H(x) e^u\,dV_g\leq \frac 1{16\pi-\eps}\int_\Sigma |\nabla_g u|^2\,dV_g+\dashint_\Sigma u\,dV_g+C.
\eeq
\end{proposition}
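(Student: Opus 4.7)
The plan is to prove the inequality by contradiction via concentration-compactness, exploiting that the hypothesis $\dist(\Omega,\partial\Sigma)>\delta$ forces any blow-up point to lie strictly inside $\Sigma$, where the local geometry is Euclidean and the sharp Moser-Trudinger constant $\tfrac{1}{16\pi}$ is available. Since $0\le\tilde H\le C_0$, the factor $\tilde H$ costs only an additive $\log C_0$ on the right-hand side, so we may assume $\tilde H\equiv 1$. Both sides of \eqref{eq:MTineqWeak} are invariant under adding constants to $u$, so we also normalize $\dashint_\Sigma u\,dV_g=0$.

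Fix $\varepsilon_0>0$ and argue by contradiction: suppose there is a sequence $(u_n)\subset H^1(\Sigma)$ with $\dashint_\Sigma u_n=0$ such that
$$
E_n:=\log\int_\Omega e^{u_n}\,dV_g-\frac{1}{16\pi-\varepsilon_0}\int_\Sigma|\nabla u_n|^2\,dV_g\;\longrightarrow\;+\infty.
$$
If $\Lambda_n:=\int_\Sigma|\nabla u_n|^2\,dV_g$ stayed bounded then $(u_n)$ would be bounded in $H^1(\Sigma)$; viewing $\Sigma$ as a subset of a closed surface $\hat\Sigma$ (its double, smoothed near $\partial\Sigma$) and extending $u_n$ with controlled $H^1$-loss, Proposition~\ref{prop:MT} on $\hat\Sigma$ would bound $\log\int_\Sigma e^{u_n}$ uniformly, contradicting $E_n\to+\infty$. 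Hence $\Lambda_n\to+\infty$.

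Consider the probability measures $\mu_n:=\chi_\Omega e^{u_n}\,dV_g\big/\int_\Omega e^{u_n}\,dV_g$; along a subsequence $\mu_n\rightharpoonup\mu$ weakly in $\mathcal{M}(\overline{\Omega})$. Lemma~\ref{lemma:criterion} yields a dichotomy. If the criterion fails, namely mass is not concentrated near a single point, then \eqref{measspread} holds with $\ell=1$ for $u_n$ on two disjoint balls inside $\Omega$, and Proposition~\ref{prop:ChenLiIneq} gives $\log\int_\Omega e^{u_n}\le\tfrac{1}{32\pi-\eta}\Lambda_n+\dashint_\Sigma u_n+O(1)$. Since $\tfrac{1}{32\pi-\eta}<\tfrac{1}{16\pi-\varepsilon_0}$ for $\eta$ sufficiently small, this forces $E_n\to-\infty$, a contradiction. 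Otherwise $\mu=\delta_{x_0}$ for some $x_0\in\overline\Omega$, and the hypothesis $\dist(\Omega,\partial\Sigma)>\delta$ places $x_0$ at distance at least $\delta$ from $\partial\Sigma$.

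The final step is a local blow-up analysis on a geodesic ball $B=B_{\delta/2}(x_0)\subset\Sigma$, on which $\mu_n$ carries almost all its mass. In conformal coordinates on $B$ the problem reduces, up to a smooth positive weight, to the Euclidean sharp Moser-Trudinger inequality on a disk in $\mathbb{R}^2$; equivalently, after standard rescaling, limit profiles satisfy $-\Delta U=e^U$ on $\mathbb{R}^2$ with finite total mass, which by the Chen-Li classification gives exactly the energy $\int|\nabla U|^2=16\pi$ per bubble and hence the asymptotic
$$
\log\int_B e^{u_n}\,dV_g\;\le\;\tfrac{1}{16\pi}\Lambda_n+\dashint_\Sigma u_n+O(1).
$$
Combined with $\dashint_\Sigma u_n=0$ this contradicts $E_n\to+\infty$, since $\tfrac{1}{16\pi-\varepsilon_0}>\tfrac{1}{16\pi}$. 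The main obstacle is this last step: one has to rule out any energy leakage toward $\partial\Sigma$ (which is possible precisely because $x_0$ lies at distance $\ge\delta$ from $\partial\Sigma$) and match the Euclidean bubble energy to the intrinsic Riemannian integral $\int_\Sigma|\nabla u_n|^2$, so that the Euclidean sharp constant transfers without loss to the manifold setting.
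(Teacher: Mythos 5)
The paper does not prove Proposition~\ref{prop:MTlocalized} itself (it is recalled from \cite{MalchiodiRuizSphere,DjadliMalchiodi}), so your argument must stand on its own, and it does not: the decisive step fails. After reducing to the case where the measures $\mu_n$ concentrate at an interior point $x_0$, you claim that ``limit profiles satisfy $-\Delta U=e^U$ on $\R^2$'' and invoke the Chen--Li classification to conclude $\log\int_B e^{u_n}\le \tfrac1{16\pi}\Lambda_n+O(1)$. But the $u_n$ are \emph{arbitrary} $H^1$ functions produced by negating the inequality; they solve no PDE, so there is no equation for rescaled limits and no classification to appeal to. Blow-up/quantization arguments of that type apply to sequences of solutions of Liouville equations (as in Section~\ref{Scom} of the paper), not to a general extremizing sequence for a functional inequality. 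Worse, the estimate you want in the concentrated case \emph{is} the inequality being proved in its only nontrivial regime, so the argument is circular exactly where the work is. (The dichotomy itself is also of limited use here: the ``spread'' branch via Lemma~\ref{lemma:criterion} and Proposition~\ref{prop:ChenLiIneq} gives the better constant $\tfrac1{32\pi-\eta}$, but the ``concentrated'' branch is the generic one and carries all the difficulty. Note also that when $\Sigma$ is closed the proposition is an immediate consequence of Proposition~\ref{prop:MT}, since $\int_\Omega \tilde He^u\le C_0\int_\Sigma e^u$; the entire content is the boundary case, where the global constant degenerates to $8\pi$ near $\partial\Sigma$.)

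The standard (and correct) proof is direct, not by contradiction. Normalize $\dashint_\Sigma u=0$ and take a cutoff $\eta$ with $\eta\equiv1$ on $\Omega$, $\supp\eta$ at distance $\ge\delta/2$ from $\partial\Sigma$, $|\nabla\eta|\le C/\delta$. Then $\int_\Omega\tilde He^u\le C_0\int_{\hat\Sigma}e^{\eta u}$, where $\eta u$ is extended by zero to a closed surface $\hat\Sigma$ containing $\Sigma$, and Proposition~\ref{prop:MT} applies on $\hat\Sigma$ with the constant $\tfrac1{16\pi}$. The price is the error terms $\int u^2|\nabla\eta|^2\,dV_g$ and $\dashint_{\hat\Sigma}\eta u$, which cannot be absorbed by a crude Poincar\'e inequality without destroying the constant; one controls them by splitting $u=u_1+u_2$ into its projection onto the span of the eigenfunctions of $-\Delta_g$ with eigenvalue below some $\Lambda_\eps$ and the complementary part: $\|u_1\|_{L^\infty}\le C_\eps\|\nabla u\|_{L^2}$ contributes only a term $C_\eps\|\nabla u\|_{L^2}\le \eps'\|\nabla u\|_{L^2}^2+C$, while $\|u_2\|_{L^2}^2\le\Lambda_\eps^{-1}\|\nabla u_2\|_{L^2}^2$ is absorbed into the $-\eps$ in $16\pi-\eps$. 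Your proposal never confronts these cutoff error terms, which is precisely where the loss $\eps$ in the statement comes from.
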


\

\subsection{A priori estimates on the entire solutions}\label{subapr}
\

This subsection is devoted to present some a priori $L^{\infty}$ bounds for solutions of the problem
\beq\label{CLp}
-\Delta u = R(x) e^u \quad \mbox{in $\mathbb{R}^2$},
\eeq
where $R$ is a sign-changing function.\\
The following results are originally due to Chen and Li (see \cite{ChenLiDCDS,ChenLiAnn}) who proved them in order to derive a priori bounds for solutions of the \emph{regular} Nirenberg problem. We will focus on the estimates in the region where $R\leq\eps$, for some small $\eps$. They performed a stereographic projection to transform the equation on $\Sd$ into \eqref{CLp}. In particular, in their case, it was natural to assume the following asymptotic growth of the solutions at infinity
\beq\label{agN}
u \sim -4 \log |x|
\eeq
and that $R$ has a limit as $|x|\to+\infty$, for example,
\beq\label{SH}
\lim_{|x|\to+\infty}R(x)\in(0,+\infty).
\eeq

Actually, we will show that their approach, with proper modifications (see Lemma~\ref{lem21} below), allows to deal also with solutions of \eqref{CLp} behaving at infinity as
\beq\label{alphabeh}
u \sim -\eta \log |x|,
\eeq
for some $\eta > 4$, if the function $R$ satisfies
\beq\label{coerciveta}
\lim_{|x|\to+\infty}R(x)|x|^{4-\eta}\in(0,+\infty).
\eeq

Chen and Li in \cite{ChenLiDCDS} assumed $R\in C^{2,\alpha}(\R^2)$ and $\nabla R(x)\neq0$ in $\{x\in\R^2\,|\,R(x)=0\}$, but in fact their proof required only
\begin{equation}\label{hpCL}
R^0=\overline{R^+}\cap \overline{R^-},
\end{equation}
\begin{equation}\label{hpregularity}
R\in C^0(\R^2)\cap C^{2,\alpha}(U)\;\:\textnormal{and $\nabla R(x)\neq0$ for $x\in\overline{R^+}\cap \overline{R^-}$},
\end{equation}
where
$$
R^0=\{x\in\R^2\,|\,R(x)=0\},\quad R^\pm=\{x\in\R^2\,|\,R(x)\gtrless0\},
$$
and $U$ is a neighborhood of $\overline{R^+}\cap\overline{R^-}$.\\
In general, if we assume \eqref{hpregularity} and not  \eqref{hpCL}, $R^0$ can be decomposed in the following disjoint union
\[
R^0=(\overline{R^+}\cap\overline{R^-})\amalg \mathcal{Q}^+\amalg \mathcal{Q}^-
\]
where $\mathcal{Q}^\pm$ are such that $\partial \mathcal{Q}^\pm\subset \overline{R^\pm}\setminus\overline{R^\mp}$, i.e. $\mathcal{Q}^\pm$ are the (possibly empty) components of $R^0$ surrounded by positive/negative nodal regions of $R$.\\
Let us set
\[
\mathcal{Q}=\mathcal{Q}^+\cup\mathcal{Q}^-.
\]
At last we define
\begin{equation}\label{erre}
r=\frac13\min\{\dist(\mathcal{Q}^+, \overline{R^-}),\dist(\mathcal{Q}^-, \overline{R^+})\}>0
\end{equation}
and state a generalized version of Proposition 2.1 and Proposition 3.1 in \cite{ChenLiDCDS}, replacing assumptions \eqref{agN} and \eqref{SH} with \eqref{alphabeh} and \eqref{coerciveta} respectively and removing hypothesis \eqref{hpCL}.

\begin{theorem}\label{thm:ChenLi}
Assume that $R$ verifies \eqref{coerciveta} and \eqref{hpregularity} and that there exist $\beta, \delta>0$ such that $|\nabla R(x) | \geq \beta$ for any $x\in\{x\in\R^2\,|\,|R(x)|\leq \delta\}\setminus(\mathcal{Q})^{r}$. Then there are positive constants $\varepsilon$ and $C$, depending only on $\beta,\delta$, $\|R\|_{ C^{2,\alpha}(U)}$ and $\min_{\R^2}R$, such that for any solution u of \eqref{CLp} satisfying \eqref{alphabeh}, $u\leq C$ in  $\{x \in \mathbb{R}^2 | R(x)\leq\varepsilon \} \setminus (\mathcal{Q}^+)^{r}$.
\end{theorem}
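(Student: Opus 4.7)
The plan is to follow the blow-up strategy of Chen and Li \cite{ChenLiDCDS}, adapting it to the two novelties of this statement: the decay rate $u\sim -\eta\log|x|$ with general $\eta>4$ in place of $\eta=4$, and the possibility that $R^0$ contains components $\mathcal Q^\pm$ isolated from the opposite-sign region (since hypothesis \eqref{hpCL} is no longer imposed). I would argue by contradiction: suppose there exist $\varepsilon_k\downarrow 0$, solutions $u_k$ of \eqref{CLp} satisfying \eqref{alphabeh}, and points $x_k\in\{R\le\varepsilon_k\}\setminus(\mathcal Q^+)^r$ with $u_k(x_k)\to+\infty$.

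The first step is to localize the blow-up in a bounded region. Here I would apply the modified Kelvin transform
$$
\tilde v_k(y)=u_k(y/|y|^2)-\eta\log|y|,
$$
which converts \eqref{CLp} into $-\Delta\tilde v_k=\widetilde R_k(y)\,e^{\tilde v_k}$ with $\widetilde R_k(y)=|y|^{\eta-4}R(y/|y|^2)$. By \eqref{coerciveta} the coefficient $\widetilde R_k$ extends continuously to $y=0$ with a $k$-independent limit, while \eqref{alphabeh} gives that $\tilde v_k$ is bounded near $0$ together with the uniform estimate $\int_{B_1(0)}e^{\tilde v_k}\,dy=\int_{|x|>1}e^{u_k(x)}|x|^{\eta-4}\,dx\le C$, since $e^{u_k}|x|^{\eta-4}\le C|x|^{-4}$ at infinity. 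Standard elliptic theory then yields a uniform $L^\infty$ bound for $\tilde v_k$ on smaller neighborhoods of $0$, which translates into a uniform upper bound on $u_k$ outside a fixed large ball; this is the content of the forthcoming Lemma~\ref{lem21}. Up to subsequence $x_k\to x_\infty$ with $x_\infty\in\overline{R^0}\setminus(\mathcal Q^+)^r$ in a bounded region.

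I would then perform a case analysis on the location of $x_\infty$. If $R\le 0$ on some fixed ball $B_\rho(x_\infty)$ (which happens whenever $x_\infty\in R^-$ or $x_\infty\in\mathcal Q^-$, using that $\partial\mathcal Q^-\subset\overline{R^-}\setminus\overline{R^+}$), then $u_k$ is subharmonic on $B_\rho(x_\infty)$; combining the maximum principle with elliptic estimates and the uniform $L^1$ bound on $R e^{u_k}$ (inherited via stereographic projection from the Gauss--Bonnet normalization of the original problem on $\Sd$) yields a uniform $L^\infty$ upper bound for $u_k$ on $B_{\rho/2}(x_\infty)$, contradicting $u_k(x_k)\to+\infty$. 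The alternative $x_\infty\in\mathcal Q^+$ is excluded a priori by $x_k\notin(\mathcal Q^+)^r$ together with the positivity of $r$ fixed in \eqref{erre}. There remains only the possibility $x_\infty\in\overline{R^+}\cap\overline{R^-}$, where \eqref{hpregularity} provides $R\in C^{2,\alpha}$ in a neighborhood and $\nabla R(x_\infty)\neq 0$.

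At such a transverse zero the rescaling argument of \cite{ChenLiDCDS} applies: set $\mu_k=e^{-u_k(x_k)/3}$ and $v_k(y)=u_k(x_k+\mu_k y)-u_k(x_k)$, so that the equation reads $-\Delta v_k=\mu_k^{-1}R(x_k+\mu_k y)\,e^{v_k}$; the Taylor expansion of $R$ around $x_\infty$, combined with $R(x_k)\to 0$, gives $\mu_k^{-1}R(x_k+\mu_k y)\to b\cdot y$ locally uniformly with $b=\nabla R(x_\infty)\neq 0$. Passing to the limit produces a nontrivial solution $v_\infty$ of
$$
-\Delta v_\infty=(b\cdot y)\,e^{v_\infty}\quad\text{in}\ \R^2,\qquad \int_{\R^2}|b\cdot y|\,e^{v_\infty}\,dy<+\infty,
$$
which is excluded by a Pohozaev identity applied against the direction $b$, exactly as in \cite{ChenLiDCDS}. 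The main obstacle I anticipate is precisely this Pohozaev step, because one has to verify that the boundary integrals on $\partial B_\rho$ decay as $\rho\to+\infty$; this requires sharp decay estimates for $v_\infty$ and $\nabla v_\infty$ at infinity, which in turn rely on the exponent matching between \eqref{coerciveta} and \eqref{alphabeh}. Once this rigidity is in place, the proof delivers the claimed $L^\infty$ bound on $\{R\le\varepsilon\}\setminus(\mathcal Q^+)^r$.
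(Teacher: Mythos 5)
Your overall architecture is genuinely different from the paper's: the paper deduces Theorem~\ref{thm:ChenLi} from Proposition~\ref{propChLi1} (a direct maximum-principle argument built on the reflection inequality of Lemma~\ref{lem21}, followed by a \emph{local moving plane} step across the nodal line) together with Lemma~\ref{lem28} (subharmonicity on a neighbourhood of $\mathcal{Q}^-$), whereas you propose a blow-up/contradiction scheme. Your treatment of $\mathcal Q^-$ and of the strictly negative region is in the same spirit as Lemma~\ref{lem28}, but the core of your argument --- the rescaling at a point approaching the nodal line --- has a genuine gap. Writing $\mu_k^{-1}R(x_k+\mu_k y)=\mu_k^{-1}R(x_k)+\nabla R(x_k)\cdot y+O(\mu_k|y|^2)$, convergence to $b\cdot y$ requires $\mu_k^{-1}R(x_k)\to 0$, and $R(x_k)\to0$ gives no control on this ratio: $\mu_k=e^{-u_k(x_k)/3}$ is dictated by the solution while $R(x_k)$ is only constrained by $R(x_k)\le\varepsilon_k$, so $\mu_k^{-1}R(x_k)$ may tend to a positive constant or to $+\infty$. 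The latter regime is precisely interior blow-up inside $R^+$, which the theorem does \emph{not} exclude (the bound is claimed only where $R\le\varepsilon$), so it cannot be ruled out by a Liouville theorem for $-\Delta v=(b\cdot y)e^{v}$; one must instead show that blow-up points keep a definite distance from the nodal line, which is exactly what the moving plane step in Proposition~\ref{propChLi1} accomplishes. As written, you have replaced the hard step by an unproved claim, and you yourself flag the Pohozaev decay estimates as open.

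A second, independent problem is uniformity. Hypothesis \eqref{alphabeh} is a qualitative, per-solution statement, so in your compactness argument the assertions that $\tilde v_k$ is bounded near $0$ and that $\int_{B_1(0)}e^{\tilde v_k}\,dy\le C$ have no reason to hold with constants independent of $k$; yet the theorem requires constants depending only on $\beta$, $\delta$, $\|R\|_{C^{2,\alpha}(U)}$ and $\min_{\R^2}R$. The paper's Lemma~\ref{lem21} is precisely the device that converts the qualitative decay \eqref{alphabeh} into a quantitative pointwise bound with admissible constants, and some such step is unavoidable in your scheme as well. Relatedly, the ``uniform $L^1$ bound on $Re^{u_k}$ inherited from Gauss--Bonnet on $\Sd$'' is not available: Theorem~\ref{thm:ChenLi} is a self-contained statement about \eqref{CLp} on $\R^2$ with no normalization hypothesis, and the signed total mass $\int_{\R^2}Re^{u}\,dx=2\pi\eta$ forced by \eqref{alphabeh} does not control $\int_{\{R<0\}}|R|e^{u}$ and $\int_{\{R>0\}}Re^{u}$ separately; in the paper these local mass bounds are themselves consequences of Lemma~\ref{lem21}.
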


\begin{remark}\label{rem:Rflambda}
Let us consider the following family of positive functions
\[
F_\eta(x)=2\pi\eta\left(\frac{4}{(1+|x|^2)^2}\right)^{1-\frac{\eta}{4}},
\]
depending on a parameter $\eta$ varying in a bounded subset $I\subset (4,+\infty)$.\\
If we assume that $R$ verifies \eqref{hpregularity}, that there exist $\beta,\,\delta>0$ such that $|\nabla R(x)|\geq \beta$ for any $x\in \{x\in \R^2\,|\,|R(x)|\leq\delta\}\setminus(\mathcal Q)^r$ and that  $\lim_{|x|\to+\infty}R(x) F_\eta(x)|x|^{4-\eta}\in(0,+\infty)$, then it can be seen that, for any solution $u$ of $-\Delta u =R F_\eta e^u$ in $\R^2$ satisfying \eqref{alphabeh}, $u\leq C$ in $\{x \in \mathbb{R}^2 | R(x)\leq\varepsilon \} \setminus (\mathcal{Q}^+)^{r}$, where $C$ and $\varepsilon$ are positive constants depending on $\beta$, $\delta$, $L=\sup_{\eta\in I}\|F_\eta\|_{C^{2,\alpha}(U)}$ and $M=\inf_{\eta\in I}\min_{x\in\R^2}R(x)F_\eta(x)$ (with $L<+\infty$ and $M>-\infty$) but not on $\eta$.
\end{remark}

Theorem~\ref{thm:ChenLi} follows from Proposition~\ref{propChLi1} and Lemma~\ref{lem28}.

\begin{proposition}\label{propChLi1}
Assume that $R$ verifies \eqref{coerciveta} and \eqref{hpregularity} and that there exist $\beta, \delta>0$ such that $|\nabla R(x) | \geq \beta$ for any $x\in\{x\in\R^2\,|\,|R(x)|\leq \delta\}\setminus(\mathcal{Q})^{r}$. Then there are positive constants $\varepsilon$ and $C$, depending only on $\beta,\delta$, $\|R\|_{ C^{2,\alpha}(U)}$ and $\min_{\R^2}R$, such that for any solution u of \eqref{CLp} satisfying \eqref{alphabeh}, $u\leq C$ in  $\{x \in \mathbb{R}^2 | R(x)\leq\varepsilon \} \setminus (\mathcal{Q})^{r}$.
\end{proposition}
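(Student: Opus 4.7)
My plan is to adapt the argument of \cite{ChenLiDCDS} to the generalized asymptotic setting \eqref{alphabeh}-\eqref{coerciveta}, with the modifications at infinity encapsulated in Lemma~\ref{lem21}. The proof proceeds by contradiction: supposing the conclusion fails, I extract a sequence $\varepsilon_k\downarrow 0$, solutions $u_k$ of \eqref{CLp} satisfying \eqref{alphabeh} (possibly with $\eta_k$ varying in a compact subset of $(4,+\infty)$, cf.\ Remark~\ref{rem:Rflambda}), and points $x_k\in\{R\le\varepsilon_k\}\setminus(\mathcal Q)^r$ with $u_k(x_k)\to +\infty$.

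The first step localizes the sequence $\{x_k\}$. I would integrate \eqref{CLp} over $\R^2$ and use \eqref{alphabeh} to obtain the total mass identity $\int_{\R^2}Re^{u_k}=2\pi\eta_k$, convergence being ensured by \eqref{coerciveta}. Lemma~\ref{lem21} should then provide a uniform upper bound $u_k(x)\le -\eta_k\log|x|+O(1)$ for $|x|$ large, forcing $\{x_k\}$ to be bounded. Passing to a subsequence, $x_k\to x_0$; continuity of $R$ forces $R(x_0)=0$, and since the complement of $(\mathcal Q)^r$ is closed while $R^0=(\overline{R^+}\cap\overline{R^-})\amalg\mathcal Q$, necessarily $x_0\in\overline{R^+}\cap\overline{R^-}$, whence $|\nabla R(x_0)|\ge\beta$ by hypothesis.

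The second step is a blow-up analysis tailored to the linear vanishing of $R$ at $x_0$. I set $\lambda_k:=u_k(x_k)$, $\mu_k:=e^{-\lambda_k/3}$ and $v_k(y):=u_k(x_k+\mu_k y)-\lambda_k$, so that $v_k(0)=0$ and
\[
-\Delta v_k(y)=e^{\lambda_k/3}\,R(x_k+\mu_k y)\,e^{v_k(y)}\quad\text{in }\R^2.
\]
The anisotropic choice $\mu_k=e^{-\lambda_k/3}$ is essential: the usual Liouville scaling $e^{-\lambda_k/2}$ would give the trivial limit equation $-\Delta v=0$, whereas the present scaling captures the leading linear behaviour of $R$ near its nodal line. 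Exploiting the $C^{2,\alpha}$-regularity of $R$ on $U$, a Taylor expansion together with $R(x_k)\le\varepsilon_k$ (possibly after a refined selection of $x_k$ ensuring $R(x_k)=o(\mu_k)$) yields $e^{\lambda_k/3}R(x_k+\mu_k y)\to\nabla R(x_0)\cdot y$ locally uniformly. Combined with the $L^1$ control from the first step and standard elliptic estimates, $v_k$ converges in $C^2_{\mathrm{loc}}$ to a function $v$ solving $-\Delta v=(\nabla R(x_0)\cdot y)\,e^v$ on $\R^2$ with $v(0)=0$ and finite associated mass. A Pohozaev-type identity applied to $v$, exactly as in \cite{ChenLiDCDS}, together with the sign change of the linear weight and the finite-mass constraint, should rule out any non-trivial such solution, producing the desired contradiction.

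The hard part will be the blow-up analysis itself: fixing the correct anisotropic rescaling, handling the possibility that $e^{\lambda_k/3}R(x_k)$ does not tend to zero (by a refined selection of $x_k$ closer to the nodal line of $R$), and carrying out the Pohozaev computation on the limit equation with linear weight. In contrast, the departures from \cite{ChenLiDCDS} dictated by \eqref{alphabeh}-\eqref{coerciveta} are confined to the global estimates of the localization step and absorbed into Lemma~\ref{lem21}; the local blow-up argument is essentially insensitive to the value of $\eta$, which is precisely why the constants $\varepsilon$ and $C$ in the statement do not depend on $\eta$.
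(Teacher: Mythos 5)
The proposal takes a genuinely different route from the paper's. The paper proves the proposition exactly as Chen--Li do in Proposition~3.1 of \cite{ChenLiDCDS}, i.e.\ via a \emph{local moving-plane argument}: first $\int Re^u\,dx$ is shown to be bounded on any ball where $R$ is strictly negative (coming from the total mass identity $\int_{\R^2}Re^u=2\pi\eta$), and combined with the reflection estimate of Lemma~\ref{lem21} this yields $u\le C$ on $\{R\le-\eps\}\setminus(\mathcal Q)^r$; the $C^{2,\alpha}$-regularity of $R$ in $U$ together with $|\nabla R|\ge\beta$ on the nodal line then allows the estimate to be propagated across into $\{R<\eps\}$ by moving planes orthogonal to $\nabla R$. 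You instead propose a blow-up argument with anisotropic rescaling and a Pohozaev-type contradiction at the limit. That route exists in the literature, but as written it has concrete gaps.

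First, after extracting $x_k\to x_0$ with $R(x_0)=0$ you rescale at $x_k$ by $\mu_k=e^{-\lambda_k/3}$ and claim $e^{\lambda_k/3}R(x_k+\mu_k y)\to\nabla R(x_0)\cdot y$. This forces $e^{\lambda_k/3}R(x_k)\to 0$, i.e.\ $R(x_k)=o(\mu_k)$, which does \emph{not} follow from the contradiction hypothesis ($R(x_k)\le\varepsilon_k$ with $\varepsilon_k\downarrow 0$ gives no relation between $R(x_k)$ and $e^{-u_k(x_k)/3}$). The ``refined selection'' you invoke is not cosmetic: one would have to carry out a Schoen-type selection of genuine concentration points, and these maxima $\tilde x_k$ need not lie in $\{R\le\varepsilon_k\}$, nor is it clear one can move them to the nodal line while preserving both $u_k(\tilde x_k)\to+\infty$ and $v_k\le v_k(0)=0$ locally. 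Second, you assert the non-existence of finite-mass solutions to $-\Delta v=(\nabla R(x_0)\cdot y)e^v$ ``exactly as in \cite{ChenLiDCDS}''; but that reference proves its a priori estimate by moving planes, not by a Pohozaev/classification argument, so there is nothing there to quote. The non-existence of entire solutions of that linear-weight Liouville equation under the mass condition is a separate (and nontrivial) fact that you would need to prove or locate elsewhere. By contrast, the paper's moving-plane route bypasses both difficulties: it never needs a blow-up profile at the nodal line, only the already-established upper bound on the negative side of it.
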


The proof of the previous proposition can be recovered mimicking the ideas of Proposition~3.1 in \cite{ChenLiDCDS}, once Lemma~2.1 in \cite{ChenLiDCDS} is substituted by Lemma~\ref{lem21} below. Indeed, one can prove that $\int R e^u dx$  is bounded in any ball where $R$ is strictly negative and combining this fact with Lemma~\ref{lem21}, one gets that a solution $u$ of \eqref{CLp} verifying \eqref{alphabeh} is bounded from above in the region $\{x \in \mathbb{R}^2 | R(x)\leq-\eps \} \setminus (\mathcal{Q})^{r}$. Next, by virtue of assumption \eqref{hpregularity}, one can extend the estimate in the whole region $ \{x \in \mathbb{R}^2 | R(x)<\varepsilon \} \setminus (\mathcal{Q})^{r}$ via a local moving plane method using the regularity of $R$ in $U$ and an estimate of $\int R e^u dx$ in a ball where $R$ is strictly positive.

\begin{lemma}\label{lem21}
Let $x_0$ be such that $R(x_0)<0$. Let $3\eps_0=\dist\left( x_0,R^0\right)$ and let
$$
R(x)\leq -\delta \quad \forall x \in B_{x_0}(2\eps_0),
$$
for a fixed $\delta>0$. Moreover, assume that $R$ satisfies \eqref{coerciveta}.
Then, for every solution $u$ of \eqref{CLp}, verifying \eqref{alphabeh},
\beq\label{conlem21}
u(x_0) \leq u(x) + \eta \log\left(\frac{|x-x_0|}{\eps_0} +1 \right) + C, \qquad \forall\, x \in \mathbb{R}^2,
\eeq
where $C$ is a constant depending only on $\min_{\R^2}R$ and $\delta$.
\end{lemma}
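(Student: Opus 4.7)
The plan is to adapt, essentially verbatim, the proof of Lemma~2.1 in \cite{ChenLiDCDS} --- which handles the case $\eta=4$ and the asymptotic hypothesis \eqref{SH} --- to the generalized setting of \eqref{alphabeh}--\eqref{coerciveta}. The necessary bookkeeping is that throughout the argument the total ``mass'' $8\pi$ of the Nirenberg equation must be systematically replaced by $2\pi\eta$: integrating $-\Delta u=R\,e^u$ over $B_T$ and using $\partial_\nu u\sim -\eta/|x|$ on $\partial B_T$, from \eqref{alphabeh} one obtains
\[
\int_{\R^2} R(y)\,e^{u(y)}\,dy \;=\; 2\pi\eta,
\]
while \eqref{coerciveta} plays the role of \eqref{SH} in ensuring the corresponding rate of decay for the density $R\,e^u$ at infinity.

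First I would introduce the spherical average $\bar u(r)=\frac{1}{2\pi r}\int_{\partial B_{x_0}(r)} u\,d\sigma$, for which the divergence theorem gives $r\,\bar u'(r)=-\frac{1}{2\pi}\int_{B_{x_0}(r)} R\,e^u\,dy$. Since $R\le -\delta$ on $B_{x_0}(2\eps_0)$, in that ball $\Delta u=-R\,e^u\ge\delta\,e^u\ge 0$, so $u$ is subharmonic there and the mean value inequality yields $u(x_0)\le\bar u(r)$ for every $r\in(0,2\eps_0]$; this is the ingredient responsible for the short-range part of \eqref{conlem21}, to be upgraded to a pointwise statement in the last step.

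For $r\ge\eps_0$ I would then establish the uniform bound
\[
\int_{B_{x_0}(r)} R\,e^u\,dy \;\le\; 2\pi\eta + M,\qquad M=M(\min R,\delta),
\]
by splitting the total mass $2\pi\eta$ between $B_{x_0}(r)$ and its complement and controlling the exterior integral from below through the strict negativity of $R$ on $B_{x_0}(2\eps_0)$ together with the quantitative decay of $R\,e^u$ at infinity encoded by \eqref{coerciveta}. Plugging this bound into the ODE for $\bar u$ and integrating from $\eps_0$ to $r$ yields $\bar u(r)\ge\bar u(\eps_0)-\eta\log(r/\eps_0)-C$, which combined with the previous step gives $u(x_0)\le\bar u(r)+\eta\log(r/\eps_0)+C$ for every $r\ge\eps_0$.

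The final step is to pass from the spherical average $\bar u(r)$ to the pointwise value $u(x)$ at a point $x\in\partial B_{x_0}(r)$. I would do this via the Green representation of $u$ on $\R^2$, estimating $u(x)-\bar u(|x-x_0|)$ by the oscillation of $\log|\,\cdot\,-y|$ on the sphere integrated against the signed measure $R\,e^u\,dy$ of finite total variation; absorbing the resulting constants in the ``$+1$'' inside the logarithm in \eqref{conlem21} then yields the claim. The main obstacle is precisely the uniform bound displayed above: since $C$ in \eqref{conlem21} is required to depend only on $\min R$ and $\delta$, and not on the particular solution $u$, one cannot invoke any a priori estimate on $\int R_+\,e^u$; the required control must instead come from a careful iteration on dyadic annuli centered at $x_0$, exploiting the sign of $R$ near $x_0$ together with the decay from \eqref{alphabeh}--\eqref{coerciveta}, exactly as in \cite{ChenLiDCDS} with the systematic substitution $4\rightsquigarrow\eta$.
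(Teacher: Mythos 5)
Your strategy (spherical averages, the ODE for $\bar u(r)$, and a Green-representation step) is genuinely different from the paper's proof, and it runs into the gap that you yourself flag but do not close. The paper's argument is a local comparison via the Kelvin transform: for each $\hat x\in B_{x_0}(\eps_0)$ one sets
$v(x)=u\bigl(\hat x+\eps_0^2\tfrac{x-\hat x}{|x-\hat x|^2}\bigr)-\eta\log\tfrac{|x-\hat x|}{\eps_0}$, computes $-\Delta v=R(\text{reflected point})\,(|x-\hat x|/\eps_0)^{\eta-4}e^{v}$, and applies the maximum principle to $w=v-u+\gamma$ on $B_{\hat x}(\eps_0)$ with $\gamma$ chosen so that $-\delta e^{\gamma}-\min R\le 0$; since every $x\in\R^2$ is the reflection of $x_0$ about $\partial B_{\hat x}(\eps_0)$ for some $\hat x$, this yields \eqref{conlem21} directly. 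That argument is entirely local and its constant visibly depends only on $\delta$ and $\min_{\R^2}R$. (Incidentally, this \emph{is} the proof of Lemma~2.1 in \cite{ChenLiDCDS}, so your opening claim to be following that reference verbatim is inconsistent with the route you then take.)

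The concrete gap in your version is the step
\[
\int_{B_{x_0}(r)}R\,e^{u}\,dy\;\le\;2\pi\eta+M,\qquad M=M(\min R,\delta),
\]
uniformly over all solutions $u$. Writing $\int_{B_{x_0}(r)}Re^{u}=2\pi\eta-\int_{\R^2\setminus B_{x_0}(r)}Re^{u}$, you would need a uniform lower bound on the exterior integral, i.e.\ an upper bound on $\int_{\{R<0\}\setminus B_{x_0}(r)}|R|e^{u}$. Nothing in the hypotheses controls this: $\{R<0\}$ can have large measure far from $x_0$, and $e^{u}$ there is not a priori bounded by any quantity depending only on $\min R$ and $\delta$ (indeed $\int_{\{R>0\}}Re^{u}=2\pi\eta+\int_{\{R<0\}}|R|e^{u}$ can be arbitrarily large). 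The negativity of $R$ on $B_{x_0}(2\eps_0)$ and \eqref{coerciveta} constrain $R$ near $x_0$ and at infinity, not $e^{u}$ in between. Even granting such a bound, plugging $2\pi\eta+M$ into $r\bar u'(r)=-\tfrac{1}{2\pi}\int_{B_{x_0}(r)}Re^{u}$ produces a coefficient $\eta+M/(2\pi)$ in front of the logarithm, not $\eta$, unless $M=0$; and the final passage from $\bar u(|x-x_0|)$ to $u(x)$ via the Green representation again requires a uniform bound on $\int|R|e^{u}$ that you do not have. You should replace this part of the argument by the Kelvin-transform comparison, which sidesteps all mass bounds.
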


\begin{proof}
Let $\hat{x} \in B_{x_0}(\varepsilon_0)$, we claim that
\beq\label{eq1lem21}
u(x) \leq u\left( \hat{x}+\varepsilon_0^2\frac{x-\hat{x}}{|x-\hat{x}|^2}\right)-\eta \log \frac{|x-\hat{x}|}{\varepsilon_0}+C, \quad \forall x \in B_{\hat{x}}(\varepsilon_0).
\eeq

The point $\hat{x}+\varepsilon_0^2\frac{x-\hat{x}}{|x-\hat{x}|^2}$ corresponds to the reflection of $x$ about $\partial B_{\hat{x}}(\varepsilon_0)$. Let $v(x)=u\left(\hat{x}+\varepsilon_0^2\frac{x-\hat{x}}{|x-\hat{x}|^2}\right)-\eta\log \frac{|x-\hat{x}|}{\varepsilon_0}$, which satisfies:
$$
-\Delta v(x)=R\left(\hat{x}+\varepsilon_0^2\frac{x-\hat{x}}{|x-\hat{x}|^2}\right)\left(\frac{|x-\hat{x}|}{\varepsilon_0}\right)^{\eta-4}e^{v(x)}.
$$

Next, we take the auxiliary function $w(x)=v(x)-u(x)+\gamma$, where $\gamma$ is a positive parameter to be determined. Then for $x\in B_{\hat{x}}(\varepsilon_0)$ 
\begin{align*}
& \Delta w + R(x)e^{\phi(x)}w(x) \\
& =\left[ R(x)e^{\gamma}- R \left( \hat{x}+\varepsilon_0^2\frac{(x-\hat{x})}{|x-\hat{x}|^2} \right) \left(\frac{|x-\hat{x}|}{\varepsilon_0}\right)^{\eta-4}\right]e^{v(x)} \leq \left(-\delta e^{\gamma}-m\right)e^{v(x)}
\end{align*}
where $m=\min_{\R^2}R$ and $\phi$ is a function which is between $\gamma + v(x)$ and $u(x)$. Next, we choose $\gamma$ large enough such that $-\delta e^{\gamma} - m \leq 0$. Since $w(x)=\gamma$ in $\partial B_{\hat{x}}(\varepsilon_0)$, by the maximum principle we obtain that
$$
w(x)\geq 0 \quad \mbox{ in $B_{\hat{x}}(\varepsilon_0)$},
$$
which implies \eqref{eq1lem21}. \\
To end the proof, observe that for every $x \in \mathbb{R}^2$ there exists $\hat{x}$ in $ B_{x_0}(\varepsilon_0)$ such that the reflection of $x_0$ about $\partial B_{\hat{x}}(\varepsilon_0)$ is the point $x$, i.e. for all $x \in \mathbb{R}^2$ there exists $\hat{x} \in B_{x_0}(\varepsilon_0)$ such that $x = \hat{x} + \varepsilon_0^2\frac{x_0-\hat{x}}{|x_0-\hat{x}|^2}$. Clearly, $|x_0-\hat{x}||x-\hat{x}|=\varepsilon_0^2$, so \eqref{eq1lem21} implies directly \eqref{conlem21}.
\end{proof}

Next lemma allows to extend the a priori bound to the region $(\mathcal{Q}^-)^r$.

\begin{lemma}\label{lem28}
Under the assumptions of Proposition~\ref{propChLi1}, then $u(x)\leq C$ for any $x \in (\mathcal{Q}^-)^{r}$.
\end{lemma}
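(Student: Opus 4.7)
The plan is to observe that $R \leq 0$ throughout $(\mathcal{Q}^-)^r$, so $u$ is subharmonic there, and then to invoke the classical maximum principle using the bound already provided by Proposition~\ref{propChLi1} for the boundary values.

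First I would check that $(\mathcal{Q}^-)^r$ is a bounded open set. Boundedness follows from \eqref{coerciveta}, which forces $R(x)>0$ for $|x|$ sufficiently large and hence confines $\mathcal{Q}^-\subset R^0$ to a compact subset of $\R^2$. Next, by the very definition of $r$ in \eqref{erre}, one has $\dist(\mathcal{Q}^-,\overline{R^+})\geq 3r$, so $R\leq 0$ on $\overline{(\mathcal{Q}^-)^{2r}}$; in particular $-\Delta u = R\,e^u\leq 0$ on $(\mathcal{Q}^-)^r$, making $u$ subharmonic on this bounded open set. By the maximum principle it then suffices to bound $u$ on $\partial(\mathcal{Q}^-)^r$.

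Every such boundary point lies at distance exactly $r$ from $\mathcal{Q}^-$, hence outside $(\mathcal{Q}^-)^r$, and satisfies $R\leq 0\leq\varepsilon$. To apply Proposition~\ref{propChLi1} at these points I still need to know that $\partial(\mathcal{Q}^-)^r\cap(\mathcal{Q}^+)^r=\emptyset$. Since $\partial\mathcal{Q}^-\subset\overline{R^-}\setminus\overline{R^+}$, one has $\mathcal{Q}^-\subset\overline{R^-}$, and therefore
\[
\dist(\mathcal{Q}^-,\mathcal{Q}^+)\geq \dist(\mathcal{Q}^+,\overline{R^-})\geq 3r,
\]
so that every point at distance $r$ from $\mathcal{Q}^-$ is at distance at least $2r$ from $\mathcal{Q}^+$, as required. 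Proposition~\ref{propChLi1} then yields $u\leq C$ on $\partial(\mathcal{Q}^-)^r$, and subharmonicity combined with the maximum principle extends this bound to all of $(\mathcal{Q}^-)^r$.

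The only delicate point in the argument is the geometric verification that $\partial(\mathcal{Q}^-)^r$ stays outside $(\mathcal{Q}^+)^r$, which is what allows Proposition~\ref{propChLi1} to be applied on the whole boundary; once this is settled, the subharmonicity-plus-maximum-principle step is immediate and completes the proof.
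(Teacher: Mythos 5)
Your approach coincides with the paper's: both establish the bound by noting that $R\le 0$ on a neighborhood of $\mathcal{Q}^-$, so that $u$ is subharmonic there, and then use Proposition~\ref{propChLi1} to control $u$ on the boundary before applying the maximum principle. (The paper works on a slightly enlarged auxiliary domain $\Omega\supset(\mathcal{Q}^-)^{2r}$ with $R\le0$ on $\overline\Omega$ rather than on $(\mathcal{Q}^-)^{r}$ itself, but this is an inessential difference.)

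One intermediate step in your geometric verification is not quite right: from $\partial\mathcal{Q}^-\subset\overline{R^-}$ you infer $\mathcal{Q}^-\subset\overline{R^-}$, but this inclusion can fail when $\mathcal{Q}^-$ has nonempty interior. Indeed a point in $\interior(R^0)$ has a whole neighborhood where $R\equiv 0$, hence does not lie in $\overline{R^-}$, and the regularity assumption \eqref{hpregularity} only constrains $R$ near $\overline{R^+}\cap\overline{R^-}$, not on $\mathcal{Q}^\pm$, so $\interior(\mathcal{Q}^-)\neq\emptyset$ is not excluded. The estimate you actually need, $\dist(\mathcal{Q}^-,\mathcal{Q}^+)\ge 3r$, is nevertheless true and follows directly without that inclusion: any segment from a point of $\mathcal{Q}^-$ to a point of $\mathcal{Q}^+$ must leave $\mathcal{Q}^-$ through some point of $\partial\mathcal{Q}^-\subset\overline{R^-}$, and the portion of the segment from that point to $\mathcal{Q}^+$ already has length at least $\dist(\mathcal{Q}^+,\overline{R^-})\ge 3r$. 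With this small patch your argument is complete and matches the paper's.
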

\begin{proof}
Let us consider an open regular subset $\Omega$ of $R^-$ such that $\overline\Omega\subset R^-$ and $\Omega\supset (\mathcal{Q}^-)^{2r}$. Proposition~\ref{propChLi1} asserts that $u(x)\leq C$ for any $x\in \partial\Omega$. Then, by our assumptions we have that $-\Delta u \leq 0$ in $\Omega$ and $u\leq C$ in $\partial \Omega$. Applying the weak maximum principle, we reach the desired conclusion.
\end{proof}

\subsection{Non existence result for the \emph{regular} Nirenberg problem}
\

We recall a non existence result for the problem \eqref{CLp}, obtained in \cite{ChenLins} to prove that the \emph{regular} Nirenberg problem \eqref{Nirenberg} does not admit solution for $K$ axially symmetric, sign-changing and monotone in the region where $K$ is positive.

Let $r_0>0$ and $R \in C^{0}_{rad}(\mathbb{R}^2)$ such that
\beq\label{NE}
\mbox{$R$ is positive and non-increasing for $r<r_0$, negative for $r>r_0$}.
\eeq

The following result has been derived under the hypothesis $\eqref{agN}$, however, as shown below, it holds under the less restrictive assumption \eqref{alphabeh}.
\begin{theorem}\label{nethm:ChenLi}
Assume that $R\in C^{0}_{rad}(\mathbb{R}^2)$ is a bounded function verifying \eqref{NE}. Then there is no solution for the problem $\eqref{CLp}$ such that $\eqref{alphabeh}$ holds.
\end{theorem}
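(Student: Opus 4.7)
Our strategy is to follow the approach of Chen--Lin in \cite{ChenLins}, which was carried out for $\eta=4$, and to observe that the same scheme yields a contradiction for every $\eta>4$. The starting ingredient is a Pohozaev-type identity: multiplying \eqref{CLp} by $x\cdot\nabla u$, integrating over a ball $B_\rho$ and sending $\rho\to\infty$ with the help of the asymptotic expansion \eqref{alphabeh}, the boundary terms combine with the zero-order identity $\int_{\R^2}Re^u\,dx=2\pi\eta$ (obtained by integrating \eqref{CLp} directly and using $\partial_\nu u\sim -\eta/\rho$) to yield
\[
\int_{\R^2}(x\cdot\nabla R)\,e^u\,dx=\pi\eta(\eta-4).
\]
For $\eta=4$ this is the classical Kazdan--Warner identity, while for $\eta>4$ the right-hand side is \emph{strictly positive}.

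Next we claim that $u$ must be radially symmetric about the origin. Since $R$ is radial and \eqref{alphabeh} is rotation-invariant, the moving plane method of \cite{ChenLins} applied along every line through the origin adapts without modification: the faster decay of $u$ provided by $\eta>4$ (compared with $\eta=4$) only makes the asymptotic comparison of the reflected function with $u$ easier, so all the conditions required to close the moving planes remain in force. Once $u$ is radial, the equation becomes the ODE $(ru')'=-rR(r)e^{u(r)}$, and the integral identity above reduces, after writing $\int_{\R^2}(x\cdot\nabla R)e^u\,dx=2\pi\int_0^\infty R'(r)r^2 e^{u(r)}\,dr$, to
\[
\int_0^{\infty} R'(r)\, r^2 e^{u(r)}\,dr \;=\; \frac{\eta(\eta-4)}{2} \;>\; 0.
\]

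The final step is to exhibit the same integral as non-positive, producing the contradiction. On $(0,r_0)$ the integrand is non-positive because $R'\le 0$ by \eqref{NE}. On $(r_0,\infty)$ we integrate by parts---the boundary contributions vanish since $R(r_0)=0$ and $Rr^2 e^u\to 0$ at infinity for $\eta>2$---to rewrite the tail as $-\int_{r_0}^{\infty} R(r)(r^2 e^u)'\,dr$. Setting $v(r)=ru'(r)$, the radial form of \eqref{CLp} gives the representation $v(r)=-\tfrac{1}{2\pi}\int_{B_r}Re^u\,dx$; since $R<0$ on $\R^2\setminus B_{r_0}$, the tail integral of $Re^u$ over $\R^2\setminus B_r$ is non-positive for $r\ge r_0$, and hence $v(r)\le v(+\infty)=-\eta$ on $[r_0,\infty)$. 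Consequently $(r^2 e^u)'=(2+v)\,r e^u<0$ there, and combined with $R<0$ the tail integrand $-R(r^2 e^u)'$ is also non-positive. Summing both contributions, the whole integral is $\le 0$, contradicting the strict positivity of $\tfrac{\eta(\eta-4)}{2}$. The most delicate point is the derivation of radial symmetry via moving planes; once that step is granted, the Pohozaev-type identity together with the monotonicity assumption \eqref{NE} closes the argument.
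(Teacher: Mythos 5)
Your proposal takes a genuinely different route from the paper's, and it does not actually follow Chen--Lin: the paper (following Lemma~2.1 of \cite{ChenLins}) uses a \emph{moving-spheres} comparison built from the Kelvin transform (Lemma~\ref{lemnon}), obtaining a pointwise contradiction directly from the PDE without ever invoking a Pohozaev identity or radial symmetry of $u$. Your scheme---Pohozaev identity, then radial symmetry by moving planes, then an integral estimate---is an alternative strategy, but as written it has two gaps that your argument does not close.

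First, there is a regularity gap. The theorem only assumes $R\in C^0_{\mathrm{rad}}(\R^2)$ with $R$ non-increasing on $(0,r_0)$ and negative on $(r_0,\infty)$. Both the Pohozaev computation $\int_{\R^2}(x\cdot\nabla R)e^u\,dx=\pi\eta(\eta-4)$ and the integration by parts that converts the tail into $-\int_{r_0}^\infty R\,(r^2e^u)'\,dr$ require $\nabla R$ (equivalently $R'$) to exist in a usable sense; a continuous non-increasing function need not even be absolutely continuous. The moving-spheres proof in the paper avoids this entirely, since Step~2 of Lemma~\ref{lemnon} uses only the monotonicity inequality $R(\mu/|x|)\le R(\mu|x|)$ and the sign conditions on $R$, never a derivative. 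This is precisely why \eqref{NE} can be stated with so little regularity.

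Second, the radial symmetry step is asserted rather than proved, and the assertion is questionable. You appeal to ``the moving plane method of \cite{ChenLins}'', but Chen--Lins do not carry out moving planes, nor do they prove radial symmetry of $u$; they use moving spheres. For a Gidas--Ni--Nirenberg-type moving-plane argument to close, one normally needs $R(|x|)$ to satisfy a reflection inequality across each hyperplane, which amounts to radial monotonicity of $R$. But \eqref{NE} gives no monotonicity of $R$ on $(r_0,\infty)$---$R$ is only required to be negative there---so there is no reason $u$ has to be radial, and the passage to the ODE $(ru')'=-rRe^u$ and the quantity $v(r)=ru'(r)$ is not available. Without that, the tail estimate cannot be run.

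On the positive side, granting both $R\in C^1$ and $u$ radial, the rest of your argument does close: the Pohozaev constant $\pi\eta(\eta-4)$ is correct (and reduces to Kazdan--Warner when $\eta=4$), the identity $v(r)=-\tfrac1{2\pi}\int_{B_r}Re^u$ together with $R<0$ on $\{r>r_0\}$ gives $v\le-\eta<-2$ there, hence $(r^2e^u)'<0$, and the two contributions are indeed non-positive. But as it stands the argument proves the theorem only under substantially stronger hypotheses than those in the statement, so it does not replace the paper's moving-spheres proof.
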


The key point to derive this generalized result is to modify properly Lemma~2.1 in \cite{ChenLins}, taking into account the new asymptotic behavior.

\begin{lemma}\label{lemnon}
Let $R\in C^0_{rad}(\R^2)$ be a bounded function such that
\beq\label{lemnoneq}
R(r)>0, \,  R'(r)\leq 0 \, \mbox{ for } r<1; \quad R(r)\leq 0 \mbox{ for } r\geq 1.
\eeq
Let $u$ be a solution of \eqref{CLp} such that \eqref{alphabeh} holds, then
\beq\label{tlemnon}
u(\mu x) > u\left(\frac{\mu x}{|x|^2}\right)-\eta\log|x| \quad \forall x \in B_0(1), \quad 0<\mu\leq 1.
\eeq
\end{lemma}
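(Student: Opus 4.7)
The plan is to introduce the auxiliary function
\[
W_\mu(x) := u(\mu x) - u\bigl(\mu x/|x|^2\bigr) + \eta \log|x|
\]
on $B_0(1)\setminus\{0\}$ and prove $W_\mu > 0$ there by a maximum-principle argument. Clearly $W_\mu \equiv 0$ on $\partial B_0(1)$, and the asymptotic \eqref{alphabeh} forces $u(\mu x/|x|^2) + \eta \log(\mu/|x|)$ to stay bounded as $|x|\to 0^+$, so $W_\mu(x) = u(\mu x) + \eta \log\mu + O(1)$ remains bounded near the origin. Using the two-dimensional conformal identity $\Delta(u\circ T)(x)= \mu^2 |x|^{-4}(\Delta u)(T(x))$ for $T(x)=\mu x/|x|^2$ together with \eqref{CLp}, a direct calculation gives, on $B_0(1)\setminus\{0\}$,
\[
-\Delta W_\mu(x) = \mu^2 e^{u(\mu x)}\bigl[R(\mu|x|) - |x|^{\eta-4} R(\mu/|x|)\, e^{-W_\mu(x)}\bigr].
\]

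Two elementary observations, both specific to $|x|<1$ and $\mu \le 1$, drive the argument. First, since $\mu|x|\le \mu/|x|$, assumption \eqref{lemnoneq} yields $R(\mu|x|)\ge R(\mu/|x|)$: if $\mu/|x|\ge 1$ then $R(\mu/|x|)\le 0 < R(\mu|x|)$, and otherwise both arguments lie in $(0,1)$ where $R'\le 0$. Second, $|x|^{\eta-4}\le 1$ because $\eta>4$. Rewriting the bracket as
\[
\bigl[R(\mu|x|) - |x|^{\eta-4} R(\mu/|x|)\bigr] + |x|^{\eta-4} R(\mu/|x|)\bigl(1 - e^{-W_\mu}\bigr)
\]
splits the PDE into a nonnegative term plus a remainder proportional to $1-e^{-W_\mu}$. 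On the \emph{core} disk $\{|x|\le\mu\}$ one has $|x|^{\eta-4} R(\mu/|x|)\le 0$, and a sign check shows $-\Delta W_\mu >0$ outright, so that $W_\mu$ is strictly superharmonic there. On the \emph{annulus} $\{\mu<|x|<1\}$ the coefficient $|x|^{\eta-4}R(\mu/|x|)$ is nonnegative and, using the boundedness of $(1-e^{-s})/s$, one obtains a linear differential inequality $-\Delta W_\mu + c(x)W_\mu\ge 0$ with $c\in L^\infty$.

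I would then apply the weak maximum principle on the annulus to propagate $W_\mu=0$ from $|x|=1$ inward to $W_\mu\ge0$ on $\{\mu\le|x|<1\}$, and then the superharmonicity on the core, together with the bounded (hence removable) behaviour at the origin, to obtain $W_\mu\ge 0$ on all of $B_0(1)\setminus\{0\}$. Strict positivity is then delivered by the strong maximum principle. The main obstacle is the singular point $x=0$: one must verify that the finite limit $\lim_{x\to 0}W_\mu(x)=u(0)+\eta\log\mu+O(1)$ is compatible with the superharmonic extension across the removable singularity, so that the interior minimum of $W_\mu$ cannot leak out through the puncture. This is precisely where the precise form of \eqref{alphabeh} and the strict inequality $\eta>4$ (which makes $|x|^{\eta-4}\le 1$ and controls the decay at infinity of the Kelvin transform) enter essentially, paralleling the corresponding step in the original argument of \cite{ChenLins} carried out for $\eta=4$.
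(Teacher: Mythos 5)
Your setup is the same as the paper's: the function $W_\mu$ you introduce is exactly the paper's $w_\mu=u_\mu-v_\mu$ (up to the additive normalization $2\log\mu$), the PDE you derive for it is correct, and the two pointwise inequalities $R(\mu|x|)\ge R(\mu/|x|)$ and $|x|^{\eta-4}\le1$ are precisely the ones the paper uses. The argument on the core $\{|x|\le\mu\}$, where $R(\mu/|x|)\le0$ makes $W_\mu$ strictly superharmonic, is also fine; note that for $\mu=1$ the whole ball is ``core'', which is exactly the paper's Step~1.

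The gap is on the annulus $\{\mu<|x|<1\}$. There you reduce to a linear inequality $-\Delta W_\mu+c(x)W_\mu\ge0$ with $c\in L^\infty$, but the coefficient you obtain, namely $c=-\mu^2e^{u(\mu x)}|x|^{\eta-4}R(\mu/|x|)\,\frac{1-e^{-W_\mu}}{W_\mu}$, is \emph{nonpositive} (since $R(\mu/|x|)>0$ there), and the weak maximum principle does not hold for $-\Delta+c$ with $c\le0$ of unrestricted size: take $c=-\lambda_1$ and $W=-\phi_1$ with $\phi_1$ the first Dirichlet eigenfunction to see that zero boundary data does not force nonnegativity. Moreover your annulus has an inner boundary $\{|x|=\mu\}$ on which $W_\mu$ is not known to be nonnegative a priori, so even the boundary data for the proposed application is incomplete. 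This is precisely the obstruction the paper's proof is built to avoid: it first proves the claim at $\mu=1$ (where no wrong-sign zeroth-order term appears), and then \emph{slides} $\mu$ downward, arguing by contradiction at the first critical value $\mu_0$ via the strong maximum principle and the Hopf lemma applied to $w_{\mu_0}\ge0$ (for which no sign condition on $c$ is needed once nonnegativity is already known), together with a sequence $\mu_n\nearrow\mu_0$ and a normal-derivative comparison on $\partial B_0(1)$. Without this continuity-in-$\mu$ (moving spheres) mechanism, or some substitute such as a smallness/spectral condition on the coefficient, your fixed-$\mu$ maximum-principle step does not go through. The discussion of the removable singularity at the origin is a secondary point and is handled correctly.
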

\begin{proof}
\
\textbf{Step 1:} We claim that \eqref{tlemnon} is true for $\mu=1$.\\
Let $v(x)=u\left(\frac{ x}{|x|^2}\right)-\eta\log|x|$, then $v$ verifies
$$
-\Delta v=|x|^{\eta-4}R\left(\frac{1}{|x|}\right)e^v.
$$
By \eqref{lemnoneq}, $\Delta u < 0$ and $\Delta v \geq 0$ in $B_0(1)$, then $-\Delta(u-v)>0$ in $B_0(1)$. Since $u=v$ in $\partial B_0(1)$, then
$$
u>v \quad \mbox{ in } B_0(1)
$$
by using the maximum principle.

\textbf{Step 2:}
At this point, we move  $\partial B_{0}(\mu)$ towards $\mu=0$. Let $u_{\mu}(x)=u(\mu x) + 2\log \mu$ and $v_{\mu}(x)=u_{\mu}\left(\frac{ x}{|x|^2}\right)-\eta\log|x|$, then
$$
-\Delta u_{\mu} = R(\mu |x|) e^{u_{\mu}}, \quad -\Delta v_{\mu} = |x|^{\eta-4}R\left(\frac{\mu}{|x|}\right) e^{v_{\mu}}.
$$

Taking the auxiliary function $w_\mu=u_\mu-v_\mu$, we obtain that
\beq\label{tlemnon2}
\Delta w_{\mu} + |x|^{\eta-4}  R\left(\frac{\mu}{|x|}\right)e^{\phi_{\mu}(x)}w_{\mu}(x) =\left[ R\left(\frac{\mu}{|x|}\right)|x|^{\eta-4}- R (\mu |x|)\right] e^{u_{\mu}(x)}
\eeq
for $x\in B_{0}(1)$ where $\phi_\mu$ is a function between $u_{\mu}(x)$ and $v_{\mu}(x)$. Observe that by \eqref{lemnoneq}, we have that
\[
R\left(\frac{\mu}{|x|}\right)|x|^{\eta-4}-R(\mu|x|)\leq0,\quad \textnormal{ for   $|x|\leq1$  and  $\mu\leq1$.}
\]
Therefore
\begin{equation}\label{disuguaglianza}
\Delta w_\mu+C_\mu (x)w_\mu\leq0,
\end{equation}
where $C_\mu(x)$ is a bounded function if $\mu$ is bounded away from $0$, moreover for any $\mu$ strict inequality for \eqref{disuguaglianza} holds somewhere. Thus, applying the strong maximum principle, to get \eqref{tlemnon} it is enough to show that
\beq\label{tlemnon3}
w_{\mu}(x)\geq 0 \quad \mbox{ in $B_{0}(1)$}.
\eeq
From Step 1 \eqref{tlemnon3} is true for $\mu=1$.
Next, we decrease $\mu$. By contradiction, suppose that there exists $\mu_0>0$ such that \eqref{tlemnon3} is true for $\mu\geq \mu_0$ and fails for $\mu<\mu_0$. For $\mu=\mu_0$ we can use the strong maximum principle and then the Hopf lemma in \eqref{tlemnon2} to obtain that
$$
w_{\mu_0}>0 \; \mbox{ in } B_0(1) \quad \mbox{ and } \quad \frac{\partial w_{\mu_0}}{\partial r } <0 \; \mbox{ on } \partial B_0(1).
$$

In addition, by the minimality of $\mu_0$ for any sequence $\mu_n\nearrow\mu_0$ there exists $x_n \in B_0(1)$  verifying $w_{\mu_n}(x_n)<0$. This, combined with the fact that $w_{\mu_n}=0$ on $\partial B_0(1)$, implies that there exists some $y_n$ on the segment connecting $x_n$ and $\frac{x_n}{|x_n|}$ so that $\frac{\partial w_{\mu_n}}{\partial r } (y_n) >0$. Up to a subsequence $x_n\to x_0 \in \overline{B_0(1)}$ with $w_{\mu_0}(x_0)\leq 0$, so $x_0 \in \partial B_0(1)$ and $y_n\to x_0$. Thus $\frac{\partial w_{\mu_0}}{\partial r }(x_0) \geq 0$ and we get the desired contradiction. Therefore \eqref{tlemnon3} holds for any $\mu\in(0,1]$.
\end{proof}

\begin{proof}[Proof of Theorem~\ref{nethm:ChenLi}]
By virtue of \eqref{NE} we have that \eqref{lemnoneq} holds, so applying Lemma~\ref{lemnon} we get \eqref{tlemnon}. Letting $\mu \to 0$ in \eqref{tlemnon} we obtain that $\log |x| > 0 $ for $|x|<1$ which is a contradiction.
\end{proof}

\section{An estimate in the region where $K$ is small and a compactness result}\label{Scom}
\setcounter{equation}{0}

The main result of this section is the following compactness theorem for solutions of \eqref{equation} with $(\Sigma,g)=(\Sd,g_0)$.

\begin{theorem}\label{thm:compactness}
Let $p_1,\ldots,p_m\in\Sd$, $\alpha_1,\ldots,\alpha_m>0$ and let $K$ be a Lipschitz function on $\Sd$ satisfying \ref{H1}, \ref{H2} and \ref{H3}. Let $k\in \mathbb{N}$,
$$
\displaystyle{\lambda_0\in\left(8\pi k,8\pi(k+1)\right)\setminus\Gamma(\diva_\ell)},
$$
$\lambda_n\to\lambda_0$ and $u_n$ a sequence in $X$ of solutions of \eqref{equation} with $(\Sigma,g)=(\Sd,g_0)$ and $\lambda=\lambda_n$. Assume that $I_{\lambda_n}(u_n)$ is bounded from above. Then, up to a subsequence, $u_n-\log\int_{\Sd}\tilde K e^{u_n}dV_{g_0}\to u_0$ strongly in $C^2(\Sd)$, where $u_0$ is a solution of \eqref{equation} with $(\Sigma,g)=(\Sd,g_0)$ and $\lambda=\lambda_0$.
\end{theorem}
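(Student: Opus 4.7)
The plan is to combine the Chen--Li type upper bound of Theorem \ref{thm:ChenLi} in the region where $K$ is non-positive with a singular Brezis--Merle concentration--compactness analysis inside $\overline{S^+}$, then to rule out blow-up via mass quantization and the standing assumption $\lambda_0\notin\Gamma(\diva_\ell)$. First I would normalize: replace $u_n$ by $v_n:=u_n-\log\int_{\Sd}\tilde K e^{u_n}\,dV_{g_0}$, which still solves \eqref{equation} at $\lambda=\lambda_n$ (additive constants are immaterial) and satisfies $\int_{\Sd}\tilde K e^{v_n}\,dV_{g_0}=1$. The thesis then reduces to a uniform $C^2$ bound for $\{v_n\}$.

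Next I would establish a uniform $L^\infty$ upper bound for $v_n$ on the region $\{K\leq\varepsilon\}$. Picking $q\in S^-$ and stereographically projecting from $q$, and setting $w_n(y):=v_n(x(y))+2\log\tfrac{2}{1+|y|^2}$, the equation becomes
\[
-\Delta w_n=\lambda_n\tilde K(y)e^{w_n}+\frac{8-\lambda_n/\pi}{(1+|y|^2)^2}\qquad\textnormal{in}\ \RD,
\]
with $w_n\sim-4\log|y|$ at infinity. Hypothesis \ref{H2} guarantees that $R_n=\lambda_n\tilde K$ is $C^{2,\alpha}$ with $|\nabla R_n|$ bounded away from zero on its nodal line and with $\mathcal Q^\pm=\emptyset$, while \ref{H3} keeps every conical singularity off that nodal line; the bounded, $|y|^{-4}$-decaying inhomogeneous term can be absorbed in the maximum principle step of Lemma \ref{lem21}. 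Theorem \ref{thm:ChenLi} would then give constants $\varepsilon,C>0$ independent of $n$ with $v_n\leq C$ on $\{K\leq\varepsilon\}$.

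The key concentration--compactness step comes next. The upper bound together with the normalization yields $\int_{\Sd}|\tilde K|e^{v_n}\leq C$, so the right hand side of \eqref{equation} is bounded in $L^1(\Sd)$. Applying the singular Brezis--Merle alternative of \cite{bt}, up to a subsequence one of the following occurs: (i) $\{v_n\}$ is locally bounded on $\Sd\setminus\{p_1,\dots,p_\ell\}$; (ii) $v_n\to-\infty$ uniformly on compacta; or (iii) blow-up along a finite set $\mathcal S$, which by the previous step must lie in $\overline{S^+}\setminus\partial S^+$. Option (ii) is immediately incompatible with $\int\tilde K e^{v_n}=1$. In (iii), Li--Shafrir quantization at regular points of $\mathcal S$ and the Bartolucci--Tarantello singular quantization at $\mathcal S\cap\{p_1,\dots,p_\ell\}$ assign masses $8\pi$ and $8\pi(1+\alpha_j)$ respectively, and an energy--comparison argument (exploiting $v_n\to-\infty$ off $\mathcal S$, so the residual mass vanishes) produces
\[
\lambda_0=\sum_{x\in\mathcal S}m_x\in\Gamma(\diva_\ell),
\]
contradicting the hypothesis. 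Hence only (i) can hold.

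With (i) in force, combining with the bound on $\{K\leq\varepsilon\}$ gives $v_n$ uniformly bounded on $\Sd\setminus\{p_1,\dots,p_\ell\}$; near each $p_j$ the integrable weight $|x-p_j|^{2\alpha_j}$ and singular elliptic regularity yield $v_n$ bounded in $W^{2,p}$ for every $p<\infty$, and a bootstrap on the equation produces $C^2$ compactness to a limit $v_0$ that solves \eqref{equation} at $\lambda_0$. The hard part will be transplanting the concentration--compactness machinery of \cite{bt}, originally stated for positive $K$, to the present sign-changing setting: this is possible only because the Chen--Li bound of the second step has confined the blow-up set to $\overline{S^+}\setminus\partial S^+$, where $K$ is positive and the quantization is classical. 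This is precisely the role played by hypotheses \ref{H2} and \ref{H3}.
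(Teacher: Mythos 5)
Your first two steps (normalizing to $\int_{\Sd}\tilde Ke^{v_n}\,dV_{g_0}=1$ and invoking the Chen--Li type upper bound on $\{K\le\varepsilon\}$) coincide with the paper's, and your final quantization argument (Y.\ Y.\ Li at regular blow-up points, Bartolucci--Tarantello Theorem~6 at $p_j\in S^+$, yielding $\lambda_0\in\Gamma(\diva_\ell)$, a contradiction) is also what the paper does in its Step~4. Where you diverge is in the crucial intermediate reduction to ``atomic'' concentration.

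You propose to go through a localized singular Brezis--Merle/Bartolucci--Tarantello alternative on the open set where $K$ is bounded away from zero. The paper explicitly avoids this route, observing that the concentration--compactness result of \cite{bt} is stated for $K\ge 0$ and would have to be re-derived in the sign-changing setting. Instead, the paper uses the energy hypothesis (which your proposal never invokes) in an essential way: it introduces the nonnegative truncation $\tilde H$ of $\tilde K$ obtained by setting $\tilde H=\varepsilon$ on $S_\varepsilon$, the comparison functional
$E_\lambda(u)=\tfrac12\int|\nabla u|^2+\tfrac{\lambda}{|\Sd|}\int u-\lambda\log\int\tilde He^u$, and the inequality $I_{\lambda_n}(u_n)\ge E_{\lambda_n}(u_n)-\lambda_n M$. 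If $\|u_n\|_{H^1}\to\infty$ but the measures $\tilde He^{u_n}/\int\tilde He^{u_n}$ do not concentrate near $k$ points, then the improved Moser--Trudinger inequality (Proposition~\ref{prop:ChenLiIneq}, applicable because $\tilde H\ge0$) forces $E_{\lambda_n}(u_n)\to+\infty$, contradicting the assumed upper bound on $I_{\lambda_n}(u_n)$. This gives finite concentration directly with the right number of points, with no blow-up machinery, and only afterwards does the paper use Green's representation and the local quantization results to close the argument.

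The gap in your proposal is precisely the item you flag as ``the hard part'' and do not carry out: a localized version of the singular Brezis--Merle alternative for $K$ sign-changing with conical singularities in the blow-up region, including ruling out residual mass and handling the boundary $\partial S^+$ of the localization. While I believe this can be made rigorous (the Chen--Li bound confines the blow-up set to a compact subset of $S^+$, and one can then work on a slightly enlarged domain where $\tilde K$ is uniformly positive away from the $p_j$), it is a nontrivial adaptation that the paper deliberately circumvents. Note also that your formulation of alternative (ii) is slightly off: the Brezis--Merle dichotomy would give $v_n\to-\infty$ only on compacta of the localized domain, and to conclude $\int_{\Sd}\tilde Ke^{v_n}\to0$ one still has to propagate this to all of $\Sd$ using the Chen--Li upper bound on $\{K\le\varepsilon\}$ together with the equation and the maximum principle. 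So your proposal is a plausible and arguably more general alternative route, but as written it leaves the central reduction step unproved, whereas the paper's comparison-functional argument is short, self-contained, and is the place where the hypothesis that $I_{\lambda_n}(u_n)$ is bounded from above actually enters.
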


It is worth to point out that the concentration compactness theorem due to Bartolucci-Tarantello \cite{bt}, applied for instance in \cite{BarDemMal,BarMal,MalchiodiRuizSphere}, is useless here because it requires $K$ to be non negative. Indeed, we have also to rule out the possibility that some blow up occurs in $S^0\cup S^-$, where we keep the notations introduced in Section \ref{Introduction}:
$$
S^{\pm}=\{x\in \Sd\,|\,K(x) \gtrless 0\}, \quad S^0=\{x\in \Sd\,|\,K(x)=0\}.
$$

To do so, we take profit of Theorem~\ref{thm:ChenLi} in order to obtain a priori bounds in a neighborhood of $S^0\cup S^-$. In particular, via a stereographic projection, we can prove the following.

\begin{theorem}\label{thm:ChenLinostro}
Assume that $K$ is a Lipschitz function on $\Sd$ satisfying \ref{H1}, \ref{H2} and \ref{H3}. Let $k\in \mathbb{N}$, then there exist $\eps$, $C>0$ (depending only on $\beta$, $\delta$, $\|K\|_{C^{2,\alpha}(V)}$), such that, for any $u$ solution of the equation \eqref{equation} with $(\Sigma,g)=(\Sd,g_0)$ and $\lambda\in(8\pi k,8\pi(k+1))$, satisfying $\int_{\Sd} \tilde K e^u dV_{g_0}=1$, then $ u\leq C$ in the region where $K\leq \eps$.\\
Notice that $C$ and $\eps$ do not depend on $\lambda$ nor on $u$.
\end{theorem}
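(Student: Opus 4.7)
The plan is to apply Theorem~\ref{thm:ChenLi} in the uniform form of Remark~\ref{rem:Rflambda} after transporting the problem from $\Sd$ to $\R^2$ via stereographic projection. I would begin by choosing a pole $P\in\interior(S^+)\setminus\{p_1,\ldots,p_m\}$ at positive distance from $\partial S^+$, and taking the stereographic projection $\pi:\Sd\setminus\{P\}\to\R^2$ with conformal factor $\rho(x)=4/(1+|x|^2)^2$. For $u$ solving \eqref{equation} with the normalization $\int_{\Sd}\tilde K e^u\,dV_{g_0}=1$, so that $-\Delta_{g_0}u=\lambda\tilde K e^u-\lambda/(4\pi)$, a direct computation using $\Delta_{g_0}=\rho^{-1}\Delta$ and $-\Delta\log\rho=2\rho$ shows that the transformed function
\[
w(x):=u(\pi^{-1}(x))+\frac{\lambda}{8\pi}\log\rho(x)
\]
exactly cancels the constant zero-order term and satisfies
\[
-\Delta w=R(x)\,e^w\ \text{ in }\R^2,\qquad R(x)=F_\eta(x)\cdot \tilde K(\pi^{-1}(x)),
\]
where $\eta:=\lambda/(2\pi)>4$ and $F_\eta$ is precisely the function appearing in Remark~\ref{rem:Rflambda}. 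Since $\pi^{-1}(x)\to P$ with $\tilde K(P)>0$, $u\circ\pi^{-1}$ stays bounded near infinity and $\log\rho(x)\sim-4\log|x|$, giving the asymptotic behavior $w(x)\sim-\eta\log|x|$ required by \eqref{alphabeh} and $R(x)|x|^{4-\eta}\to 2\pi\eta\cdot 4^{1-\eta/4}\tilde K(P)\in(0,+\infty)$, matching the coercivity condition in Remark~\ref{rem:Rflambda}.

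Next I would verify the structural hypotheses on $\tilde K\circ\pi^{-1}$, uniformly in $\lambda$ and $u$. Hypotheses \ref{H2} and \ref{H3} imply $\tilde K\circ\pi^{-1}\in C^{2,\alpha}(\pi(V))$ in a neighborhood of the nodal line $\pi(\partial S^+)$, and $|\nabla(\tilde K\circ\pi^{-1})|\geq\beta>0$ there, since $\nabla K\neq 0$ on $\partial S^+$, $e^{-h_m}$ is smooth and positive in a neighborhood of $\partial S^+$ (by \ref{H3}), and $D\pi^{-1}$ is nonzero and bounded on the bounded set $\pi(V)$. The isolated components of $\{R=0\}$ forming $\mathcal Q^\pm$ consist of the points $\pi(p_j)$ with $p_j\in S^\pm$ together with possible components of $\pi(S^0\setminus\partial S^+)$, all bounded away from $\pi(\partial S^+)$ by \ref{H3}; this yields a uniform $r>0$ as in \eqref{erre}. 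Finally, as $\eta$ ranges in the bounded interval $(4k,4(k+1))$, $F_\eta$ is uniformly bounded in $C^{2,\alpha}(\pi(V))$ and $\min_{\R^2}(F_\eta\cdot(\tilde K\circ\pi^{-1}))$ is uniformly bounded below (because $\tilde K$ is continuous and bounded on $\Sd$), so the hypotheses of Remark~\ref{rem:Rflambda} hold with constants independent of $\lambda$ and $u$.

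Remark~\ref{rem:Rflambda} then produces $\varepsilon',C'>0$, independent of $\lambda$ and $u$, such that $w\leq C'$ on $\{R\leq\varepsilon'\}\setminus(\mathcal Q^+)^r$. Since $\log\rho$ is uniformly bounded on any compact subset of $\R^2$, inverting the transformation yields $u\leq C$ on the preimage of the same region. Choosing $\varepsilon>0$ small so that $\pi(\{K\leq\varepsilon\})$ is bounded (true for $\varepsilon<K(P)$), contained in $\{R\leq\varepsilon'\}$ (using that $R\asymp K\circ\pi^{-1}$ on $\pi(\{K\leq\varepsilon\})$ with positive bounded constants, since one stays away from the $p_j$'s and from infinity), and disjoint from $(\mathcal Q^+)^r$ (since the latter lies in $\interior(S^+)$, where $K>0$), yields the desired bound $u\leq C$ on $\{K\leq\varepsilon\}\subset\Sd$.

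The principal obstacle will be maintaining uniformity of all constants in $u$ and $\lambda$: this is the reason for using the parameter-dependent Remark~\ref{rem:Rflambda} rather than Theorem~\ref{thm:ChenLi} directly, and it dictates the strategic placement of the stereographic pole. Placing $P\in\interior(S^+)$ simultaneously ensures $\tilde K(P)>0$ (needed for the coercivity at infinity), makes $\pi(\{K\leq\varepsilon\})$ a bounded set on which all the a priori estimates are clean, and keeps the excluded region $(\mathcal Q^+)^r$ well away from $\{K\leq\varepsilon\}$.
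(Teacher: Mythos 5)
Your proposal is correct and follows essentially the same route as the paper: stereographic projection from a pole in $S^+$, the change of variable producing $F_\eta$ with $\eta=\lambda/2\pi$, verification of the hypotheses of Remark~\ref{rem:Rflambda} uniformly in $\lambda\in(8\pi k,8\pi(k+1))$, and transfer of the resulting bound back to $\Sd$. The only difference is the final patching step: the paper performs a \emph{second} stereographic projection from another point of $S^+$ to cover the region near the first pole, whereas you shrink $\varepsilon$ below $K(P)$ so that $\{K\le\varepsilon\}$ is bounded away from the pole and the conformal factor $\log\rho$ stays bounded there --- a legitimate simplification (at the modest cost of tying $\varepsilon$ also to $K(P)$), and your parenthetical justification of $\inf_\eta\min_{\R^2}R\,F_\eta>-\infty$ via mere boundedness of $\tilde K$ should really invoke, as you do implicitly elsewhere, that $R$ is positive near infinity while $F_\eta$ is uniformly bounded on the bounded set $\pi(\overline{S^-})$ where $R$ can be negative.
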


\begin{remark}
Since the equation \eqref{equation} is invariant under addition of constants, it is immediate to see that if $\int_{\Sd} \tilde K e^u dV_{g_0}$ is not fixed, the conclusion of the above theorem fails.
\end{remark}

\begin{proof}[Proof of Theorem~\ref{thm:ChenLinostro}]

Without loss of generality, suppose that $q_1=(0,0,1) \in S^+\setminus\{p_1,\ldots,p_\ell\}$. Let $P$ be the stereographic projection from $\Sd \setminus \{ q_1 \}$ to $\mathbb{R}^2$ defined by
\beq\label{SteoPro}
P(x_1,x_2,x_3)=(y_1,y_2), \quad y_i=\dfrac{x_i}{1-x_3}, \quad i=1,2.
\eeq

The inverse map $P^{-1}: \mathbb{R}^2 \mapsto \Sd \setminus\{q_1\}$ is
\beq\label{ISteoPro}
P^{-1}(y_1,y_2)=\dfrac{1}{1+\vert y \vert^2} (2y_1,2y_2,\vert y \vert^2-1).
\eeq

For any function $\psi$ on $\Sd$
$$\int_{\Sd} \psi(x) dV_g=\int_{\mathbb{R}^2}\psi(P^{-1}(y)) \dfrac{4}{(1+\vert y \vert^2)^2}dy.$$

Let $u$ be a solution of \eqref{equation}, we introduce the following variable change
\beq\label{cvar}
v(y)=u(P^{-1}(y))+\dfrac{\lambda}{8\pi}\log \left( \dfrac{4}{(1+\vert y \vert^2)^2} \right),
\eeq
then $v$ verifies
\beq\label{eqv}
-\Delta v  = \tilde{K}(P^{-1}(y))f_\lambda(y) e^{v}  \qquad \text{in $\mathbb{R}^2$,}
\eeq
with asymptotic growth at infinity
\beq\label{ieqv}
v \sim -\frac{\lambda}{2\pi}\log |y|,
\eeq
where
\beq\label{eqv1}
f_{\lambda}(y)=\lambda\left(\dfrac{4}{(1+|y|^2)^2}\right)^{1-\frac{\lambda}{8\pi}}.
\eeq

Let us set $R(y)=\tilde K(P^{-1}(y))$, moreover in the notations of Remark \ref{rem:Rflambda} $F_\eta=f_\lambda$ for $\eta=\frac{\lambda}{2\pi}\in(4k,4(k+1))$.\\ Let us first notice that the assumptions \ref{H1}, \ref{H2}, \ref{H3} on $K$ and \eqref{tildeK} guarantee that $R$ satisfies \eqref{hpregularity}. Besides, by our choice of $q_1$:
\[
\lim_{|y|\to+\infty} R(y) F_\eta(y) |y|^{4-\eta}\in(0,+\infty).
\]
Furthermore, by \ref{H2} and \ref{H3}, $\nabla R\neq0$ in $P(\partial S^+)$, then there exist $U'\subset \R^2$ and $\beta>0$ such that $P(\partial S^+)\subset U'\subset U$ and $|\nabla R|\geq\beta$ in $U'$.\\
Next, for $r$ defined in \eqref{erre} there exist $\delta>0$ such that if $|R(y)|\leq\delta$ then either $y\in U'$ or $y\in (\mathcal Q)^r$, hence clearly
\[
|\nabla R(y)|\geq\beta\qquad\mbox{for any $y\in\{y\in\R^2\mid |R(y)|\leq\delta\}\setminus (\mathcal Q)^r$.}
\]
Then by Remark \ref{rem:Rflambda} we have that there exist $\tilde C$ and $\tilde{\varepsilon}$ independent on $\lambda$ and $v$ such that
\[
v\leq\tilde C\qquad\mbox{in $\{y\in\R^2\mid R(y)\leq\tilde{\varepsilon}\}\setminus (\mathcal Q^+)^r$.}
\]
This in turn implies, by definition of $\tilde K$ and being $\mathcal{Q}^+=P(\{p_1,\ldots,p_\ell\})$, that there exist $\varepsilon>0$ independent on $\lambda$ and $v$ such that
\beq\label{vbounds}
v\leq \tilde C\qquad\mbox{in $\{y\in\R^2\mid K(P(y))\leq\varepsilon\}$.}
\eeq
At last in order to deduce the thesis we fix a point $q_2\neq q_1$ such that $q_2\in S^+\setminus\{p_1,\ldots,p_\ell\}$. 
It is immediate to see that \eqref{vbounds} combined with \eqref{cvar} gives that there exists $C>0$ such that
\[
u\leq C\qquad\mbox{in $\{x\in \Sd\mid K(x)\leq\varepsilon\}\setminus B_{q_1}(\frac13\dist(q_1,q_2))$.}
\]
To obtain the estimate in $B_{q_1}(\tfrac13\dist(q_1,q_2))$, eventually with a larger $C$ and a smaller $\eps$, it is enough to repeat the arguments replacing $q_2$ to $q_1$.

\end{proof}

\begin{proof}[Proof of Theorem~\ref{thm:compactness}]
Let us assume without loss of generality that $\int_{\Sd} \tilde K e^{u_n} dV_{g_0} =1$. If $u_n$ is bounded, up to a subsequence, $u_n\rightharpoonup u_0$. Standard elliptic arguments show that the convergence is strong and that $u_0$ is the required solution.\\
 Assume now that $\|u_n\|_{H^1(\Sd)}\to+\infty$, as $n\to+\infty$. By Theorem~\ref{thm:ChenLinostro}, there exist $\varepsilon, C>0$ such that $u_n\leq C$ in
$$
S_{\eps}=\{x \in \Sd \mid K(x)\leq \eps \}.
$$
Eventually taking a smaller $\varepsilon$, we can assume that $\{p_1,\ldots,p_\ell\}\cap S_\varepsilon=\emptyset$.

Let us set
$$
\tilde{H}(x)=\left\{\begin{array}{ll}
\tilde{K}(x)  \qquad & \text{in $S^+\setminus S_{\eps}$,}\\
\eps \qquad  & \text{in $S_{\eps}$.}
\end{array}\right.
$$

Observe that $\tilde{H}\geq 0$ and  $\tilde{H}(y)=0$ if only if $y \in \{p_1,\ldots,p_\ell\}$. Moreover, for our choice of $\varepsilon$, there exists $M\geq0$ such that $h_m(x)\geq-M$ for any $x\in S_{\eps}$ and so $\tilde{K}\leq e^M\,\tilde{H}$.

Now, inspired by Proposition~2.5 of \cite{RuizSoriano}, we define the comparison functional
$$
E_\lambda(u)=\frac12\int_{\Sd}|\nabla u|^2 dV_{g_0} +\frac\lambda{|\Sd|}\int_{\Sd} u\, dV_{g_0}-\lambda\log\int_{\Sd} \tilde{H}(x) e^u dV_{g_0}\qquad\textnormal{in $H^1(\Sd)$}.
$$
It is immediate to verify that
\beq\label{star}
I_{\lambda_n}(u_n) \geq E_{\lambda_n}(u_n)-\lambda_n\,M.
\eeq

We introduce the unit measures
$$
\mu_n=\frac{\tilde{H}e^{u_n}}{\int_{\Sd} \tilde{H}e^{u_n} dV_{g_0}},
$$
then, up to a subsequence, $\mu_n\rightharpoonup\mu$ in the sense of weak convergence of measures.

\

\textbf{Step 1:} $\mu=\sum_{i=1}^k t_i\delta_{q_i}$, for $k$ points ${q_i}\in \Sd$, $t_i \in \left[0,1\right]$ and $\sum_{i=1}^k t_i=1$.

\

Roughly speaking, the idea is that either $\mu_n$ concentrates near at most $k$ points or $\|u_n\|_{H^1(\Sd)}\leq C$, which would contradict our assumption.
To prove the claim we just need to show that there exist $k$ points $q_i$ in $\Sd$ such that for any $\gamma>0$, $r>0$ and for $n$ sufficiently large

\beq\label{*}
\frac{\int_{\Sd\setminus \cup_{i=1}^k B_{q_i}(r)}\tilde{H}e^{u_n} dV_{g_0}}{\int_{\Sd} \tilde{H}e^{u_n} dV_{g_0}}<\gamma \quad \mbox{up to a subsequence.}
\eeq

We suppose by contradiction that there exist $\gamma>0$, $r>0$ such that for any $k-$tuple $q_1,\dots,q_k \in \Sd$
$$
\frac{\int_{\Sd\setminus \cup_{i=1}^k B_{q_i}(r)}\tilde{H}e^{u_n} dV_{g_0}}{\int_{\Sd} \tilde{H} e^{u_n}dV_{g_0}}>\gamma.
$$
Then by Lemma \ref{lemma:criterion} there exist $\bar\gamma>0$ and $\bar r>0$, depending only on $\gamma$, $r$ and $\Sd$ (but not on $n$) and $k+1$ points $\bar q_{i,n}$, depending on $n$, satisfying
$$
\frac{\int_{B_{\bar{q}_{i,n}}(\bar r)}\tilde{H}e^{u_n}dV_{g_0}}{\int_{\Sd} \tilde{H}e^{u_n}dV_{g_0}}\geq\bar\gamma,\quad B_{\bar q_{i,n}}(2\bar r)\cap B_{\bar q_{j,n}}(2\bar r)=\emptyset, \mbox{ for $i,j=1,\dots,k+1$ and $j\neq i$}.
$$
We are now in position to apply Proposition \ref{prop:ChenLiIneq} obtaining the existence of a constant $C=C(\|\tilde H\|_\infty,\tilde\gamma,\bar r,\bar\gamma)$, such that
$$
\log\int_{\Sd}\tilde{H}e^{u_n} dV_{g_0}\leq\frac{1}{16(k+1)\pi-\tilde\gamma} \int_{\Sd}|\nabla u_n|^2 dV_{g_0}\,+\frac1{|\Sd|}\int_{\Sd} u_n\,dV_{g_0} +C,
$$
where $\tilde \gamma$ is chosen such that $a_n=\frac12-\frac{\lambda_n}{16(k+1)\pi-\tilde\gamma}\to a>0$. Thus finally
$$
E_{\lambda_n}(u_n)\geq a_n\int_{\Sd} |\nabla u_n|^2 dV_{g_0} -\lambda_n C,
$$
and so, since $\|u_n\|_{H^1(\Sd)}\to+\infty$, we have that $E_{\lambda_n}(u_n)\to +\infty$, but this is a contradiction against \eqref{star} and the fact that $I_{\lambda_n}(u_n)$ is bounded from above.

\

\textbf{Step 2:} $\mu=\sum_{i=1}^k t_i\delta_{q_i}$, for $k$ points ${q_i}\in S^+\setminus S_{\eps}$.

\

By our choice of $\eps$, $e^{u_n} \leq e^C$ in $S_{\eps}$, therefore the claim is proved.

\

\textbf{Step 3:} $\tilde{K}e^{u_n} \rightharpoonup \sum_{i=1}^k t_i\delta_{q_i}$.

\

We first notice that applying Theorem~\ref{thm:ChenLinostro}
\begin{eqnarray}\label{flower0}
\int_{\Sd} \tilde{H}e^{u_n}dV_{g_0} &=& \int_{\Sd} \tilde{K}e^{u_n} dV_{g_0}+ \int_{\Sd} (\tilde{H}-\tilde{K})e^{u_n} dV_{g_0}\nonumber\\
&=& 1 + \int_{S_{\eps}} (\tilde{H}-\tilde{K})e^{u_n} dV_{g_0} \leq 1 + |\Sd|(\eps + \|\tilde K\|_{L^\infty(\Sd)})e^C.
\end{eqnarray}
This fact, combined with Step 2, implies that
$$
\tilde{H} e^{u_n} \to 0 \quad \mbox{uniformly in $S_{\eps}$, as $n\to+\infty$},
$$
and in turn
\beq\label{flower}
e^{u_n} \to 0 \mbox{ and } \tilde{K}e^{u_n} \to 0\quad \mbox{ uniformly in $S_{\eps}$, as $n\to+\infty$}.
\eeq
Since,
$$
\tilde{K} e^{u_n}=\left\{\begin{array}{ll}
\left( \int_{\Sd} \tilde{H}e^{u_n}dV_{g_0} \right) \mu_n  \qquad & \text{in $S^+\setminus S_{\eps}$,}\\
\tilde{K}e^{u_n} \qquad  & \text{in $S_{\eps}$,}
\end{array}\right.
$$
by Step 2, \eqref{flower} and \eqref{flower0} we have that, up to a subsequence,
$$
\tilde{K} e^{u_n} \rightharpoonup \left(\lim_{n\to+ \infty} \int_{\Sd} \tilde{H}e^{u_n} dV_{g_0} \right) \sum_{i=1}^k t_i\delta_{q_i},
$$
with $q_i \in S^+\setminus S_{\eps}$ for $i=1,\dots,k$.

Thus finally, being $\int_{\Sd}\tilde{K}e^{u_n}dV_{g_0}=1$ and $\tilde H\geq\tilde K$, we get $\lim\limits_{n\to+ \infty} \int_{\Sd} \tilde{H}e^{u_n}dV_{g_0}=1$ completing the proof of Step 3.

\

\textbf{Step 4:} $\lambda_n \to \lambda_0 \in \Gamma(\diva_\ell) =\left\{8\pi r + \sum_{j=1}^{\ell}8\pi(1+\alpha_j)n_j\,|\,r\in \mathbb{N}\cup \{0\}, n_j\in\{0,1\}\right\}$.

\

By the previous steps we have:
$$
\lambda_n \tilde{K}(x) e^{u_n} \rightharpoonup \lambda_0 \sum_{i=1}^k t_i\delta_{q_i} \, \mbox{ in $\Sd$}.
$$

As in \cite{bt}, in order to characterize the possible values of $\lambda_0$ we use Green's representation formula on the solution $v_n=u_n-h_m$ of \eqref{eqdelta}, where $h_m$ is defined in \eqref{accame}. Observe that $\tilde{K}e^{u_n}=Ke^{v_n}$, so
\beq\label{concp}
\lambda_n Ke^{v_n} \rightharpoonup \lambda_0\sum_{i=1}^k t_i\delta_{q_i}, \, \mbox{ in $\Sd$}.
\eeq

Consequently, we derive that
$$
v_n-\frac{1}{|\Sd|}\int_{\Sd} v_n dV_{g_0}\to \sum_{i=1}^k t_i G(x,q_i)-h_m
$$
uniformly on a compact set of $\Sd\setminus\{ q_1,\ldots,q_k\}$, where $G(x,y)$ is the Green's function defined in \eqref{fgreen}.

Furthermore, the sequence $v_n-\frac{1}{|\Sd|}\int_{\Sd}v_n dV_{g_0}$ admits uniformly bounded mean oscillation on any compact subset of $\Sd\setminus \left(\{ q_1,\ldots, q_k\} \cup  \{p_1,\ldots,p_\ell\} \right)$. As a consequence, for every open subset $\Omega$ compactly contained in \linebreak$\Sd\setminus \left(\{ q_1,\ldots, q_k\} \cup  \{p_1,\ldots,p_\ell\} \right)$ there exists a uniform constant $C>0$ such that,
\beq\label{bmocond}
\displaystyle{\max_{\Omega} v_n - \min_{\Omega} v_n} \leq C.
\eeq

Finally, if $q_i\notin \{p_1,\ldots,p_\ell\}$, we can apply the local result of Y.Y.Li \cite{Li} to conclude that
\beq\label{quant1}
\lm_n \int_{B_{q_i}(r)}K(x) e^{v_n}  dV_{g_0} \to 8\pi,
\eeq
for every $r>0$ small enough.

Whereas in case that $q_i=p_j$ for some $i \in \{1,\ldots,k\}$ and $j \in \{1,\ldots,\ell\}$, since \eqref{bmocond} remains true, by Theorem~6 of \cite{bt} and \eqref{concp}, we get
\beq\label{quant2}
\lm_n \int_{B_{q_i}(r)}K(x) e^{v_n}  dV_{g_0} \to 8\pi(1+\alpha_j),
\eeq
for every $r>0$ small enough.

Thus, \eqref{quant1}, \eqref{quant2} and \eqref{concp} imply that
$$
\lm_0=\lim_{n\to+\infty}\lm_n= \lim_{n\to+\infty} \lm_n \int_{\Sd}K(x) e^{v_n} dV_{g_0} \in \Gamma(\diva_\ell).
$$

\

\textbf{Step 5:} Conclusion

\

By virtue of Step 4 we reach a contradiction with our assumptions. Therefore $u_n$ is bounded and as explained above, up to a subsequence, $u_n \to u_0 \in X$ which is a solution of \eqref{equation} for $\lambda=\lambda_0$.

\end{proof}

\section{Low sublevels of $I_\lambda$}\label{low}
\setcounter{equation}{0}
The aim of this Section is to define a map from low sublevels of $I_\lambda$ onto a non-contractible compact topological space and a reverse map from this space onto $I_\lambda$.\\
We will consider first the general case $\lambda\in(8\pi k,8\pi(k+1))$, $k\geq1$, and then we will focus on the case $k=1$ in which we will provide a more accurate characterization of the topology of the low sublevels which in the end will allow us to get existence of solutions also in some cases when $S^+$ is contractible.

\

We recall the notation: $I_{\lambda}^{a}=\{u\in X\,:\,I_\lambda(u)\leq a\}$.

\subsection{Construction of a continuous map \emph{from} low sublevels}
\subsubsection{\boldmath${\lambda\in(8\pi k,8\pi(k+1)), \; k\geq 1}$}
\

In the following we will use on ${(\Sd)}_k$, the set of formal barycenters of order $k$ on $\Sd$, see \eqref{formbar} for the definition, the metric given by $C^1(\Sd)^*$, which induces the same topology of the weak topology of distributions. Given $\sigma_1,\sigma_2 \in {(\Sd)}_k$ we will denote by $\dist(\sigma_1,\sigma_2)$ their distance and consistently with this convention we define the distance of an $L^1$ probability measure $f$ on $\Sd$ from a distribution $\sigma\in {(\Sd)}_k$ as
$$
\dist(f,\sigma)=\sup\left\{\left|\int_{\Sd} f\varphi\,dV_{g_0}-\langle\sigma,\varphi \rangle\right|\,:\,\|\varphi\|_{C^1(\Sd)}\leq 1\right\}
$$
where $\langle\sigma,\varphi\rangle$ stands for the duality product between $\mathcal D(\Sd)$ and the space of distributions. Recall that for $r>0$ and a subset $\Omega$ of $\Sd$ we set $(\Omega)^r=\{x\in \Sd \mid \dist(x,\Omega)<r\}$.

\begin{proposition}\label{prop:Psinoncontractible}
Let $\lambda\in(8\pi k,8\pi(k+1))$, $k\geq1$ and assume \ref{H1}, \ref{H2}, \ref{H3} to hold. Let $A_j$ be the $j$-th connected component of $S^+$, namely $S^+=\amalg_{j=1}^{N^+} A_j$, where the symbol $\amalg$ denotes the disjoint union.
\begin{itemize}
\item[\emph{(a)}] Let $P^{N^+}=\{\bar x_1,\ldots,\bar x_{N^+} \}\subset S^+\setminus\{p_1,\ldots,p_\ell\}$ with $\bar x_j\in A_j$ for any $j\in\{1,\ldots,N^+\}$, then for $L$ sufficiently large there exists a continuous projection
    $$
\Psi:I^{-L}_\lambda\to(P^{N^+})_k
$$
with the property that if $\frac{ e^{u_n}\chi_{S^+}}{\int_{S^+} e^{u_n}dV_{g_0}}\rightharpoonup\sigma$ for some $\sigma\in (P^{N^+})_k$, then $\Psi(u_n)\to\sigma$.
\item[\emph{(b)}] If $S^+$ has a connected component which is non-simply connected then for $L>0$ sufficiently large there exists a curve $\Gamma\subset S^+\setminus\{p_1,\ldots,p_\ell\}$ homeomorphic to $\mathbb{S}^1$ and a continuous projection
$$
\Psi:I^{-L}_\lambda\to\Gamma_k,
$$
with the property that if $\frac{ e^{u_n}\chi_{S^+}}{\int_{S^+} e^{u_n}dV_{g_0}}\rightharpoonup\sigma$ for some $\sigma\in\Gamma_k$, then $\Psi(u_n)\to\sigma$.
\end{itemize}
\end{proposition}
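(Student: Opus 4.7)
The plan is to combine a concentration estimate at low sublevels of $I_\lambda$ with a continuous projection onto formal barycenters. The first step shows that for $L$ sufficiently large and $u\in I_\lambda^{-L}$ the probability measure $f_u=\frac{e^u\chi_{S^+}}{\int_{S^+}e^u\,dV_{g_0}}$ is close, in the $C^1(\Sd)^*$ topology, to the set of unit measures supported on at most $k$ points of $\overline{S^+}$. The second step continuously projects such measures onto $(P^{N^+})_k$, respectively $\Gamma_k$.

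For the concentration step, I would use the shift invariance of $I_\lambda$ to normalize $\int_{\Sd} u\,dV_{g_0}=0$. Then $I_\lambda(u)\le-L$ forces $\lambda\log\int_{\Sd}\tilde K e^u\,dV_{g_0}\ge L+\tfrac12\int_{\Sd}|\nabla u|^2\,dV_{g_0}$, and since $\alpha_j>0$ makes $\tilde K\in L^\infty(\Sd)$ one obtains
\[
\log\int_{S^+}e^u\,dV_{g_0}\;\ge\;\tfrac{L}{\lambda}+\tfrac{1}{2\lambda}\int_{\Sd}|\nabla u|^2\,dV_{g_0}-C.
\]
Argue by contradiction: if along a sequence $u_n\in I_\lambda^{-L_n}$, $L_n\to+\infty$, there exist fixed $\gamma,r>0$ such that no $k$-tuple of balls of radius $r$ absorbs more than $1-\gamma$ of the mass of $f_{u_n}$, Lemma~\ref{lemma:criterion} produces $k+1$ pairwise disjoint balls each carrying mass $\ge\bar\gamma$ of $f_{u_n}$. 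Proposition~\ref{prop:ChenLiIneq}, applied with $\tilde H=\chi_{S^+}$ and $\ell=k$, then gives
\[
\log\int_{S^+}e^{u_n}\,dV_{g_0}\;\le\;\frac{1}{16(k+1)\pi-\eps}\int_{\Sd}|\nabla u_n|^2\,dV_{g_0}+C.
\]
Choosing $\eps$ so that $\tfrac{1}{2\lambda}-\tfrac{1}{16(k+1)\pi-\eps}>0$ (possible since $\lambda<8\pi(k+1)$) and combining the two estimates yields a uniform upper bound on $L_n$, a contradiction. Hence for every $\gamma,r>0$ there is $L$ such that on $I_\lambda^{-L}$ the measure $f_u$ is $(1-\gamma)$-concentrated in the union of $k$ balls of radius $r$.

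For case \emph{(a)}, hypotheses \ref{H2}-\ref{H3} yield pairwise disjoint open neighborhoods $U_j\supset\overline{A_j}$ disjoint from $\{p_{\ell+1},\ldots,p_m\}$ and a smooth partition of unity $\chi_0,\chi_1,\ldots,\chi_{N^+}$ with $\chi_j\equiv 1$ on $\overline{A_j}$ and $\supp\chi_j\subset U_j$ for $j\ge1$. Setting $m_j(u)=\int_{\Sd}\chi_j f_u\,dV_{g_0}$, the concentration step guarantees that, for $L$ large enough, $m_0(u)$ is arbitrarily small and at most $k$ of $m_1(u),\ldots,m_{N^+}(u)$ exceed a fixed threshold. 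A standard thresholding-and-renormalization deformation retraction then produces coefficients $(t_j(u))_{j\ge1}$ with $\sum_j t_j(u)=1$, $t_j(u)\geq0$ and at most $k$ nonzero entries, depending continuously on $u$; setting
\[
\Psi(u)\;=\;\sum_{j=1}^{N^+}t_j(u)\,\delta_{\bar x_j}
\]
gives the desired continuous map into $(P^{N^+})_k$. Weak-$*$ convergence $f_{u_n}\rightharpoonup\sigma\in(P^{N^+})_k$ implies $m_j(u_n)\to\sigma(\{\bar x_j\})$, so $\Psi(u_n)\to\sigma$.

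For case \emph{(b)}, I would pick a simple closed curve $\Gamma\subset A_{j_0}\setminus\{p_1,\ldots,p_\ell\}$ that is not null-homotopic in the non-simply-connected component $A_{j_0}$, and construct a continuous retraction $\pi:A_{j_0}\to\Gamma$ via a tubular neighborhood of $\Gamma$ together with a standard surface deformation. Extend $\pi$ continuously to $\overline{S^+}$ by collapsing each component $A_j$ with $j\neq j_0$ to a fixed point $q\in\Gamma$, and compose with the same renormalization step as in \emph{(a)}: the pushforward then delivers $\Psi(u)\in\Gamma_k$. The main technical obstacle in both \emph{(a)} and \emph{(b)} is the continuity of the retraction across transitions where the ranking of the largest $m_j$'s changes; this is handled by the concentration estimate, which on $I_\lambda^{-L}$ (with $L$ large) provides a uniform gap between negligible and significant masses of $f_u$, so a standard gluing argument makes the thresholding step continuous.
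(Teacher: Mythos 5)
Your concentration step is sound and is essentially the estimate the paper itself uses (cf.\ Step~1 of the proof of Theorem~\ref{thm:compactness}, transplanted to $\tilde H=\chi_{S^+}$). For part \emph{(a)} your route is genuinely different from the paper's and, with one caveat, works: the paper first invokes the continuous projection $\tilde\Psi:I_\lambda^{-L}\to(\Sd)_k$ of \cite{Djad} (Lemma~4.9 there), then pushes the atoms into $(S^+)^\delta$ by a cutoff renormalization, and finally collapses each $(A_j)^\delta$ to $\bar x_j$; you bypass the barycenter projection entirely by recording only the component masses $m_j(u)$, which suffices because the target is a finite set of points. The caveat is that the ``thresholding-and-renormalization'' must restrict to the identity on the $(k-1)$-skeleton $(P^{N^+})_k$ of the simplex, e.g.\ $t_j=\bigl(m_j-m_{(k+1)}\bigr)^+/\sum_i\bigl(m_i-m_{(k+1)}\bigr)^+$ with $m_{(k+1)}$ the $(k+1)$-th largest mass. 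A hard threshold at a fixed level $c>0$ is continuous and lands in $(P^{N^+})_k$, but if $f_{u_n}\rightharpoonup\sigma=\sum_j s_j\delta_{\bar x_j}$ it gives $\Psi(u_n)\to\sum_j\frac{(s_j-c)^+}{\sum_i(s_i-c)^+}\,\delta_{\bar x_j}\neq\sigma$, so the stated convergence property fails; make the skeleton-fixing choice explicit.

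Part \emph{(b)} has a genuine gap. There the target $\Gamma_k$ records the \emph{positions} of the concentration points inside a single non-simply-connected component, so ``the same renormalization step as in \emph{(a)}'' (which only sees mass per component) cannot produce an element of $\Gamma_k$ with the right limit, and the pushforward $\pi_{*}f_u$ of $f_u$ under a retraction $\pi$ onto $\Gamma$ is an absolutely continuous measure on $\Gamma$, not a sum of at most $k$ Dirac masses. What is missing is exactly the hard part: a \emph{continuous} selection $u\mapsto\sum_i s_i(u)\delta_{y_i(u)}\in(\Sd)_k$ from the mere information that $f_u$ is $C^1(\Sd)^*$-close to the set of $k$-barycenters. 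Closeness to that set does not yield a projection onto it by soft arguments ($(\Sd)_k$ is not convex and the nearest-point map is multivalued); this is the content of the Djadli--Malchiodi construction, which the paper quotes as its Step~1 and only then composes with the retraction $(S^+)^\delta\to\Gamma$ built via stereographic projection and the Jordan--Sch\"onflies theorem. You must either quote that projection or reconstruct it before pushing forward onto $\Gamma$; once you do, your retraction of $A_{j_0}$ onto an essential circle (which the paper realizes concretely as a radial retraction of $\R^2$ minus a point of an enclosed complementary component) completes the argument.
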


\begin{remark}\label{rem:PN+noncontr}
It is worth to point out that under assumption \ref{H4}, both $(P^{N^+})_k$ and $\Gamma_k$ are non contractible.

Indeed, the set $(P^{N^+})_k$ is the $(k-1)$-skeleton of a $(N^+-1)$-symplex and then it can be easily seen that it is not contractible if and only if $k<N^+$ (see for example Exercise 16 in Section 2.2 of \cite{Hatcher}). Whereas, to show that $\Gamma_k$ is non contractible, we can refer to \cite{BarDemMal} in which it is proved that the homology group $H_1(\Gamma;\mathbb{Z})\neq0$.

%
\end{remark}

\begin{proof}
\textbf{Step 1:} There exists $L>0$ sufficiently large and a continuous map
\[
\tilde\Psi: I_{\lambda}^{-L}\to {(\Sd)}_k,
\]
satisfying the following property:
\begin{itemize}
\item[\emph{(i)}] if $\frac{ e^{u_n}\chi_{S^+}}{\int_{S^+} e^{u_n}dV_{g_0}}\rightharpoonup\sigma$ for some $\sigma\in {(\Sd)}_k$, then $\tilde\Psi(u_n)\to\sigma$.
\end{itemize}
This follows directly from the proof of Lemma 4.9 of \cite{Djad}, just observing that $\tilde K\leq \max_{\Sd}(\tilde K)\chi_{S^+}$.

\textbf{Step 2:} For $L>0$ sufficiently large there exists a continuous map
\[
\tilde\Psi_{\delta}: I_{\lambda}^{-L}\to ((S^+)^\delta)_k
\]
with the property that if $\frac{ e^{u_n}\chi_{S^+}}{\int_{S^+} e^{u_n}dV_{g_0}}\rightharpoonup\sigma$ for some $\sigma\in (S^+)_k$, then $\tilde\Psi_{\delta}(u_n)\to\sigma$.

Let $X$ be a topological space and $Z$ a subspace of $X$, we recall that a continuous map $r:X\to Z$ is a deformation retraction of $X$ onto $Z$ if $r(z)=z$ for all $z\in Z$ and its composition with the inclusion is homotopic to the identity map on $X$. Clearly if such a map exists $X$ is homotopically equivalent to $Z$.

Let us fix $\delta>0$ sufficiently small such that $(A_i)^\delta\cap(A_j)^\delta=\emptyset$ for any $i\neq j$, where $A_1,\ldots,A_{N^+}$ are the connected components of $S^+$, and that $(S^+)^\delta$ is homotopically equivalent to $S^+$. This choice is possible by \ref{H2}. Moreover, making $\delta$ smaller, if necessary, we can suppose that $\partial(S^+\setminus (\partial S^+)^\delta)$ admits at each point a normal vector varying continuously and, furthermore, that $S^+\setminus (\partial S^+)^\delta$ is homotopically equivalent to $S^+$ and there exists
\[
r_0:(S^+)^\delta\to S^+\setminus (\partial S^+)^\delta
 \]
which is a deformation retraction of each $(A_i)^\delta$ onto $A_i\setminus(\partial A_i)^\delta$ and so of $(S^+)^\delta$ onto $S^+\setminus (\partial S^+)^\delta$. Besides by \ref{H3} we can suppose that $\{p_1,\ldots,p_\ell\}\subset S^+\setminus (\partial S^+)^\delta$.

Let us consider a cutoff function $\xi_{\delta}\in C^1(\Sd)$ such that
\[
\xi_\delta: \Sd \to[0,1],\quad \xi_{\delta |\overline{S^+}}\equiv0,\quad \xi_{\delta |\Sd\setminus(S^+)^\delta}\equiv1.
\]
Reasoning as in Step 1 of the proof of Theorem~\ref{thm:compactness} it is possible to show that if $L$ is sufficiently large
\begin{equation}\label{estimatedist}
\dist\left(\tilde\Psi(u),\frac{e^u\chi_{S^+}}{\int_{S^+}e^u\,dV_{g_0}}\right)\leq\frac1{4\|\xi_\delta\|_{C^1(\Sd)}}.
\end{equation}
Next, we want to show that for $L$ large enough and for any $u\in I^{-L}_\lambda$ if $\tilde\Psi(u)=\sum_{i}t_i\delta_{x_i}$ then $\sum_{i}t_i\xi_{\delta}(x_i)\leq\frac12$.\\
Indeed, if not, by \eqref{estimatedist}
\begin{eqnarray*}
\frac 1{4\|\xi_\delta\|_{C^1(\Sd)}}&\geq&\dist\left(\tilde\Psi(u),\frac{ e^{u}\chi_{S^+}}{\int_{S^+} e^{u}\,dV_{g_0}}\right)\\
&\geq&\left|\langle\sum_i t_i\delta_{x_i},\frac{\xi_\delta}{\|\xi_\delta\|_{C^1(\Sd)}}\rangle-\frac{\int_{\Sd} e^u \chi_{S^+} \xi_\delta dV_{g_0}}{\|\xi_\delta\|_{C^1(\Sd)}\int_{S^+} e^u dV_{g_0} }\right|\\
&=&\sum_i t_i\frac{\xi_\delta(x_i)}{\|\xi_\delta\|_{C^1(\Sd)}}\geq \frac 1{2\|\xi_\delta\|_{C^1(\Sd)}}
\end{eqnarray*}
which is a contradiction.

Then $\sum_i t_i(1-\xi_\delta(x_i))>\frac 12$ for any $u\in I^{-L}_\lambda$ and if we set
\[
\tilde\Psi_\delta(u)=\frac{\sum_i t_i (1-\xi_\delta(x_i))\delta_{x_i}}{\sum_i t_i (1-\xi_\delta(x_i))},
\]
then $\tilde\Psi_{\delta}$ is well defined and continuous.
The second property follows immediately from Step 1, the definition of $\xi_{\delta}$ and the definition of $\tilde\Psi_\delta$.

\textbf{Step 3:} Conclusion.

\emph{(a)}.

We define
\[
\begin{array}{ccccr}
\tilde\pi:&(S^+)^\delta&\longrightarrow &P^{N^+} &\\
 &x&\longmapsto&\bar x_j&\qquad\forall\,x\in (A_j)^\delta,
\end{array}
\]
and we notice that
\beq\label{Pigrecotilde=Id}
\tilde\pi_{|P^{N^+}}=Id_{|P^{N^+}}.
\eeq
We can finally define the map $\Psi$ as follows
\[
\begin{array}{cccc}
\Psi:&I^{-L}_\lambda&\longrightarrow&(P^{N^+})_k\\
 &u&\longmapsto&\sum_i s_i\delta_{\tilde\pi(y_i)}
\end{array}
\]
where $s_i$ and $y_i$ are defined by $\tilde\Psi_\delta$, namely $\tilde\Psi_{\delta}(u)=\sum_i s_i\delta_{y_i}$.\\
Clearly by Step 2, \eqref{Pigrecotilde=Id} and the definition of $\Psi$ if $\frac{ e^{u_n}\chi_{S^+}}{\int_{S^+} e^{u_n}dV_{g_0}}\rightharpoonup\sigma\in (P^{N^+})_k$ then $\Psi(u_n)\to\sigma$.\\
This proves point \emph{(a)}.

\

\emph{(b)}.

Let $q$ be a point in the interior of $\Sd \setminus (S^{+})^{\delta}$ and let $P:\Sd\setminus \{q\} \to \R^2$ the stereographic projection, defined in \eqref{setRP}. By our choice of $\delta$ and the assumption on $S^+$ there exists a regular closed curve $\gamma \subset \R^2$ such that at least one connected component $\Omega^-$ of $P(\Sd\setminus (S^+)^{\delta})$ lies in its interior.

By the Jordan-Sch\"{o}nflies Theorem (see \cite{Schonflies}) there exists an homeomorphism $\Phi:\R^2\to\R^2$ such that $\Phi(\gamma)=S^1$ and the interior of $\gamma$ is mapped onto the interior of $S^1$.
In turn fixing a point $x^- \in \Omega^-$ we can define a retraction $R:\R^2\setminus \{\Phi(x^-) \} \to S^1$. Then denoting by $\Gamma:=P^{-1}(\gamma)$, define
\[
\begin{array}{ccccr}
\pi:&(S^+)^\delta&\longrightarrow &\Gamma &\\
 &x&\longmapsto& P^{-1}\circ\Phi^{-1}\circ R \circ \Phi \circ P(x)
\end{array}
\]
which is well defined by the choice of q. Moreover since $R_{|{S^1}}=\Id_{S^1}$ we have that
\beq\label{Pigreco=Id}
\pi_{|{\Gamma}}=\Id_{|\Gamma}.
\eeq

We can finally define the map $\Psi$ as follows
\[
\begin{array}{cccc}
\Psi:&I^{-L}_\lambda&\longrightarrow&\Gamma_k\\
 &u&\longmapsto&\sum_i s_i\delta_{\pi(y_i)}
\end{array}
\]
where $s_i$ and $y_i$ are defined by $\tilde\Psi_\delta$, namely $\tilde\Psi_{\delta}(u)=\sum_i s_i\delta_{y_i}$.\\
Clearly by Step 2, \eqref{Pigreco=Id} and the definition of $\Psi$ if $\frac{ e^{u_n}\chi_{\Sigma^+}}{\int_{\Sigma^+} e^{u_n}dV_{g_0}}\rightharpoonup\sigma\in \Gamma_k$ then $\Psi(u_n)\to\sigma$.\\
\end{proof}

\begin{remark}\label{remnons}
It is possible to extend Proposition~\ref{prop:Psinoncontractible} to a general surface $\Sigma$. This is trivial in case $(a)$, while the generalization of point $(b)$ requires a more refined construction, involving a different compact subset $\Gamma\subset\{x\in\Sigma\,|\,K(x)>0\}\setminus\{p_1,\ldots,p_\ell\}$ homotopically equivalent to a connected, but not simply connected, component of $\{x\in\Sigma\,|\,K(x)>0\}$.
\end{remark}

\subsubsection{\boldmath${\lambda\in(8\pi,16\pi)}$}

\

The next results are helpful to treat the case when $\lambda \in (8\pi,16\pi)$ and $S^+$ is non contractible. Indeed in this situation \ref{H4} is not satisfied and so Proposition~\ref{prop:Psinoncontractible} does not provide a map from $I_{\lambda}^{-L}$ into a non contractible set, see Remark~\ref{rem:PN+noncontr}. Besides, we will construct, for $\lambda\in(8\pi,16\pi)$ a map which will allow to get a more accurate description of the low sublevels even for $S^+$ non contractible. In fact this more precise construction is not necessary to get existence of solutions for \eqref{equation} when $S^+$ has nontrivial homotopy type but we think that can be useful to obtain a better multiplicity result for solutions of \eqref{equation}. We plan to treat the multiplicity issue in a forthcoming paper. In conclusion, notice that in the following propositions is not assumed $S^+$ to be non contractible.

\begin{proposition}\label{prop:beta}
Assume $(\Sigma,g)=(\Sd,g_0)$, $p_1,\ldots,p_m\in\Sd$, $\alpha_1,\ldots,\alpha_m>0$ and $\lambda\in(8\pi,16\pi)$. If \ref{H1}, \ref{H2} hold and $C_1>2$ is a constant, then there exist $\tau>0$, $L_0>0$ and a continuous map
\beq\label{beta}
\beta:I_{\lambda}^{-L_0}\to \overline{S^+},
\eeq
satisfying the following property: for any $u\in I_{\lambda}^{-L_0}$ there exist $\bar \sigma>0$ and $\bar y\in \Sd$ such that $\dist(\bar y, \beta(u))<5 C_1 \bar \sigma$ and

\beq\label{ysigmaproperty}
\int_{B_{\bar y}(\bar \sigma)\cap S^+} \tilde K e^u\,dV_{g_0}=\int_{S^+\setminus B_{\bar y}(C_1 \bar\sigma)} \tilde K e^u\,dV_{g_0}\geq \tau\int_{S^+} \tilde K e^u\,dV_{g_0}.
\eeq
\end{proposition}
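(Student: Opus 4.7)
The plan is to exploit the strict subcriticality $\lambda<16\pi$ together with the improved Moser--Trudinger inequality (Proposition~\ref{prop:ChenLiIneq} with $\ell=1$) to force the measure $\tilde K e^u\chi_{S^+}$ to concentrate at a single point whenever $I_\lambda(u)\le -L_0$, and then to extract from this concentration a pair $(\bar y,\bar\sigma)$ and a continuous map $\beta$ satisfying \eqref{ysigmaproperty}.

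First I would establish the following concentration statement: for every $\eps_0>0$ there exist $r_0>0$ and $L_0>0$ such that every $u\in I_\lambda^{-L_0}$ admits a point $p_u\in\Sd$ with $\int_{B_{p_u}(r_0)\cap S^+}\tilde Ke^u\,dV_{g_0}\ge (1-\eps_0)\int_{S^+}\tilde K e^u\,dV_{g_0}$. The proof is by contradiction: if concentration fails, Lemma~\ref{lemma:criterion} applied to the probability measure $\tilde K e^u\chi_{S^+}/M(u)$ produces two disjoint balls each carrying a fixed positive fraction $\bar\gamma$ of the mass, and then Proposition~\ref{prop:ChenLiIneq} with $\tilde H=\tilde K\chi_{S^+}$ and $\ell=1$ yields
\[
\log\int_{S^+}\tilde K e^u\,dV_{g_0}\le\frac{1}{32\pi-\eps}\int_{\Sd}|\nabla u|^2\,dV_{g_0}+\dashint_{\Sd}u\,dV_{g_0}+C.
\]
Inserted in the definition of $I_\lambda$ and choosing $\eps$ so small that $\tfrac12-\tfrac{\lambda}{32\pi-\eps}>0$ (possible because $\lambda<16\pi$), this gives $I_\lambda(u)\ge -C'$ uniformly on $I_\lambda^{-L_0}$, contradicting $I_\lambda(u)\le -L_0$ once $L_0>C'$.

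Granted this, I would define $M:=\int_{S^+}\tilde K e^u\,dV_{g_0}$ and $G_{u,y}(\sigma):=\int_{B_y(\sigma)\cap S^+}\tilde K e^u\,dV_{g_0}$. For each $y\in\Sd$ the continuous nondecreasing function $\sigma\mapsto G_{u,y}(\sigma)+G_{u,y}(C_1\sigma)-M$ is strictly negative at $\sigma=0^+$ and strictly positive for $\sigma\ge\diam(\Sd)$, hence vanishes at some smallest $\sigma(u,y)>0$, which realizes the equality in \eqref{ysigmaproperty}. I would then set $\bar\sigma(u):=\inf_{y\in\Sd}\sigma(u,y)$ and choose a minimizer $\bar y=\bar y(u)$, and finally define $\beta(u)$ as the closest-point projection of $\bar y(u)$ onto the compact set $\overline{S^+}$ (mollifying in $u$ if necessary). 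The tolerance $\dist(\bar y,\beta(u))<5C_1\bar\sigma$ in the statement is precisely what absorbs the non-uniqueness of the minimizer and the smoothing error, making $\beta$ continuous.

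The main obstacle is the uniform lower bound $G_{u,\bar y}(\bar\sigma)\ge \tau M$, which I would prove once more by contradiction. If along a sequence $u_n\in I_\lambda^{-L_0}$ one had $G_{u_n,\bar y_n}(\bar\sigma_n)/M(u_n)\to 0$, the defining balance would force essentially all the mass of $\tilde K e^{u_n}\chi_{S^+}$ to lie inside the annulus $B_{\bar y_n}(C_1\bar\sigma_n)\setminus B_{\bar y_n}(\bar\sigma_n)$. The minimality of $\bar\sigma_n$ prevents any single ball of radius $\bar\sigma_n$ from containing more than a fixed fraction of the total mass, so Lemma~\ref{lemma:criterion} applied again provides two disjoint sub-balls of radius proportional to $\bar\sigma_n$ inside the annulus (which fit since $C_1>2$) each carrying a uniform positive fraction of mass, and one more application of Proposition~\ref{prop:ChenLiIneq} with $\ell=1$ yields $I_\lambda(u_n)\ge -C$, the desired contradiction. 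This is the delicate part because it couples the geometric use of the annular structure with the minimality defining $(\bar y,\bar\sigma)$; the remainder of the argument is essentially bookkeeping.
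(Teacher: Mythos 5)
Your scheme differs from the paper's in two important places, and each carries a gap.

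\textbf{The uniform lower bound $T(\bar y,f)\geq\tau$.} You define $\bar\sigma(u)=\inf_{y}\sigma(u,y)$, take a minimizer $\bar y$, and then try to prove $\int_{B_{\bar y}(\bar\sigma)\cap S^+}\tilde K e^u\geq\tau\int_{S^+}\tilde K e^u$ by contradiction, applying Lemma~\ref{lemma:criterion} and Proposition~\ref{prop:ChenLiIneq} \emph{at the scale} $\bar\sigma_n$ inside the annulus $A_{\bar y_n}(\bar\sigma_n,C_1\bar\sigma_n)$. This does not close: Lemma~\ref{lemma:criterion} produces $\bar r=\bar r(\eps,r,\Sd)$ which shrinks with the input scale $r\sim\bar\sigma_n$, and the constant in Proposition~\ref{prop:ChenLiIneq} depends on the separation $\delta\sim\bar r$ and degenerates as $\delta\to 0$. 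Since nothing in your argument prevents $\bar\sigma_n\to 0$ (indeed the concentration you establish in the first step pushes precisely in that direction), the bound you would extract is $I_\lambda(u_n)\geq-C_n$ with $C_n\to\infty$, which is no contradiction with $I_\lambda(u_n)\leq-L_0$ for a fixed $L_0$. The paper sidesteps this entirely by proving the lower bound $\max_x T(x,f)>2\tau$ as a universal fact about \emph{all} probability densities $f\in\mathcal A_0$, via continuity of $T$ and a density argument reducing to strictly positive densities (imported from Step~1 of Proposition~3.1 in \cite{MalchiodiRuizSphere}); no Moser--Trudinger inequality and no low sublevels enter at this point. The improved inequality is only used afterwards, and only to show $\bar\sigma\to 0$ (Step~3), where it is applied with sets at a fixed macroscopic separation.

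\textbf{Continuity of $\beta$ and the choice of $\bar\sigma$.} Defining $\beta(u)$ as the closest-point projection of a chosen minimizer $\bar y(u)$ does not yield a continuous map: the minimizer selection is not continuous in $u$, and ``mollifying in $u$'' is not a construction — it is unclear how to smooth while preserving the exact balance $\int_{B_{\bar y}(\bar\sigma)\cap S^+}\tilde K e^u=\int_{S^+\setminus B_{\bar y}(C_1\bar\sigma)}\tilde K e^u$ and the $5C_1\bar\sigma$ tolerance. The paper resolves this by first proving that the common value $T(x,f)$ is \emph{continuous} in $(x,f)$ (Step~0), then defining $\beta$ through the continuous barycenter formula $\eta(u)=\int[T(\cdot,f_u)-\tau]^+x\,dV/\int[T(\cdot,f_u)-\tau]^+\,dV$, an orthogonal projection onto $\Sd$, and a projection onto $\overline{S^+}$; the pair $(\bar y,\bar\sigma)$ only appears as an auxiliary tool to estimate $\dist(\bar y,\beta(u))$, and crucially $\bar\sigma$ is taken as a \emph{supremum} over the set $S(f)=\{T(\cdot,f)\geq\tau\}$, which is what produces the diameter bound $\diam S(f)\leq(C_1+1)\bar\sigma$ needed to control the barycenter. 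With your $\bar\sigma$ being an infimum over all $y$, there is no analogue of this diameter bound and no visible route to $\dist(\bar y,\beta(u))<5C_1\bar\sigma$.

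Your reduction to a single concentration point via Proposition~\ref{prop:ChenLiIneq} with $\ell=1$ is correct and is implicitly present in the paper, but the core of Proposition~\ref{prop:beta} — the universal $\tau$-bound and a genuinely continuous $\beta$ — requires a different mechanism than the one you propose.
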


\begin{remark}\label{rem:below prop beta}
It is worth to point out that, even though Proposition 3.1 of \cite{MalchiodiRuizSphere} holds true also on a manifold with boundary, we can not apply directly such result because our functional $I_\lambda$ is defined on functions in $H^1(\Sd)$ and not in $H^1(S^+)$. However we will follow the arguments of \cite{MalchiodiRuizSphere} modifying them non trivially in order to handle the extra difficulty  given from the fact that $\tilde K$ changes sign and so $S^-$ has positive measure and $S^+$ is not necessarily connected.
\end{remark}

\begin{proof}
Let us define
$$
\mathcal A_0=\{f\in L^1(\Sd)\mid f(x)\geq0\textrm{ a.e., }\int_{\Sd}f\,dV_{g_0}=1\},
$$
$$
\sigma:\Sd\times\mathcal A_0\longrightarrow (0,+\infty),
$$
where $\sigma=\sigma(x,f)$ is such that
$$
\int_{B_x(\sigma)}f\,dV_{g_0}=\int_{\Sd\setminus B_x(C_1\sigma)} f\,dV_{g_0}.
$$
Notice that the value $\sigma(x,f)$ is \emph{not} uniquely determined, \emph{nor} necessarily continuous.

Now let us define $T:\Sd\times\mathcal A_0\longrightarrow(0,+\infty)$ by
$$
T(x,f)=\int_{B_x(\sigma(x,f))}f\,dV_{g_0}.
$$
Notice that $T(x,f)$ does not depend on $\sigma$ and it is uniquely determined.
\textbf{Step 0:} $T$ is continuous.

Let us suppose by contradiction that there exist $(x_n,f_n)\in \Sd\times \mathcal A_0$ such that
$$
(x_n,f_n)\to (x,f)\in \Sd\times \mathcal A_0\quad\textnormal{but}\quad  |T(x_n,f_n)-T(x,f)|\not\to 0\quad\textnormal{as $n\to+\infty$}.
$$
Being $0<\sigma(x_n,f_n)<\frac12 \diam(\Sd)$, up to a subsequence $\sigma(x_n,f_n)\to \sigma_\infty$, as $n\to+\infty$.\\
Now if $\sigma_\infty=\sigma(x,f)$, then
\beq\label{meas to 0}
\meas(B_{x_n}(\sigma(x_n,f_n))\vartriangle B_x(\sigma(x,f)))\to 0\qquad \textnormal{as $n\to+\infty$},
\eeq
and so by the convergence of $f_n$ to $f$ in $L^1$ we have
\begin{eqnarray}\label{Tcaso1}
|T(x,f)-T(x_n,f_n)|&\leq&\int\limits_{ B_{x}(\sigma(x,f))\setminus B_{x_n}(\sigma(x_n,f_n))}\!\!\!\!\!\!f\,dV_{g_0}+\int\limits_{B_{x_n}(\sigma(x_n,f_n))\setminus B_{x}(\sigma(x,f))} \!\!\!\!\!\!f_n\,dV_{g_0}\nonumber\\
&\;&+\int\limits_{B_{x_n}(\sigma(x_n,f_n))\cap B_x(\sigma(x,f))} \!\!\!\!\!\!|f_n-f|\,dV_{g_0}\stackrel{n\to+\infty}{\longrightarrow} 0,
\end{eqnarray}
which gives the desired contradiction.

On the other hand if $\sigma_\infty>\sigma(x,f)$, then for $n$ sufficiently large
\beq\label{inclusions}
\begin{array}{c}
B_x(\sigma(x,f))\subset B_{x_n}(\sigma(x_n,f_n))\\ \Sd\setminus B_x(C_1\sigma(x,f))\supset \Sd\setminus B_{x_n}(C_1\sigma(x_n,f_n)).
\end{array}
\eeq
Then for $n$ sufficiently large
\begin{equation*}
|T(x,f)-T(x_n,f_n)|\leq\int\limits_{B_{x_n}(\sigma(x_n,f_n))}|f_n-f|\,dV_{g_0}+\int\limits_{B_{x_n}(\sigma(x_n,f_n))\setminus B_{x}(\sigma(x,f))}f\,dV_{g_0}
\end{equation*}
and so in turn by the convergence of $f_n$ to $f$ we have that
\beq\label{appoepsilon}
\liminf_{n\to+\infty} \int\limits_{B_{x_n}(\sigma(x_n,f_n))\setminus B_{x}(\sigma(x,f))}f\,dV_{g_0} > 0.
\eeq
By the definition of $\sigma$, the convergence of $f_n$ to $f$ and \eqref{inclusions} we get
\begin{eqnarray*}
\int\limits_{B_x(\sigma(x,f))}f\,dV_{g_0}&=&\int\limits_{\Sd\setminus B_x(C_1(\sigma(x,f)))}f\,dV_{g_0}=\int\limits_{\Sd\setminus B_x(C_1(\sigma(x,f)))}f_n\,dV_{g_0}+o(1)\\
&\geq&\int\limits_{\Sd\setminus B_{x_n}(C_1(\sigma(x_n,f_n)))}f_n\,dV_{g_0}+o(1)=\int\limits_{B_{x_n}(\sigma(x_n,f_n))}f_n\,dV_{g_0}+o(1)\\
&=&\int\limits_{B_{x_n}(\sigma(x_n,f_n))}(f_n-f)\,dV_{g_0}+\int\limits_{B_{x_n}(\sigma(x_n,f_n))\setminus B_x(\sigma(x,f))}f\,dV_{g_0}\\
&&+\int\limits_{B_x(\sigma(x,f))}f\,dV_{g_0}+o(1)\\
&\geq&\int\limits_{B_{x_n}(\sigma(x_n,f_n))\setminus B_x(\sigma(x,f))}f\,dV_{g_0}+\int\limits_{B_x(\sigma(x,f))}f\,dV_{g_0}+o(1)
\end{eqnarray*}
which, combined with \eqref{appoepsilon}, gives the desired contradiction.

At last, the case $\sigma_\infty<\sigma(x,f)$ can be treated exactly as the latter case, just reversing the roles of $B_{x_n}(\sigma(x_n,f_n))$ and $B_x(\sigma(x,f))$.

\

\textbf{Step 1:} There exists $\tau>0$ such that $\max_{x\in \Sd}T(x,f)>2\tau$ for all $f\in\mathcal A_0$.

Let us introduce
$$
\mathcal A=\{h\in L^1(\Sd),\,h(x)>0\textrm{ a.e., }\int_{\Sd}h\,dV_{g_0}=1\}.
$$
It is easy to see that $\mathcal A$ is dense in $\mathcal A_0$. Moreover in Step 1 of Proposition 3.1 of \cite{MalchiodiRuizSphere} it is proved that there exists $\tilde\tau>0$ such that $\max_{x\in \Sd}T(x,f)>2\tilde\tau$ for all $f\in\mathcal A$. So our thesis follows from these facts and Step 0. Indeed, fix $f\in \mathcal{A}_0$ and let $\{h_n\}\subset \mathcal A$ such that $h_n\to f$ in $L^1(\Sd)$ and let $x_n\in \Sd$ such that $T(x_n,h_n)=\max_{x\in \Sd}T(x,h_n)$, then $T(x_n,h_n)>2\tilde\tau$. Up to a subsequence $x_n\to x_0\in \Sd$ as $n\to+\infty$ and so, by the continuity of $T$, $T(x_n,h_n)\to T(x_0,f)\geq 2\tilde \tau$. The thesis follows taking $\tau=\frac{\tilde\tau}2$.

\

\textbf{Step 2:} Let us define
$$
S(f)=\{x\in \Sd\mid T(x,f)\geq\tau\}.
$$
By Step 0 and Step 1 $S(f)$ is a non empty compact set for any $f\in\mathcal{A}_0$.

Let us define also
$$
\bar{\sigma}(f)=\sup_{x\in S(f)} \sigma(x,f).
$$
Let us prove that even if $\sigma$ is not continuous, up to eventually redefine $\sigma(\cdot,f)$ in a point, there exists
$$\bar y\in S(f)\quad \textnormal{such that}\quad\sigma(\bar y,f)=\bar\sigma.$$
Indeed let $\{x_n\} \subset S(f)$ such that $\sigma(x_n,f)\to\bar\sigma(f)$, then since $S(f)$ is compact, up to a subsequence, $x_n\to\bar y\in S(f)$. Thus
$$
\int_{B_{x_n}(\sigma(x_n,f))}f\,dV_{g_0}=\int_{\Sd\setminus B_{x_n}(C_1\sigma(x_n,f))} f\,dV_{g_0}\quad
$$
and so
$$
\int_{B_{\bar y}(\bar\sigma(f))}f\,dV_{g_0}=\int_{\Sd\setminus B_{\bar y}(C_1\bar\sigma(f))} f\,dV_{g_0}.
$$
Now if $\sigma(\bar y,f)<\bar\sigma(f)$ we can redefine $\sigma(\cdot,f)$ at $\bar y$ as $\sigma(\bar y,f)=\bar\sigma(f)$, and the proof of our claim is completed. Clearly this modification does not affect the previous steps.

\

For $u\in X$, take $f\equiv f_u=\frac{\tilde K e^u \chi_{{S^+}}}{\int_{{S^+}}\tilde K e^u\,dV_{g_0}}$.

\

\textbf{Step 3:} For any $\eps>0$ there exists $L_0>0$ large enough such that $\diam S(f)\leq (C_1+1)\bar \sigma<\eps$ for all $u\in I^{-L_0}_\lambda$.

By definition of $\bar\sigma$, $S(f)$ and $S^+$
$$
\int_{B_{\bar y}(\bar\sigma)\cap {S^+}}\tilde K e^u\,dV_{g_0}\geq\tau\int_{{S^+}}\tilde K e^u\,dV_{g_0}\geq \tau\int_{\Sd}\tilde K e^u\,dV_{g_0}\textrm{\quad and}$$
$$
\int_{{S^+}\setminus B_{\bar y}(C_1\bar\sigma)}\tilde K e^u\,dV_{g_0}\geq\tau\int_{{S^+}}\tilde K e^u\,dV_{g_0}\geq \tau\int_{\Sd}\tilde K e^u\,dV_{g_0}.
$$
Then Proposition \ref{prop:ChenLiIneq} implies that $\bar\sigma\to0$, as $L\to+\infty$, uniformly for $u\in I^{-L}_\lambda$. Thus we can choose $L_0>0$ such that $\bar\sigma<\min\left\{\frac{\eps}{C_1+1},\frac{\min_i(\diam D_i)}{6}\right\}$ for any $u\in I_{\lambda}^{-L}$, where $D_i$ are the connected components of $S^+$.

Now take $x$, $y\in S(f)$, where $f=\frac{\tilde K e^u \chi_{{S^+}}}{\int_{{S^+}}\tilde K e^u\,dV_{g_0}}$, $u\in I_{\lambda}^{-L_0}$, we claim that
\beq\label{claimdist}
\dist(x,y)\leq C_1\max\{\sigma(x,f),\sigma(y,f)\}+\min\{\sigma(x,f),\sigma(y,f)\}.
\eeq
Let us prove \eqref{claimdist}.\\
Let us suppose by contradiction that $B_x(C_1(\sigma(x,f)))\cap B_y(\sigma(y,f)+\eps)=\emptyset$ for some $\eps>0$. Clearly we can take $\eps<\frac{\min_i(\diam D_i)}{6}$ and such that $B_y(\sigma(y,f)+\eps)$ does not exhaust the whole $S^+$. Let us now show that $A_y(\sigma(y,f),\sigma(y,f)+\eps)\cap S^+$ is a nonempty open set. \\
Let us prove first that there exists $z\in \partial B_y(\sigma(y,f)+\eps)\cap S^+$.\\
By contradiction we suppose that $\partial B_y(\sigma(y,f)+\eps)\cap S^+=\emptyset$.\\
Since $\int_{B_y(\sigma(y,f))\cap S^+}f\,dV_{g_0}>0$, $B_y(\sigma(y,f)+\eps)\cap S^+\neq\emptyset$, so $D_i\subset B_y(\sigma(y,f)+\eps)$ for some $i$. This would imply that $\min_i (\diam (D_i))<2(\sigma(y,f)+\eps)\leq 2\bar\sigma+2\eps<\frac23 \min_i (\diam (D_i))$ which is impossible.

Next, being $S^+$ open, $B_z(\eps)\cap A_y(\sigma(y,f),\sigma(y,f)+\eps)\cap S^+$ is a nonempty open set.
Then
\begin{eqnarray*}
\int_{B_x(\sigma(x,f))\cap S^+}\tilde K e^u \,dV_{g_0}&=&\int_{S^+\setminus B_x(C_1\sigma(x,f))}\tilde K e^u\,dV_{g_0}\\
&\geq&\int_{B_y(\sigma(y,f)+\epsilon)\cap S^+}\tilde K e^u\,dV_{g_0}>\int_{B_y(\sigma(y,f))\cap S^+}\tilde K e^u\,dV_{g_0}.
\end{eqnarray*}

By interchanging the roles of $x$ and $y$, we would also obtain the reverse inequality. This contradiction proves \eqref{claimdist}.

Then by \eqref{claimdist} and the definition of $\bar\sigma$ we have $\dist(x,y)\leq (C_1+1)\bar\sigma$ for any given $x,y\in S(f)$.

\

\textbf{Step 4:} Definition of $\beta$ and conclusion.

We consider $\Sd$ embedded in $\R^3$ and we define
$$
\eta: I_\lambda^{-L_0}\to\R^3, \; \eta(u)=\frac{\int_{\Sd} [T(x,f)-\tau]^+ x\,dV_{{g_0}}}{\int_{\Sd} [T(x,f)-\tau]^+ dV_{{g_0}}}\quad \textnormal{where $f\equiv f_u=\frac{\tilde K e^u \chi_{{S^+}}}{\int_{{S^+}}\tilde K e^u\,dV_{g_0}}$.}
$$
Notice that in the above terms the integrands vanish outside $S(f)$.\\

From now on, for $r>0$, according to our notation we will denote by $(S^+)^{r}=\{x\in \Sd \,|\,\dist(x,S^+)<r\}$.\\
Clearly $B_{\bar y}(\bar\sigma)\cap S^+\neq\emptyset$, namely
\beq\label{z}
\bar y\in(S^+)^{\bar\sigma},
\eeq
moreover by Step 3 $\diam(S(f))\leq (C_1+1)\bar\sigma$ and therefore being $\bar y\in S(f)$
$$
S(f)\subset (S^+)^{(C_1+2)\bar\sigma}\qquad\textnormal{and}\qquad S(f)\subset \bar B_{\bar y}^{\R^3}((C_1+1)\bar\sigma).
$$
Being $\eta(u)$ a barycenter of a function supported in $S(f)$, we have
\begin{equation}\label{u}
|\eta(u)-\bar y|\leq (C_1+1)\bar\sigma.
\end{equation}
Let $U\supset \Sd$, $U\subset\R^3$ an open tubular neighborhood of $\Sd$, and $P:U\to \Sd$ an orthogonal projection onto $\Sd$. Moreover by Step 3 there exists $L_0>0$ sufficiently large such that $\eta(u)\in U$ for any $u\in I_\lambda^{-L_0}$. Thus we can define
$$
\tilde \beta: I_\lambda^{-L_0}\to \Sd \qquad\qquad \tilde \beta(u)=P\circ \eta(u).
$$
Next, we claim that, eventually for a larger $L_0$,
\beq\label{zz}
\dist(\bar y,\tilde \beta(u))\leq 2C_1\bar\sigma.
\eeq

Let $T_{\bar{y}}(\Sd)$ be the tangent space to $\Sd$ at $\bar y$. For any $x \in S(f) \subset B^{\mathbb{R}^3}_{\bar y} ((C_1+1) \bar \sigma)$, we have that
$$
\min\left\{|\bar y + y -x | : y \in T_{\bar{y}}(\Sd) \right\} \leq C \bar{\sigma}^2,
$$
where $C$ depends only on the $C^2$ regularity of $\Sd$. Since $\eta(u)$ is a barycenter of a function supported in $S(f)$, it is clear that
$$
\min\left\{|\bar y + y -\eta(u) | : y \in T_{\bar{y}}(\Sd) \right\} \leq C \bar{\sigma}^2.
$$

By taking a larger $L_0$, if necessary, by Step~3 $\bar{\sigma}$ is small enough such that
\begin{equation}\label{uu}
|\tilde{\beta}(u)-\eta(u)| = \min_{x\in \Sd} |\eta(u)-x|\leq 2C \bar{\sigma}^2 \leq \bar{\sigma}.
\end{equation}

Since $C_1>2$, let $\nu = \frac{2C_1}{C_1+2}>1$, again, by Step~3 we can take $L_0$ large enough such that $\bar{\sigma}$ satisfies that for $x,y \in \Sd$, if $|x-y|\leq(C_1+2)\bar\sigma$, then $\dist(x,y)\leq \nu |x-y|$. This together with \eqref{u} and \eqref{uu} proves \eqref{zz}.

Combining \eqref{z} and \eqref{zz} we obtain that
\beq\label{zzz}
\dist(\tilde\beta(u),\overline{S^+})<(2C_1+1)\bar\sigma.
\eeq
Besides by the regularity of $\partial S^+$ there exists $\gamma>0$ and a continuous projection $\pi:(S^+)^{\gamma}\to\overline{S^+}$ such that
\beq\label{zzzz}
\pi_{|\overline{S^+}}=Id_{|\overline{S^+}}\qquad\textnormal{and}\qquad \dist(x,\pi(x))=\dist(x,\overline{S^+}).
\eeq
Again for $L_0>0$ large enough $2(C_1+1)\bar\sigma<\gamma$ and so, by \eqref{zzz}, $\tilde\beta(I_\lambda^{-L_0})\subset(S^+)^\gamma$. Then we can define $\beta:I_\lambda^{-L_0}\to \overline{S^+}$ as
$$
\beta(u)=\pi\circ \tilde\beta(u).
$$
At last by \eqref{zz}, \eqref{zzzz}, \eqref{zzz} and $C_1>2$ we have
\begin{eqnarray*}
\dist(\bar y,\beta(u))&\leq&\dist(\bar y, \tilde\beta(u))+\dist(\tilde\beta(u),\pi\circ\tilde\beta(u))\\
&\leq& 2C_1\bar\sigma+\dist(\tilde\beta(u),\overline{S^+})\\
&\leq&(4C_1+1)\bar\sigma< 5C_1\bar\sigma.
\end{eqnarray*}
\end{proof}

\begin{remark}\label{rem:betauntox}
With the above construction, if $f_n=\frac{\tilde K e^{u_n}\chi_{S^+}}{\int_{S^+}\tilde K e^{u_n}dV_{g_0}}\rightharpoonup \delta_x$ for some $x\in\overline{S^+}$ then one also has $\beta(u_n)\to x$. This can be seen exactly as in Remark~3.2 of \cite{MalchiodiRuizSphere}.
\end{remark}

Next we show that the functional $I_\lambda$ is bounded from below on the functions in $\beta^{-1}(J_\lambda)$, where $\beta$ is the map constructed in Proposition~\ref{prop:beta} and $J_\lambda$ is defined in \eqref{Jlambda}.

\begin{proposition}\label{prop:betaJlambda}
Assume $(\Sigma,g)=(\Sd,g_0)$, $p_1,\ldots,p_m\in\Sd$, $\alpha_1,\ldots,\alpha_{\ell}\in(0,1]$, $\alpha_{\ell+1},\ldots,\alpha_m>0$ and $\lambda \in (8\pi,16\pi)$. If \ref{H1}, \ref{H2} and \ref{H3} hold, then there exist $C_1>0$ sufficiently large, $L_0>0$, $\tau>0$ such that Proposition \ref{prop:beta} applies and there exists $L>L_0$ such that $I_\lambda(u)>-L$ for any $u\in I_\lambda^{-L_0}$ satisfying that $\beta(u)=p_i\in J_{\lambda}$.
\end{proposition}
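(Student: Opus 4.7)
The plan is to first fix any $C_1>2$ so that Proposition~\ref{prop:beta} applies, yielding constants $\tau,L_0>0$ and the map $\beta$; then, using Step~3 of the proof of Proposition~\ref{prop:beta}, to enlarge $L_0$ so that the associated radius $\bar\sigma$ satisfies $(5C_1+1)\bar\sigma<\min_{j\neq i}\dist(p_i,p_j)$ uniformly for $u\in I_\lambda^{-L_0}$. On the ball $B_{p_i}((5C_1+1)\bar\sigma)$, the only singularity of $\tilde K$ is then at $p_i$, so by \eqref{tildeK}, \eqref{accame} and the smoothness of the regular part $h$ there exists $C>0$ (independent of $u$) with $\tilde K(x)\leq C\,\dist(x,p_i)^{2\alpha_i}$ on that ball.

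Given $u\in I_\lambda^{-L_0}$ with $\beta(u)=p_i\in J_\lambda$, Proposition~\ref{prop:beta} provides $\bar y,\bar\sigma$ with $\dist(\bar y,p_i)<5C_1\bar\sigma$ satisfying \eqref{ysigmaproperty}. Combining the inclusion $B_{\bar y}(\bar\sigma)\subset B_{p_i}((5C_1+1)\bar\sigma)$, the pointwise control of $\tilde K$ near $p_i$, and the fact that $\tilde K\leq 0$ on $S^-$ (so $\int_{\Sd}\tilde K e^u\leq \int_{S^+}\tilde K e^u$), one obtains
\[
\int_{\Sd}\tilde K e^u\,dV_{g_0}\leq \int_{S^+}\tilde K e^u\,dV_{g_0}\leq \tfrac{1}{\tau}\int_{B_{\bar y}(\bar\sigma)\cap S^+}\tilde K e^u\,dV_{g_0}\leq \tfrac{C}{\tau}\int_{\Sd}\dist(x,p_i)^{2\alpha_i}e^u\,dV_{g_0}.
\]

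The key analytic input is the singular Moser--Trudinger inequality due to Troyanov~\cite{Troy}: for any $\alpha>-1$ and any $u\in H^1(\Sd)$,
\[
\log\int_{\Sd}\dist(x,p_i)^{2\alpha}e^u\,dV_{g_0}\leq \frac{1}{16\pi(1+\alpha)}\int_{\Sd}|\nabla u|^2\,dV_{g_0}+\dashint_{\Sd}u\,dV_{g_0}+C.
\]
Applied with $\alpha=\alpha_i$ and inserted into the definition \eqref{functional} of $I_\lambda$, this yields
\[
I_\lambda(u)\geq \left(\tfrac{1}{2}-\tfrac{\lambda}{16\pi(1+\alpha_i)}\right)\int_{\Sd}|\nabla u|^2\,dV_{g_0}-\tilde C.
\]
Since $p_i\in J_\lambda$ means precisely $\lambda<8\pi(1+\alpha_i)$, the coefficient of $\int|\nabla u|^2$ is strictly positive, so $I_\lambda(u)\geq -\tilde C$ uniformly, and any choice $L>\max\{L_0,\tilde C\}$ works.

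The main subtlety is exploiting the improved Moser--Trudinger constant $\tfrac{1}{16\pi(1+\alpha_i)}$ associated with the singular weight $\dist(x,p_i)^{2\alpha_i}$: with the classical constant $\tfrac{1}{16\pi}$ one would only gain coercivity for $\lambda<8\pi$, losing exactly the range $(8\pi,8\pi(1+\alpha_i))$ that characterizes $J_\lambda$. The remaining geometric step, namely shrinking $\bar\sigma$ below $\min_{j\neq i}\dist(p_i,p_j)$ by enlarging $L_0$, is standard and is what justifies replacing $\tilde K$ by its model weight $\dist(x,p_i)^{2\alpha_i}$ up to a multiplicative constant.
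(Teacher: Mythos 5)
Your ``key analytic input'' is not a valid inequality. The singular Moser--Trudinger inequality
\[
\log\int_{\Sd}\dist(x,p_i)^{2\alpha}e^u\,dV_{g_0}\leq \frac{1}{16\pi(1+\alpha)}\int_{\Sd}|\nabla u|^2\,dV_{g_0}+\dashint_{\Sd}u\,dV_{g_0}+C
\]
with the improved constant $\tfrac{1}{16\pi(1+\alpha)}$ holds only for $\alpha\in(-1,0)$ (where the weight blows up at $p_i$ and penalizes concentration elsewhere). For $\alpha>0$ the weight merely vanishes at $p_i$, which is harmless if $u$ concentrates at some other point $q\neq p_i$. Concretely, taking standard bubbles $u_\mu=2\log\!\left(\frac{\mu}{1+\mu^2\dist(x,q)^2}\right)$ centered at $q\neq p_i$ gives $\int_{\Sd}\dist(x,p_i)^{2\alpha}e^{u_\mu}\to c\,\dist(q,p_i)^{2\alpha}>0$, while the right-hand side of your inequality is $\left(\frac{2}{1+\alpha}-4\right)\log\mu+O(1)\to-\infty$ when $\alpha>0$. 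So the inequality fails, and with the correct constant $\tfrac{1}{16\pi}$ your final estimate only gives coercivity for $\lambda<8\pi$, losing exactly the range $(8\pi,8\pi(1+\alpha_i))$ you need.

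The deeper point your argument misses is that the improvement does not come from the weight alone but from the constraint $\beta(u)=p_i$, which forces concentration (if any) to occur near $p_i$, i.e.\ precisely where the weight vanishes. Replacing $\tilde K$ by the global model weight $\dist(x,p_i)^{2\alpha_i}$ and then forgetting where $u$ concentrates discards this crucial localization. The paper's proof keeps it: it picks a dyadic scale $s$ near $\bar\sigma$ with small annular Dirichlet energy (via the choice of $N$ with $\eps^{-1}<N<\log_4 C_1$), applies a \emph{localized} Moser--Trudinger (Proposition~\ref{prop:MTlocalized}) to the dilated function $v(x)=u(sx+\bar y)$ on $B_0(1)$, extracting the factor $s^{2(1+\alpha_i)}$; then on the exterior $\Sd\setminus B_{\bar y}(s)$ it introduces the corrections $\hat u$ (harmonic replacement in the inner ball) and $w$ (a truncated logarithm) so that the weight $\dist(\cdot,p_i)^{2\alpha_i}$ can be absorbed, applies the global Moser--Trudinger of Proposition~\ref{prop:MT} to $\hat v=\hat u+4\alpha_i w$, and controls the boundary trace $\dashint_{\partial B_{\bar y}(s)}u$. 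Multiplying the inner estimate by $\alpha_i$, adding the outer one, and using $\alpha_i\leq 1$ produces the factor $(1+\alpha_i)$ in front of $\log\int\tilde K e^u$ with Dirichlet coefficient essentially $\tfrac{1}{16\pi}$, which is exactly what yields coercivity for $\lambda<8\pi(1+\alpha_i)$. This two-region, dyadic-annulus argument is the genuine content of the proposition and cannot be replaced by a single-shot singular Moser--Trudinger of the type you invoked.
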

\begin{proof}
We will follow very closely the proof of Proposition~4.1 in  \cite{MalchiodiRuizSphere}, adapting it to our different definition of $\beta$.

Let $\eps>0$ to be fixed later depending only on $\lambda$ and a universal constant $C_0$. In turn let $C_1>4$ large enough so that $\eps^{-1}+1<\log_4 C_1$ and let $L_0>0$ and $\tau>0$ such that Proposition~\ref{prop:beta} applies.

Let us suppose by contradiction that there exists a sequence $u_n\in X$ such that $I_\lambda(u_n)\to-\infty$ and $\beta(u_n)=p_i\in J_\lambda$ as $n\to+\infty$. Clearly we can assume without loss of generality that $\int_{\Sd} u_n dV_{g_0}=0$.\\
Let ${\bar y}_n\in S$, ${\bar \sigma}_n>0$ be as in Proposition \ref{prop:beta}, such that $\dist({\bar y}_n, p_i)<5 C_1 {\bar \sigma}_n$. It is easy to see, applying Proposition \ref{prop:ChenLiIneq} as in Step 1 of the proof of Theorem~\ref{thm:compactness}, that ${\bar\sigma}_n\to0$.
Consequently, by virtue of \ref{H3}, for $n$ large enough $\bar{y}_n \in \Sd$. Then we fix $\delta>0$, smaller than the injectivity radius and such that $B_{{\bar y}_n}(\delta)\subset S^+$ for any $n$ sufficiently large, and we choose
\beq\label{epsilongammaC1}
N \in \N \qquad \textnormal{such that}\qquad\eps^{-1}<N<\log_4{C_1}.
\eeq
Since ${\bar\sigma}_n\to 0$ we have that for $n$ sufficiently large $C_1{\bar\sigma}_n<\delta$ and so
$$
\cup_{m=1}^N A_{{\bar y}_n}(4^{m-1}{\bar\sigma}_n,4^{m}{\bar\sigma}_n)\subset A_{{\bar y}_n}(\bar\sigma_n, C_1\bar\sigma_n)\subset B_{{\bar y}_n}(\delta).
$$
Then there exists $s_n\in[2{\bar \sigma}_n,\frac{C_1}2{\bar \sigma}_n ]$ such that

\beq\label{annulusgammaestimate}
\int_{A_{{\bar y}_n}(\frac{s_n}2,2s_n)}|\nabla u_n|^2\,dV_{g_0}\leq\frac 1{N}\int_{B_{{\bar y}_n}(\delta)}|\nabla u_n|^2\,dV_{g_0}.
\eeq

From now on, in order to simplify the notation, we drop the dependence on $n$.

Let us define
\[
\mathcal D_1=\int_{B_{\bar y}(s)}|\nabla u|^2 dV_{g_0},\qquad  \mathcal D_2=\int_{\Sd\setminus B_{\bar y}(s)}|\nabla u|^2 dV_{g_0}, \qquad  \mathcal D=\mathcal D_1+\mathcal D_2.
\]
The proof proceeds in three steps.

\

\textbf{Step 1:} We apply Proposition \ref{prop:MTlocalized} to a convenient dilation of $u$ given by
$$
v(x)=u(sx+\bar y).
$$
We have
$$
\int_{B_{\bar y}(s)}|\nabla u|^2 dV_{g_0}=\int_{B_0(1)}|\nabla v|^2 dV_{g_0}, \qquad \dashint_{B_{\bar y}(s)}u dV_{g_0}=\dashint_{B_0(1)}v dV_{g_0},
$$
\begin{eqnarray*}
\int_{B_{\bar y}(\frac s2)\cap S^+} \tilde K e^u dV_{g_0}&\leq& C\int_{B_{\bar y}(\frac s2)\cap S^+}|x-p_i|^{2\alpha_i}e^u\,dV_{g_0}\\
&\leq& C s^{2\alpha_i}\int_{B_{\bar y}(\frac s2)\cap S^+}e^u dV_{g_0}\leq C s^{2\alpha_i+2}\int_{B_{0}(\frac12)}e^v dV_{g_0}.
\end{eqnarray*}
In the above computations we have used that $|\bar y-p_i|\leq Cs$.
Then, recalling that by definition of $\tau$ (see Proposition \ref{prop:beta})
$$\int_{B_{\bar y}(\frac s2)\cap S^+}\tilde K e^u dV_{g_0} \geq\tau\int_{S^+} \tilde K e^u dV_{g_0}\geq \tau\int_{\Sd} \tilde K e^u dV_{g_0}$$
and applying Proposition \ref{prop:MTlocalized} to $v$ (with $\tilde H=1$) we get
\begin{eqnarray}\label{firstestimate}
\log\int_{\Sd} \tilde K e^u\,dV_{g_0}&\leq& C+2(1+\alpha_i)\log s+\log\int_{B_0(\frac 12)}e^v\,dV_{g_0}\nonumber\\
&\leq& C+2(1+\alpha_i)\log s+\frac1{16\pi-\eps}\int_{B_0(1)}|\nabla v|^2\,dV_{g_0}+\dashint_{B_0(1)} v\,dV_{g_0}\\
&=& C+2(1+\alpha_i)\log s+\frac1{16\pi-\eps}\mathcal D_1+\dashint_{B_{\bar y}(s)} u\,dV_{g_0}.\nonumber
\end{eqnarray}

\

\textbf{Step 2:} Exactly as in Proposition 4.1 of \cite{MalchiodiRuizSphere}, we estimate $\dashint_{\partial B_{\bar y}(s)}u\,dV_{g_0}$. By the trace embedding $\tilde u=u-\dashint_{B_{\bar y}(s)}u\,dV_{g_0}\in L^1(\partial B_{\bar y}(s))$ and thanks to the Poincaré-Wirtinger inequality we get
$$
\left|\dashint_{\partial B_{\bar y}(s)}\tilde u\,dx\right|\leq C\|\tilde u\|_{H^1}\leq C \left(\int_{B_{\bar y}(s)}|\nabla u|^2\,dV_{g_0}\right)^{\frac12}.
$$
Therefore,
\beq\label{appo3}
\left|\dashint_{\partial B_{\bar y}(s)}u\,dx-\dashint_{B_{\bar y}(s)}u\,dV_{g_0}\right|\leq C\left(\int_{B_{\bar y}(s)}|\nabla u|^2\,dV_{g_0}\right)^{\frac12}\leq \eps \mathcal D_1+C'.
\eeq
Now notice that, since the above inequality is invariant under dilation, the constant $C$ is independent of $s$ and hence $C'$ depends only on $\eps$.

\

\textbf{Step 3:} By virtue of the fact that $\tilde K(x)\sim \distd(x,p_i)^{2\alpha_i}$ near $p_i$, and $|x-p_i|\leq C|x-\bar y|$ in $S^+\setminus B_{\bar{y}}(s)$, we get the following estimate
\begin{eqnarray}\label{appo}
&\displaystyle{\int_{{S^+}\setminus B_{\bar y}(s)}\tilde K e^u\,dV_{g_0}}=&\int_{{S^+}\setminus B_{\bar y}(s)}\frac{\tilde K(x)}{|x-\bar y|^{2\alpha_i}} |x-\bar y|^{2\alpha_i} e^u\,dV_{g_0}\leq \\
&\displaystyle{\frac C{s^{2\alpha_i}}\int_{{S^+}\setminus B_{\bar y}(s)}e^{\hat v}\,dV_{g_0}\nonumber }\leq& \frac C{s^{2\alpha_i}}\int_{\Sd}e^{\hat v}\,dV_{g_0},
\end{eqnarray}
where $\hat v(x)=\hat u(x)+4\alpha_i w(x)$,
$$
w(x)=\left\{
       \begin{array}{ll}
        \log s & \hbox{$x\in B_{\bar y}(s)$,} \\
        \log |x-\bar y| & \hbox{$x\in A_{\bar y}(s,\delta)$,} \\
        \log \delta & \hbox{$\Sd\setminus B_{\bar y}(\delta)$,}
       \end{array}
     \right.
\qquad\quad
\left\{
  \begin{array}{ll}
    -\Delta_{g_0} \hat u=0 & \hbox{$x\in B_{\bar y}(s)$,} \\
    \hat u(x)=u(x) & \hbox{$x\notin B_{\bar y}(s)$}.
  \end{array}
\right.
$$
In order to apply the Moser-Trudinger inequality to $\hat v$ we observe that
\beq\label{dashinthatv}
\dashint_{\Sd} \hat v\,dV_{g_0}\leq C+\dashint_{\Sd} \hat u.
\eeq
Since $\dashint_{\Sd} u\,dV_{g_0}=0$ and $\hat u-u$ is compactly supported in $B_{\bar y}(s)$,
\beq\label{dashinthatu}
\left|\dashint_{\Sd} \hat u\,dV_{g_0}\right|=\left|\dashint_{\Sd} (\hat u-u)\right|\leq C\left(\int_{B_{\bar{y}}(s)}|\nabla \hat u-\nabla u|^2\,dV_{g_0}\right)^{\frac12}\leq \eps \mathcal D+C_\eps.
\eeq
We now estimate using \eqref{annulusgammaestimate} and \eqref{epsilongammaC1} the Dirichlet energy
\beq\label{dirichletthatv1}
\int_{B_{\bar y}(s)}|\nabla \hat v|^2\,dV_{g_0}=\int_{B_{\bar y}(s)}|\nabla \hat u|^2\,dV_{g_0}\leq C_0\int_{A_{\bar y}(\frac{s}2,2s)}|\nabla u|^2\,dV_{g_0}\leq C_0\eps \mathcal D,
\eeq
where $C_0$ is independent on the radius $s$, since everything is dilation invariant.

On the other hand integrating by parts we obtain
\begin{eqnarray}\label{dirichletthatv2}
\int_{\Sd\setminus B_{\bar y}(s)}|\nabla \hat v|^2\,dV_{g_0}&=&\int_{\Sd\setminus B_{\bar y}(s)}|\nabla \hat u|^2\,dV_{g_0}+16\alpha_i^2\int_{\Sd\setminus B_{\bar y}(s)}\frac1{|x-\bar y|^2}dV_{g_0}\nonumber\\
&\;&+8\alpha_i\int_{\Sd\setminus B_{\bar y}(s)}\nabla u\cdot\nabla(\log|x-\bar y|)\,dV_{g_0}\\
&\leq&\mathcal D_2-32\pi\alpha^2_i\log s-16\pi\alpha_i\dashint_{\partial B_{\bar y}(s)}u\,dV_{g_0}+C.\nonumber
\end{eqnarray}

Finally applying to $\hat v$ the Moser-Trudinger inequality, Proposition \ref{prop:MT}, and in turn \eqref{dirichletthatv1}, \eqref{dirichletthatv2}, \eqref{dashinthatv}, \eqref{dashinthatu} we get
\begin{eqnarray}\label{appo2}
\log\int_{\Sd} e^{\hat v}\,dV_{g_0}&\leq&\frac1{16\pi}\int_{B_{\bar y}(s)}|\nabla \hat v|^2\,dV_{g_0}+\frac1{16\pi}\int_{\Sd\setminus B_{\bar y}(s)}|\nabla \hat v|^2\,dV_{g_0}+\dashint_{\Sd} \hat v\,dV_{g_0}+C\nonumber\\
&\leq& \frac{C_0\eps \mathcal D}{16\pi}+\frac{\mathcal D_2}{16\pi}-2\alpha^2_i\log s-\alpha_i\dashint_{\partial B_{\bar y}(s)}u\,dV_{g_0}+\eps \mathcal D+C.
\end{eqnarray}
Now, recalling that $B_{\bar y}(s)\subset B_{\bar y}(C_1\bar \sigma)$, the definition of $\bar y$ (see Proposition \ref{prop:beta}), \eqref{appo} and \eqref{appo2} we have that
\begin{eqnarray}\label{secondestimate}
\log\int_{\Sd} \tilde K e^u\,dV_{g_0}&\leq& \log\int_{S^+}\tilde K e^u\,dV_{g_0}\leq \log\left(\frac1\tau\int_{S^+\setminus{B_{\bar y}(s)}}\tilde K e^u\,dV_{g_0}\right)\nonumber\\
&\leq&-2\alpha_i(1+\alpha_i)\log s+C_0\eps \mathcal D+\frac{\mathcal D_2}{16\pi}-\alpha_i \dashint_{\partial B_{\bar y}(s)}u\,dV_{g_0}+C.
\end{eqnarray}
At last, adding \eqref{firstestimate} (multiplied by $\alpha_i$) to \eqref{secondestimate} and using \eqref{appo3} and the assumption $\alpha_i\leq1$ we have
$$
(\alpha_i+1)\log\int_{\Sd} \tilde K e^u\,dV_{g_0}\leq \left(\frac{1}{16\pi-\eps}+C_0\eps\right)\mathcal D+C,
$$
so plugging this estimate in the functional we derive that
$$
I_\lambda(u)\geq \left(\frac12-\lambda\left(\frac1{(16\pi-\eps)(\alpha_i+1)}+\frac{C_0}{\alpha_i+1}\eps\right)\right)\int_{\Sd}|\nabla u_n|\,dV_{g_0}-C.
$$
In order to conclude it suffices to take $\eps$ small enough, depending only on $\lambda$ and $C_0$ ($C_0$ is a universal constant), such that $\left(\frac12-\lambda\left(\frac1{(16\pi-\eps)(\alpha_i+1)}+\frac{C_0}{\alpha_i+1}\eps\right)\right)>0$. Indeed, recalling that we were working with a sequence $u_n$, we get $I_\lambda(u_n)\geq-C$ which leads to the desired contradiction.
\end{proof}

Let $J_\lambda$ be as in \eqref{Jlambda} and let us fix a small positive number $\theta$ such that $S^+\setminus(\partial S^+)^\theta$ is a strong deformation retract of $\overline{S^+}$ and such that, for any $p_i\in J_\lambda$, $B_{p_i}(\theta)\subset S^+\setminus (\partial S^+)^\theta$. Such a $\theta$ does exist if we assume \ref{H2} and \ref{H3} to hold. Then we set

\beq\label{Thetalambda}
\Theta_{\lambda}=S^+\setminus\left((\partial S^+)^\theta\cup\bigcup\limits_{p_i\in J_\lambda}B_{p_i}(\theta)\right)
\eeq
and we finally conclude, defining a continuous map from $I^{-L}_\lambda$ to $\Theta_\lambda$.

\begin{proposition}\label{prop:Psicontractible}
Assume $(\Sigma,g)=(\Sd,g_0)$, $p_1,\ldots,p_m\in\Sd$, $\alpha_1,\ldots,\alpha_m>0$ and $\lambda \in (8\pi,16\pi)$. If \ref{H1}, \ref{H2} and \ref{H3} hold, then for $L>0$ sufficiently large there exists a continuous projection
$$
\Psi:I^{-L}_\lambda\to\Theta_\lambda
$$
with the property that if $\frac{\tilde K e^{u_n}\chi_{S^+}}{\int_{S^+}\tilde K e^{u_n}\,dV_{g_0}}\rightharpoonup\delta_x$ for some $x\in\Theta_\lambda$ then $\Psi(u_n)\to x$.
\end{proposition}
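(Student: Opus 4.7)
The plan is to build $\Psi$ as the composition $\pi\circ\beta$, where $\beta:I_\lambda^{-L_0}\to\overline{S^+}$ is the map provided by Proposition~\ref{prop:beta} and $\pi:\overline{S^+}\setminus J_\lambda\to\Theta_\lambda$ is a continuous retraction. Intuitively, $\beta$ identifies a point around which the measure $\frac{\tilde K e^u\chi_{S^+}}{\int_{S^+}\tilde K e^u\,dV_{g_0}}$ concentrates, while $\pi$ pushes this point away from the boundary of $S^+$ and off the ``heavy'' singularities in $J_\lambda$, near which the functional $I_\lambda$ is already known to be bounded below thanks to Proposition~\ref{prop:betaJlambda}.

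First, fix $C_1>2$ large enough so that both Proposition~\ref{prop:beta} and Proposition~\ref{prop:betaJlambda} apply with the same $C_1$, obtaining $L_0>0$ and the continuous map $\beta$ defined on $I_\lambda^{-L_0}$. Proposition~\ref{prop:betaJlambda} then furnishes some $L>L_0$ with the property that $I_\lambda(u)>-L$ whenever $\beta(u)=p_j$ for some $p_j\in J_\lambda$; equivalently, for every $u\in I_\lambda^{-L}$ one has $\beta(u)\in\overline{S^+}\setminus J_\lambda$.

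Next, I would construct $\pi$ as follows. By \ref{H2} the nodal line $\partial S^+$ is a smooth $1$-submanifold of $\Sd$, so for $\theta$ small enough (compatible with the definition of $\Theta_\lambda$) the tubular neighborhood theorem supplies a diffeomorphism $\Phi:\partial S^+\times[0,\theta]\to\overline{(\partial S^+)^\theta\cap\overline{S^+}}$ with $\Phi(\cdot,0)=\Id_{\partial S^+}$. On the collar $(\partial S^+)^\theta\cap\overline{S^+}$ let $\pi$ be the collapse $\Phi(p,t)\mapsto\Phi(p,\theta)$; on each punctured ball $B_{p_j}(\theta)\setminus\{p_j\}$ with $p_j\in J_\lambda$ let $\pi$ be the geodesic radial projection onto $\partial B_{p_j}(\theta)$; and on $\Theta_\lambda$ let $\pi$ equal the identity. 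By \ref{H3} together with the choice of $\theta$, the three supports of modification are pairwise disjoint and the definitions glue continuously along $\Phi(\partial S^+\times\{\theta\})$ and along each $\partial B_{p_j}(\theta)$, yielding a well-defined continuous retraction $\pi:\overline{S^+}\setminus J_\lambda\to\Theta_\lambda$.

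Set $\Psi=\pi\circ\beta:I_\lambda^{-L}\to\Theta_\lambda$; continuity is automatic. To check the concentration property, suppose $u_n\in I_\lambda^{-L}$ with $\frac{\tilde K e^{u_n}\chi_{S^+}}{\int_{S^+}\tilde K e^{u_n}\,dV_{g_0}}\rightharpoonup\delta_x$ for some $x\in\Theta_\lambda$; Remark~\ref{rem:betauntox} yields $\beta(u_n)\to x$, and the continuity of $\pi$ together with $\pi(x)=x$ (since $x\in\Theta_\lambda$) gives $\Psi(u_n)=\pi(\beta(u_n))\to x$. The only genuinely delicate step in this scheme is the second one, in which Proposition~\ref{prop:betaJlambda} is invoked to rule out that $\beta$ can land on the most singular conical points at arbitrarily low energy; everything else is a routine gluing of standard deformation retractions.
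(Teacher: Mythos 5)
Your proof is correct and takes essentially the same route as the paper's: compose the map $\beta$ of Proposition~\ref{prop:beta} with a continuous retraction of $\overline{S^+}\setminus J_\lambda$ onto $\Theta_\lambda$, invoking Proposition~\ref{prop:betaJlambda} to guarantee $\beta(u)\notin J_\lambda$ on a sufficiently low sublevel and Remark~\ref{rem:betauntox} for the concentration property. The only (harmless) difference is presentational: you formalize the retraction via a tubular-neighborhood collapse of the collar of $\partial S^+$ plus radial projection off each $p_i\in J_\lambda$, whereas the paper describes the same piecewise construction verbally.
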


\begin{remark}\label{rem:Thetalambda}
Let us observe that $\Theta_{\lambda_0}$ is non contractible if and only if either $S^+$ is non contractible or $N^+>1$, namely \ref{H4} holds, or if $J_{\lambda_0} \neq \emptyset$, namely \ref{H5} holds. Moreover notice that $\Theta_{\lambda}=\Theta_{\lambda_0}$ for any $\lambda$ sufficiently close to $\lambda_0$.
\end{remark}

\begin{proof}
The proof mimics, with minor changes that of Proposition 4.4 of \cite{MalchiodiRuizSphere}, we just sketch it for reader's convenience.\\
Let us consider the map $\beta$ constructed in Proposition \ref{prop:beta}, then by Proposition \ref{prop:betaJlambda} if $\beta(u)\in J_\lambda$, $I_\lambda$ is uniformly bounded from below, therefore if $L$ is sufficiently large and if $u\in I^{-L}_\lambda$, then $\beta(u)\in \overline{S^+}\setminus J_\lambda$.

If $\beta(u)\in \Theta_\lambda$ we set $\Psi(u)=\beta(u)$, whereas if $\beta(u)\not\in \Theta_\lambda$, either it belongs to a subset of the form $B_{p_i}(\theta)\setminus\{p_i\}$ or it belongs to $\overline{S^+}\cap (S^+)^{\theta}$. In the first case we move $\beta(u)$ along the geodesic segment emanating from $p_i$ in the direction of $\beta(u)$ until we hit the boundary of $\Theta_\lambda$ and we set $\Psi(u)$ to be this point. In the second case instead we move $\beta(u)$ following the deformation retraction of $\overline{S^+}$ onto $S^+\setminus (\partial S^+)^\theta$. This procedure is well defined if $\theta$ is chosen sufficiently small and in particular such that $B_{p_i}(\theta)\subset S^+\setminus (\partial S^+)^\theta$ for any $p_i\in J_\lambda$.

The last statement follows from Remark \ref{rem:betauntox}.
\end{proof}

\begin{remark}\label{remnons2}
The arguments of the proof of Proposition~\ref{prop:beta}, Proposition~\ref{prop:betaJlambda} and Proposition \ref{prop:Psicontractible} work perfectly well for any compact surface $\Sigma$. The only modification needed is to consider, in Step 4 of Proposition \ref{prop:betaJlambda}, an isometrical embedding of $\Sigma$ in $\R^k$ rather than in $\R^3$.
\end{remark}
\

\subsection{Construction of continuous map \emph{into} low sublevels}
\

We will distinguish the case $\lambda\in(8\pi k,8\pi(k+1))$, $k\geq2$, from $\lambda\in(8\pi,16\pi)$.

\subsubsection{\boldmath${\lambda\in(8\pi k,8\pi (k+1)), k\geq 2}$}
\

Let $\lambda \in (8\pi k,8\pi (k+1))$, with $k\geq2$, and let $Y$ be a compact subset of $S^+\setminus\{p_1,\ldots,p_\ell\}$.

At first, we construct functions with arbitrary low energy.
For $b > 0$ to be fixed, small enough, we consider the smooth non-decreasing cut-off function $\chi_{b}: \mathbb{R}^+ \to \mathbb{R}^+$ such that

\beq\label{chib}
 \chi_{b}(t) =
  \begin{cases}
   t & \text{for } t \in \left[0,b\right], \\
   2b       & \text{for } t\geq 2b.
  \end{cases}
\eeq

For $\mu>0$ and $\sigma=\sum_{i=1}^k t_i \delta_{x_i} \in Y_k$, where $Y_k$ is the set of formal barycenters of order $k$ defined on $Y$, see \eqref{formbar}, we define

\beq\label{phimu}
 \varphi_{\mu,\sigma}: \Sd \to \R, \quad \ \varphi_{\mu,\sigma}(x)=\log\sum_{i=1}^k t_i\left(\frac{\mu}{1+(\mu \chi_{b}(dist(x,x_i)))^2}\right)^2.
\eeq
\
It can be easily seen that for $b$ sufficiently small and for $\mu$ large enough \linebreak $\left\{\varphi_{\mu,\sigma} \mid \sigma \in Y_k \right\} \subset X$, where $X$ is introduced in \eqref{X}, and, noticing that $\tilde K$ is strictly positive on $Y$, we can argue
as in \cite{Djad} to obtain the following result.
\begin{lemma}\label{notlimbl2}
Given $L>0$ there exist a small $b$ and a large $\mu(L)$ such that for $\mu\geq \mu(L)$, $\varphi_{\mu,\sigma} \in X$ and $I_\lambda(\varphi_{\mu,\sigma})<-L$ for any $\sigma\in Y_k$.
\end{lemma}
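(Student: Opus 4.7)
The plan is to estimate separately the three contributions to $I_\lambda(\varphi_{\mu,\sigma})$, following the strategy of \cite{Djad}. First I would describe the pointwise behavior of $\varphi_{\mu,\sigma}$: near each $x_j$ with $t_j > 0$ and for $\dist(x,x_j)\leq b$, the sum in \eqref{phimu} is dominated by the $j$-th term and
\[
\varphi_{\mu,\sigma}(x) = 2\log\mu - 2\log(1+\mu^2\dist(x,x_j)^2) + \log t_j + o(1);
\]
outside $\cup_i B_{x_i}(2b)$ the truncation forces $\chi_b(\dist(x,x_i))=2b$ for every $i$, giving $\varphi_{\mu,\sigma}(x) = -2\log\mu + O_b(1)$. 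Using the substitution $s=\mu\dist(x,x_j)$ in polar coordinates around each $x_j$, each bubble contributes $32\pi\log\mu + O_b(1)$ to $\int_{\Sd}|\nabla\varphi_{\mu,\sigma}|^2\,dV_{g_0}$, while the transition annuli and the complement contribute $O_b(1)$. Summing yields the uniform upper bound
\[
\int_{\Sd}|\nabla\varphi_{\mu,\sigma}|^2\,dV_{g_0}\leq 32\pi k\log\mu + C_b,
\]
which persists when some $x_i$'s coincide or collapse, since merged bubbles can only carry less energy.

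The same pointwise description immediately gives $\dashint_{\Sd}\varphi_{\mu,\sigma}\,dV_{g_0} = -2\log\mu + O_b(1)$, since the bubble cores have vanishing volume. For the nonlinear term, compactness of $Y\subset S^+\setminus\{p_1,\ldots,p_\ell\}$ together with assumption \ref{H3} (so that $h_m$ is smooth on a neighborhood of $Y$) ensures that $\tilde K$ is continuous and bounded below by a positive constant $\kappa_0$ on a neighborhood of $Y$. Writing
\[
e^{\varphi_{\mu,\sigma}}=\sum_i t_i\left(\frac{\mu}{1+\mu^2\chi_b(\dist(\cdot,x_i))^2}\right)^{\!2}
\]
and using the standard bubble computation $\int_{\Sd}\bigl(\mu/(1+\mu^2\chi_b(\dist(\cdot,x_i))^2)\bigr)^2\,dV_{g_0} = \pi + o_\mu(1)$ (the integrand concentrating at $x_i$ on scale $1/\mu$ and being $O(\mu^{-2})$ outside $B_{x_i}(b)$), one obtains
\[
\int_{\Sd}\tilde K e^{\varphi_{\mu,\sigma}}\,dV_{g_0} = \pi\sum_i t_i\tilde K(x_i) + o_\mu(1)\in\bigl[\tfrac{\pi\kappa_0}{2},\,C\bigr]
\]
for $\mu$ large and $b$ small. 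This simultaneously shows $\varphi_{\mu,\sigma}\in X$ and bounds the nonlinear term $-\lambda\log\int_{\Sd}\tilde K e^{\varphi_{\mu,\sigma}}\,dV_{g_0}$ by a constant $O_b(1)$ independent of $\sigma$.

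Combining the three estimates gives
\[
I_\lambda(\varphi_{\mu,\sigma}) \leq 16\pi k\log\mu - 2\lambda\log\mu + C_{b,Y} = 2(8\pi k - \lambda)\log\mu + C_{b,Y},
\]
with $C_{b,Y}$ independent of $\sigma\in Y_k$ and of $\mu$. Since $\lambda\in(8\pi k,8\pi(k+1))$ the coefficient of $\log\mu$ is strictly negative, so fixing $b$ small enough and then choosing $\mu(L)$ large enough yields $I_\lambda(\varphi_{\mu,\sigma})<-L$ uniformly in $\sigma\in Y_k$. The main subtlety in executing this plan is verifying the uniformity in $\sigma$: this relies on the facts that the Dirichlet energy upper bound is stated in terms of the maximal number $k$ of bubbles and is therefore robust under coincidences of the $x_i$'s, that $\kappa_0=\min_Y\tilde K>0$ provides a uniform positive lower bound for the nonlinear term, and that all remaining error constants depend only on $b$, $\|\tilde K\|_{C^0(Y)}$ and the geometry of $(\Sd,g_0)$ rather than on $\sigma$.
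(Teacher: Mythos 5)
Your proposal is correct and follows exactly the route the paper itself indicates: the authors do not reproduce the computation but point to \cite{Djad}, where precisely this bubble construction, the estimate $\int_{\Sd}|\nabla\varphi_{\mu,\sigma}|^2\,dV_{g_0}\leq (32\pi k+o_b(1))\log\mu$, the average $\dashint_{\Sd}\varphi_{\mu,\sigma}\,dV_{g_0}=-2\log\mu+O_b(1)$, and the $O(1)$ control of the nonlinear term (using $\tilde K\geq\kappa_0>0$ on the compact set $Y$ and the $O(\mu^{-2})$ smallness of $e^{\varphi_{\mu,\sigma}}$ away from the bubble cores, so that $S^-$ contributes only $o_\mu(1)$) are carried out, yielding $I_\lambda(\varphi_{\mu,\sigma})\leq 2(8\pi k-\lambda+o_b(1))\log\mu+C_b$ and hence the claim since $\lambda>8\pi k$. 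You have filled in the details of the cited argument rather than taken a different approach, and the execution, including the remarks on uniformity in $\sigma$ and robustness under coincidences of the $x_i$'s, is sound.
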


Remark that as a direct consequence of Lemma~\ref{notlimbl2}, $I_{\lambda}$ is not bounded from below. Moreover, by direct computations one can derive the following result.

\begin{lemma}\label{notlimbl3}
Let $\varphi_{\mu,\sigma}$ be defined in \eqref{phimu}. Then for any $\sigma \in Y_k$,
$$ \frac{e^{\varphi_{\mu,\sigma}}\chi_{S^+}}{\int_{S^+} e^{\varphi_{\mu,\sigma}}\,dV_{g_0}}  \weakto \sigma, \quad \text{as $\mu \to+\infty$}.$$
\end{lemma}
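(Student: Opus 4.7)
The plan is to exploit the bubble structure of $e^{\varphi_{\mu,\sigma}}$ and to show that the $i$-th bubble contributes, in the limit $\mu\to+\infty$, a Dirac mass at $x_i$ of weight $t_i$. Since $Y$ is a compact subset of $S^+\setminus\{p_1,\ldots,p_\ell\}$, the quantity $\dist(Y,\partial S^+)$ is strictly positive, so I would first choose $b$ small enough (smaller than the $b$ provided by Lemma~\ref{notlimbl2} if necessary) so that $B_{x_i}(2b)\subset S^+$ for every $x_i\in Y$. Writing $e^{\varphi_{\mu,\sigma}}=\sum_{i=1}^k t_i\,\phi^\mu_i$ with $\phi^\mu_i(x)=\bigl(\tfrac{\mu}{1+(\mu\chi_b(\dist(x,x_i)))^2}\bigr)^2$, the identity $\chi_b(r)\ge b$ for $r\ge b$ gives $\phi^\mu_i(x)\le\bigl(\tfrac{\mu}{1+\mu^2\rho^2}\bigr)^2\to 0$ uniformly on $\Sd\setminus B_{x_i}(\rho)$ for any fixed $\rho>0$.

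Next I would compute the total mass $\int_{S^+}\phi^\mu_i\,dV_{g_0}$. Since $\rho<b$ may be chosen arbitrarily small, $\chi_b(\dist(x,x_i))=\dist(x,x_i)$ on $B_{x_i}(\rho)$, and passing to normal coordinates $y=\exp_{x_i}^{-1}(x)$ at $x_i$, whose Jacobian equals $1+O(|y|^2)$, and rescaling $z=\mu y$ yields
$$
\int_{B_{x_i}(\rho)}\phi^\mu_i\,dV_{g_0}
=\int_{B_0(\mu\rho)}\frac{dz}{(1+|z|^2)^2}\,(1+o(1))
\;\xrightarrow[\mu\to+\infty]{}\;\int_{\R^2}\frac{dz}{(1+|z|^2)^2}=\pi.
$$
Together with the uniform decay outside $B_{x_i}(\rho)$ this gives $\int_{S^+}\phi^\mu_i\,dV_{g_0}\to\pi$ for each $i$, hence $\int_{S^+}e^{\varphi_{\mu,\sigma}}\,dV_{g_0}\to\pi\sum_i t_i=\pi$.

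Finally, to establish the weak convergence, I would pick any $\psi\in C^0(\Sd)$ and repeat the same localization/rescaling argument, now using the continuity of $\psi$ at $x_i$ to replace $\psi(x)$ by $\psi(x_i)+o(1)$ on $B_{x_i}(\rho)$ for $\rho$ small:
$$
\int_{S^+}\psi\,e^{\varphi_{\mu,\sigma}}\,dV_{g_0}
=\sum_{i=1}^k t_i\!\int_{B_{x_i}(\rho)}\!\psi\,\phi^\mu_i\,dV_{g_0}+o(1)
\;\xrightarrow[\mu\to+\infty]{}\;\pi\sum_{i=1}^k t_i\psi(x_i).
$$
Dividing by $\int_{S^+}e^{\varphi_{\mu,\sigma}}\,dV_{g_0}\to\pi$ yields $\int_{\Sd}\psi\,d\mu_\mu\to\sum_i t_i\psi(x_i)=\langle\sigma,\psi\rangle$, which is the desired weak convergence. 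There is no substantial obstacle here: the argument is just the standard bubble concentration computation; the only point to be careful about is that $b$ has to be small enough to simultaneously make $\chi_b$ the identity on each $B_{x_i}(b)$ and to keep those balls inside $S^+$, both of which are guaranteed by the choice of $Y$.
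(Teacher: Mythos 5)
Your proof is correct and is exactly the "direct computation" the paper alludes to without writing out: the paper states Lemma~\ref{notlimbl3} with no proof beyond the remark that it follows by direct computation, and the standard bubble-concentration argument you give (uniform decay of each $\phi^\mu_i$ off $B_{x_i}(\rho)$, rescaling in normal coordinates to get mass $\pi$ per bubble, and continuity of the test function at the $x_i$) is the intended one. The only cosmetic remark is that you do not actually need $B_{x_i}(2b)\subset S^+$; compactness of $Y$ in $S^+$ already gives $B_{x_i}(\rho)\subset S^+$ for $\rho$ small uniformly in $x_i\in Y$, which is all the argument uses.
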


\

\subsubsection{\boldmath${\lambda\in(8\pi,16\pi)}$}

\

Let us consider the set $\Theta_\lambda$ introduced in \eqref{Thetalambda}, which is non contractible by Remark \ref{rem:Thetalambda}, both under the assumptions of Theorem~\ref{thm:equation1} or Theorem~\ref{thm:equation2}, namely if \ref{H4} or \ref{H5} holds.

Let us now map $\Theta_\lambda$ into arbitrary low sublevels of $I_\lambda$.\\
 Let $\tilde{\alpha}=\displaystyle{\max_{\{i\leq \ell \,|\,p_i\notin J_{\lambda}\}}} \alpha_i$ or $\tilde{\alpha}=0$ if $J_\lambda=\{p_1,\dots,p_\ell \}$ or $\ell=0$. For any $\alpha \in \left(\tilde{\alpha},\frac{\l}{8\pi} -1\right)$, $\mu>0$ and $p \in \Theta_{\lambda}$, we define

\beq\label{phimu2}
 \varphi_{\mu,p,\alpha}: \Sd \to \R, \quad \ \varphi_{\mu,p,\alpha}(x)=2 \log\left(\frac{\mu^{1+\alpha}}{1+(\mu \chi_{b}(dist(x,p)))^{2(1+\alpha)}}\right).
\eeq

\begin{lemma}\label{notlimb}
Given any $L>0$, there exist a small $b$ and a large $\mu(L)$ such that for any $\mu \geq \mu(L)$, $\varphi_{\mu,p,\alpha} \in X$, and $I_\lambda(\varphi_{\mu,p,\alpha})<-L$ for any $p\in \Theta_{\lambda}$.
\end{lemma}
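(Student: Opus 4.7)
The plan is to compute, as $\mu\to+\infty$, the leading asymptotic expansion of each of the three summands in $I_\lambda(\varphi_{\mu,p,\alpha})$, and then to verify that the resulting coefficient of $\log\mu$ is strictly negative, uniformly in $p\in\Theta_\lambda$.

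First I would work in normal coordinates at $p$, set $r=\dist(\cdot,p)$, and exploit $\chi_b(r)=r$ on $B_p(b)$ together with the rescaling $s=\mu r$ to compute, exactly as in the regular Nirenberg case treated in \cite{Djad},
\begin{align*}
\tfrac12\int_\Sd|\nabla\varphi_{\mu,p,\alpha}|^2\,dV_{g_0}&=16\pi(1+\alpha)^2\log\mu+O_b(1),\\
\tfrac{\lambda}{|\Sd|}\int_\Sd\varphi_{\mu,p,\alpha}\,dV_{g_0}&=-2\lambda(1+\alpha)\log\mu+O_b(1),
\end{align*}
with error terms bounded uniformly in $p\in\Theta_\lambda$. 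Setting
$$
\alpha^*(p)=\begin{cases}\alpha_i & \text{if }p=p_i,\ i\leq\ell,\\ 0& \text{otherwise,}\end{cases}
$$
we have $\tilde K(x)\simeq c_p\,\dist(x,p)^{2\alpha^*(p)}$ near $p$ with $c_p>0$, and the same bubble substitution (the contribution from $\Sd\setminus B_p(2b)$ being only $O(\mu^{-2(1+\alpha)})$) yields
$$
\log\int_\Sd\tilde K\,e^{\varphi_{\mu,p,\alpha}}\,dV_{g_0}=2(\alpha-\alpha^*(p))\log\mu+O(1),
$$
where convergence of the rescaled integral at infinity is ensured by $\alpha>\alpha^*(p)$; indeed, if $p=p_i\in\Theta_\lambda$ then $p_i\notin J_\lambda$, so $\alpha_i\leq\tilde\alpha<\alpha$. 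Combining the three contributions,
$$
I_\lambda(\varphi_{\mu,p,\alpha})=2\bigl[8\pi(1+\alpha)^2-\lambda(1+2\alpha-\alpha^*(p))\bigr]\log\mu+O(1).
$$

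The remaining algebraic step is to check that the bracketed coefficient is strictly negative. Viewing it as the quadratic $q(t)=8\pi t^2-2\lambda t+\lambda(1+\alpha^*(p))$ in $t=1+\alpha$, the discriminant $4\lambda[\lambda-8\pi(1+\alpha^*(p))]$ is positive and the two positive roots straddle the interval $\bigl(1+\alpha^*(p),\lambda/(8\pi)\bigr)$, as soon as $\lambda>8\pi(1+\alpha^*(p))$. This inequality holds in both cases: if $\alpha^*(p)=0$ it is simply $\lambda>8\pi$; if $\alpha^*(p)=\alpha_i$ with $p_i\in\Theta_\lambda$, it follows from $p_i\notin J_\lambda$ (so $\lambda\geq 8\pi(1+\alpha_i)$) together with $\lambda\notin\Gamma(\diva_\ell)$, which rules out equality. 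Since the constraint $\alpha\in(\tilde\alpha,\lambda/(8\pi)-1)$ forces $1+\alpha$ to lie in $\bigl(1+\alpha^*(p),\lambda/(8\pi)\bigr)$, we obtain $q(1+\alpha)<0$, and because $\alpha^*(p)$ takes values in the finite set $\{0\}\cup\{\alpha_i:p_i\in\Theta_\lambda\}$ this negativity is uniform in $p$. Consequently $\mu(L)$ can be chosen so that $I_\lambda(\varphi_{\mu,p,\alpha})<-L$ for every $\mu\geq\mu(L)$ and every $p\in\Theta_\lambda$.

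The main obstacle in making this rigorous is the uniform control of the $O(1)$ remainders, most delicate for $\log\int_\Sd\tilde K e^{\varphi_{\mu,p,\alpha}}\,dV_{g_0}$, when $p$ approaches (without coinciding with) a singularity $p_i\in\Theta_\lambda$: the local profile of $\tilde K$ then transitions from the regular behaviour $\tilde K(p)>0$ to the conical one $\sim\dist(\cdot,p_i)^{2\alpha_i}$, and the two regimes produce different constants. Compactness of $\Theta_\lambda$, the Lipschitz regularity of $\tilde K$ away from $\{p_1,\ldots,p_m\}$, and the explicit weight $\dist(x,p_j)^{2\alpha_j}$ near each singularity allow one to absorb these fluctuations via a dominated-convergence argument on the rescaled integrands, yielding the required uniform smallness of the errors and hence the conclusion $I_\lambda(\varphi_{\mu,p,\alpha})<-L$ uniformly on $\Theta_\lambda$.
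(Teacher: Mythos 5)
Your computation of the three contributions to $I_\lambda(\varphi_{\mu,p,\alpha})$ is correct and follows the same route as the paper, which simply cites the detailed estimates of \cite{Djad,DjadliMalchiodi,MalchiodiRuizSphere} (adding only the remark that $b<\theta/4$ keeps $B_p(b)\subset S^+$). The Dirichlet and mean-value asymptotics, the substitution giving the logarithmic term, and the algebraic check that $q(1+\alpha)=8\pi(1+\alpha)^2-2\lambda(1+\alpha)+\lambda(1+\alpha^*(p))$ is negative on $(1+\tilde\alpha,\lambda/8\pi)$ are all right (and the strictness $\lambda>8\pi(1+\alpha_i)$ does come from $\lambda\notin\Gamma(\diva_\ell)$, which is implicit in the non-emptiness of the interval $(\tilde\alpha,\lambda/8\pi-1)$).

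One point is worth sharpening, since you flag it yourself. The two-sided expansion $\log\int_{\Sd}\tilde K e^{\varphi_{\mu,p,\alpha}}=2(\alpha-\alpha^*(p))\log\mu+O(1)$ with a \emph{uniform} $O(1)$ is actually false: if $p\to p_i$ with $p\neq p_i$, then $\alpha^*(p)=0$ but the constant degenerates like $\log\tilde K(p)\sim 2\alpha_i\log\dist(p,p_i)\to-\infty$, so the formula is discontinuous across $p=p_i$. Dominated convergence does not repair this; the fluctuation is real. What saves the argument is that you only need a \emph{one-sided} lower bound, and this one is uniform: restricting to the half of $B_p(b)$ where $\dist(x,p_i)\geq\dist(x,p)$ (so $\tilde K(x)\geq c\,\dist(x,p)^{2\alpha_i}$ there) gives
\[
\int_{\Sd}\tilde K e^{\varphi_{\mu,p,\alpha}}\,dV_{g_0}\geq c\,\mu^{2(\alpha-\tilde\alpha)}
\]
for all $p\in\Theta_\lambda$ and $\mu$ large, with $c>0$ independent of $p$. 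Plugging this single bound into the functional (i.e. running your quadratic check only with $\alpha^*=\tilde\alpha$) gives $I_\lambda(\varphi_{\mu,p,\alpha})\leq 2\,q(1+\alpha)\log\mu+O(1)$ with $q(1+\alpha)<0$, which is the clean uniform statement. With this substitution your proof is complete and matches the paper's intent.
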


\begin{proof}
The proof follows combining results of \cite{Djad,DjadliMalchiodi,MalchiodiRuizSphere}. We just point out that for $b<\frac{\theta}{4}$, where $\theta$ appears in the definition \eqref{Thetalambda} of $\Theta_\lambda$, $B_p(b)$ is compactly contained in $S^+$, thus we can use the detailed computations in \cite{MalchiodiRuizSphere} in order to estimate the logarithmic term of the functional.
\end{proof}

In particular, the previous result shows that $I_{\lambda}$ is unbounded from below. Moreover, as stated in the following result, the unit measures induced by $\tilde{K}e^{\varphi_{\mu,p,\alpha}}\chi_{S^+}$ concentrate around $p$ as $\mu\to+\infty$.

\begin{lemma}\label{notlimb22}
Let $\varphi_{\mu,p,\alpha}$ be defined in \eqref{phimu2}. Then for any $p\in\Theta_{\lambda}$,
$$ \frac{\tilde{K}e^{\varphi_{\mu,p,\alpha}}\chi_{S^+}}{\int_{S^+} \tilde{K}e^{\varphi_{\mu,p,\alpha}}\,dV_{g_0}}  \weakto \delta_p, \quad \text{as $\mu \to+\infty$}.$$
\end{lemma}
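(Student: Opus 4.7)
The plan is to reduce the weak convergence to the pointwise statement that for every $f\in C^0(\Sd)$,
\[
\frac{\int_{S^+} f\,\tilde K\, e^{\varphi_{\mu,p,\alpha}}\,dV_{g_0}}{\int_{S^+} \tilde K\, e^{\varphi_{\mu,p,\alpha}}\,dV_{g_0}}\;\longrightarrow\; f(p),\qquad \mu\to+\infty.
\]
Since $p\in\Theta_\lambda$ satisfies $\dist(p,\partial S^+)\ge\theta$, for every fixed $\delta\in(0,b)$ with $b<\theta/4$ we have $B_p(\delta)\subset S^+$ and $\chi_b(\dist(x,p))=\dist(x,p)$ on $B_p(b)$, so on $B_p(\delta)$ the function $e^{\varphi_{\mu,p,\alpha}}$ is the standard bubble profile of parameter $\alpha$ centered at $p$. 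The strategy is to show that $\tilde K e^{\varphi_{\mu,p,\alpha}}\chi_{S^+}$ concentrates mass inside $B_p(\delta)$ and then pass to the blow-up limit by rescaling.

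For the tail, on $S^+\setminus B_p(\delta)$ the non-decreasing cut-off $\chi_b$ satisfies $\chi_b(\dist(x,p))\ge\delta$, hence
\[
e^{\varphi_{\mu,p,\alpha}}(x)\;\le\;\frac{\mu^{2(1+\alpha)}}{\bigl(1+(\mu\delta)^{2(1+\alpha)}\bigr)^2}\;\le\; C_\delta\,\mu^{-2(1+\alpha)}.
\]
Since $\tilde K\in L^1(\Sd)$ and $f$ is bounded, both the numerator and denominator tails over $S^+\setminus B_p(\delta)$ are $O(\mu^{-2(1+\alpha)})$, uniformly in $\mu$.

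Inside $B_p(\delta)$ I would work in geodesic normal coordinates around $p$ and perform the substitution $y=\mu x$, so that for any bounded continuous $g$
\[
\int_{B_p(\delta)} g\,\tilde K\,e^{\varphi_{\mu,p,\alpha}}\,dV_{g_0}\;=\;\mu^{-2}\int_{B_0(\mu\delta)} g(y/\mu)\,\tilde K(y/\mu)\,\frac{\mu^{2(1+\alpha)}(1+o(1))}{(1+|y|^{2(1+\alpha)})^2}\,dy.
\]
If $p\neq p_j$ for every $j\in\{1,\dots,m\}$, then $\tilde K$ is continuous and strictly positive at $p$, and dominated convergence gives that the denominator equals $\tilde K(p)\,C\,\mu^{2\alpha}(1+o_\mu(1))$ while the numerator equals $f(p)\,\tilde K(p)\,C\,\mu^{2\alpha}(1+o_\delta(1)+o_\mu(1))$, with $C=\int_{\R^2}(1+|y|^{2(1+\alpha)})^{-2}\,dy<\infty$ since $\alpha>0$. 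If instead $p=p_j$ for some $j\le\ell$ with $p_j\notin J_\lambda$, then $\alpha_j\le\tilde\alpha<\alpha$ and $\tilde K(x)\simeq c_j\,\dist(x,p_j)^{2\alpha_j}$ near $p_j$; the rescaled integrand picks up an extra factor $c_j\,|y|^{2\alpha_j}\mu^{-2\alpha_j}$, and the limiting integral $\int_{\R^2}|y|^{2\alpha_j}(1+|y|^{2(1+\alpha)})^{-2}\,dy$ is finite because the integrand decays like $|y|^{2\alpha_j-4(1+\alpha)}$ at infinity, which is integrable precisely when $\alpha_j<1+2\alpha$, a condition guaranteed by $\alpha>\tilde\alpha\ge\alpha_j$.

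Combining the two steps, in either case the leading order $\mu^{2\alpha}$ (resp.\ $\mu^{2(\alpha-\alpha_j)}$) of the $B_p(\delta)$ contribution dominates the $O(\mu^{-2(1+\alpha)})$ tail, so the quotient equals $(f(p)+o_\delta(1))(1+o_\mu(1))$; sending $\mu\to+\infty$ and then $\delta\to 0$ yields the claim. The delicate point is the singular case $p=p_j$, where $\tilde K$ vanishes at the concentration point: the constraint $\alpha\in(\tilde\alpha,\tfrac{\lambda}{8\pi}-1)$ hard-wired into \eqref{phimu2} is exactly what is needed so that the rescaled bubble has positive finite total mass (neither $0$ nor $+\infty$), which is the reason $\tilde\alpha$ was chosen as it was.
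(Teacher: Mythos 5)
Your argument is correct and is essentially the computation the paper delegates to Lemma 5.2 of \cite{MalchiodiRuizSphere}: split off the tail outside $B_p(\delta)$, where $e^{\varphi_{\mu,p,\alpha}}=O(\mu^{-2(1+\alpha)})$, and rescale the bubble inside. In particular you correctly identified the one genuine modification needed in the present setting, namely the case $p=p_j\notin J_\lambda$ where $\tilde K$ vanishes like $\dist(\cdot,p_j)^{2\alpha_j}$ at the concentration point, and why the choice $\alpha>\tilde\alpha$ guarantees that the limiting rescaled integral is finite and nonzero.
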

\begin{proof}
See Lemma 5.2 of \cite{MalchiodiRuizSphere}. Just minor modifications are needed.
\end{proof}

\

\section{Proofs of Theorems~\ref{thm:equation1}, \ref{thm:equation2} and \ref{nonexistence}}\label{sproof}
\setcounter{equation}{0}

In this section, we employ the previous results to prove Theorems~\ref{thm:equation1} and \ref{thm:equation2}. The proof is based on a min-max argument relying on the non trivial topology of the low sublevels of $I_\lambda$, which inherit the non contractibility of the sets $\Gamma_k$, $(P^{N^+})_k$ or $\Theta_\lambda$ (defined in Proposition \ref{prop:Psinoncontractible} and in \eqref{Thetalambda} respectively) depending on the cases.


At this point, we are ready to prove Theorems~\ref{thm:equation1} and \ref{thm:equation2}. Let us introduce some notations related to the min-max scheme in order to unify the proofs of Theorem~\ref{thm:equation1} and Theorem~\ref{thm:equation2}
$$
\mathcal{C} =
  \begin{cases}
    Y_k  & \text{if $\lambda_0\in(8\pi k,8\pi(k+1)), \, k\geq2,$} \\
    \Theta_{\lambda_0}   & \text{if $\lambda_0\in(8\pi,16\pi),$}
  \end{cases}
$$
where $\Theta_{\lambda_0}$ is defined in \eqref{Thetalambda} and
$$
Y=
\begin{cases}
P^{N^+}  & \text{if $N^+>k$,} \\
   \Gamma   & \text{if $S^+$ has a connected component which is non-simply connected },
  \end{cases}
$$
with $P^{N^+}$ and $\Gamma$ introduced in Proposition~\ref{prop:Psinoncontractible}.\\
Moreover, we set
$$
\omega =
  \begin{cases}
    \sigma \in Y_k & \text{if $\lambda_0\in(8\pi k,8\pi(k+1)), \, k\geq2$,}  \\
   p \in \Theta_{\lambda_0}      & \text{if $\lambda_0\in(8\pi,16\pi)$},
  \end{cases}
$$
  $$
\tilde{\varphi}_{\mu,\omega} =
  \begin{cases}
   \varphi_{\mu,\sigma}  & \text{if $\lambda_0\in(8\pi k,8\pi(k+1)), \, k\geq2$,}  \\
   \varphi_{\mu,p,\alpha}       & \text{if $\lambda_0\in(8\pi,16\pi)$}.
  \end{cases}
$$

Next, we define the topological cone $\hat{\mathcal C}$ over $\mathcal C$ as
$$
\hat{\mathcal{C}}=\left(\mathcal{C}\times\left[0,1\right]\right)/\left(\mathcal{C}\times\left\{1\right\}\right),
$$
where the equivalence relation identifies all the points in $\mathcal{C}\times\left\{1\right\}$. We will denote by $[\omega,t]$ an element of $\hat{\mathcal{C}}$, where $\omega\in\mathcal{C}$ and $t\in[0,1]$, and sometimes, with an abuse of notation, we will identify $[\omega,0]$ with $\omega$.

Then let us choose $\eps>0$ such that $(\lambda_0-\eps,\lambda_0+\eps)\subset(8\pi k,8\pi(k+1))$, $J_\lambda=J_{\lambda_0}$ and so $\Theta_\lambda=\Theta_{\lambda_0}$ for any $\lambda\in(\lambda_0-\eps,\lambda_0+\eps)$.
\\
Next, let us introduce the following class
$$
\mathcal{G}_{\mu,\lambda}=\{g:\hat{\mathcal{C}} \to X \, | \, \text{$g$ is continuous and }g([\omega,0])=\tilde{\varphi}_{\mu,\omega} \text{ for every $\omega\in\mathcal{C}$} \}.
$$
Notice that $\mathcal{G}_{\mu,\lambda}\neq \emptyset$, indeed if we fix $v\in X$ the map $g_v:\hat{\mathcal{C}}\to X$, defined as $g_v([\omega,t])=\log(te^v+(1-t)e^{\tilde{\varphi}_{\mu,\omega}})$, belongs to $\mathcal{G}_{\mu,\lambda}$.

Let us now fix $L>0$ so large such that both Proposition \ref{prop:Psinoncontractible} and Proposition \ref{prop:Psicontractible} apply and in turn $\mu>0$ so large that $I_\lambda(\tilde{\varphi}_{\mu,\omega})<-L$ for any $\omega\in\mathcal C$ and any $\lambda\in(\lambda_0-\eps,\lambda_0+\eps)$.

The latter choice is possible in view of Lemma \ref{notlimbl2} and Lemma \ref{notlimb}.

Next proposition will be crucial in our min-max argument.

\begin{proposition}\label{prophomot}
If $\alpha_1,\ldots,\alpha_m>0$ and under assumptions \ref{H1}, \ref{H2}, \ref{H3} and \ref{H4} or \ref{H5}, for any $\lambda\in(\lambda_0-\eps,\lambda_0+\eps)$ and any $g\in \mathcal{G}_{\mu,\lambda}$,  then the composition $\left.\Psi \circ g\right|_{\mathcal{C}}$ is homotopically equivalent to the identity map, where $\Psi$ is defined in Proposition \ref{prop:Psinoncontractible} for $k\geq2$ and Proposition \ref{prop:Psicontractible} for $k=1$. Moreover, $g(\mathcal{C}) $ is not contractible in $I_\lambda^{-L}$.
\end{proposition}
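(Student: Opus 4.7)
Since every $g \in \mathcal{G}_{\mu,\lambda}$ satisfies $g([\omega,0]) = \tilde{\varphi}_{\mu,\omega}$, we have $\Psi \circ g|_{\mathcal{C}} = \Psi \circ \tilde{\varphi}_{\mu,\cdot} : \mathcal{C} \to \mathcal{C}$, so the statement is independent of the chosen $g$. The plan is thus to show: (I) $\Psi \circ \tilde{\varphi}_{\mu,\cdot}$ is homotopic to $\text{id}_{\mathcal{C}}$ provided $\mu$ is large enough; (II) the non-contractibility of $g(\mathcal{C})$ in $I_\lambda^{-L}$ follows from (I) plus the non-contractibility of $\mathcal{C}$ itself.

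For part (I), the starting point is the convergence of the concentration measures. In the case $k \geq 2$, Lemma~\ref{notlimbl3} gives $\frac{e^{\varphi_{\mu,\sigma}}\chi_{S^+}}{\int_{S^+} e^{\varphi_{\mu,\sigma}}\,dV_{g_0}} \rightharpoonup \sigma$ as $\mu \to +\infty$, which, combined with the property of $\Psi$ stated in Proposition~\ref{prop:Psinoncontractible}, yields $\Psi(\varphi_{\mu,\sigma}) \to \sigma$ in $Y_k$. In the case $k=1$, Lemma~\ref{notlimb22} and the analogous property of $\Psi$ in Proposition~\ref{prop:Psicontractible} give $\Psi(\varphi_{\mu,p,\alpha}) \to p$ in $\Theta_{\lambda_0}$. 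Since $\mathcal{C}$ is compact and everything in sight depends continuously on $\omega$, a routine compactness argument upgrades this pointwise convergence to uniform convergence on $\mathcal{C}$. Hence, taking $\mu$ larger if necessary, $\Psi \circ \tilde{\varphi}_{\mu,\cdot}$ lies in a prescribed neighborhood of $\text{id}_{\mathcal{C}}$.

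To turn this $C^0$-closeness into an actual homotopy inside $\mathcal{C}$, I would distinguish the two cases. When $\mathcal{C} = \Theta_{\lambda_0}$, the target is a compact subset of $\mathbb{S}^2$ whose structure (manifold with regular boundary, obtained from $\overline{S^+}$ by removing small balls around the $p_i \in J_{\lambda_0}$ and a collar $(\partial S^+)^\theta$) admits a tubular neighborhood retraction; for $\mu$ large enough each $\Psi(\tilde{\varphi}_{\mu,p,\alpha})$ lies in such a neighborhood of $p$, and the straight-line homotopy in $\mathbb{R}^3$ composed with the projection onto $\Theta_{\lambda_0}$ (refining the retraction used in Proposition~\ref{prop:Psicontractible}) gives the desired path. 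When $\mathcal{C} = Y_k$, the target is the set of formal barycenters, which carries the standard simplicial structure; identifying $Y_k$ with its image in the space of unit measures and invoking the usual fact that two maps into $Y_k$ which are close in the $C^1(\mathbb{S}^2)^*$-metric can be connected by a convex-combination homotopy within $Y_k$ (this is the classical argument used e.g.\ in \cite{Djad,BarDemMal}), we produce the homotopy. This step, namely the explicit interpolation in $Y_k$, is the main technical obstacle and the one requiring the most care.

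Part (II) is then a standard contradiction argument. Assume $g(\mathcal{C})$ is contractible in $I_\lambda^{-L}$, so that there exists a continuous $H: \mathcal{C} \times [0,1] \to I_\lambda^{-L}$ with $H(\cdot,0) = g|_{\mathcal{C}}$ and $H(\cdot,1) \equiv \bar u$ for some $\bar u \in I_\lambda^{-L}$. Since the values of $H$ lie in $I_\lambda^{-L}$, the composition $\Psi \circ H : \mathcal{C} \times [0,1] \to \mathcal{C}$ is well defined and continuous, and provides a homotopy from $\Psi \circ g|_{\mathcal{C}}$ to the constant map $\Psi(\bar u)$. Combining this with the homotopy from part (I), we deduce that $\text{id}_{\mathcal{C}}$ is homotopic to a constant, i.e.\ $\mathcal{C}$ is contractible. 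This contradicts Remark~\ref{rem:PN+noncontr} under assumption \ref{H4} (in the case $k\geq 2$), and Remark~\ref{rem:Thetalambda} under \ref{H4} or \ref{H5} (in the case $k=1$), concluding the proof.
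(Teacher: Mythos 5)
Your part (II) is exactly the paper's (the paper merely says the conclusion ``follows easily''; your argument via $\Psi\circ H$ is the intended one and is correct). For part (I), however, you take a genuinely different and more laborious route than the paper. The paper does not freeze $\mu$ and then interpolate inside $\mathcal C$: it uses the concentration parameter itself as the homotopy parameter, defining $H(t,\omega)=\Psi\circ\tilde\varphi_{t,\omega}$ for $t\in[\mu,+\infty)$ and observing, via Lemma~\ref{notlimbl3} (resp.\ Lemma~\ref{notlimb22}) together with the sequential property of $\Psi$ in Proposition~\ref{prop:Psinoncontractible} (resp.\ Proposition~\ref{prop:Psicontractible}), that $H(t,\omega)\to\omega$ as $t\to+\infty$. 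This one-line homotopy completely bypasses the ``$C^0$-close implies homotopic'' step that you identify as the main technical obstacle, and it is the same trick used in \cite{Djad,DjadliMalchiodi,MalchiodiRuizSphere}.

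Two points in your part (I) would need repair if you insisted on your route. First, the ``convex-combination homotopy within $Y_k$'' does not exist as stated: a convex combination $(1-s)\sum_i t_i\delta_{x_i}+s\sum_j s_j\delta_{y_j}$ of two elements of $Y_k$ is in general supported on up to $2k$ points and leaves $Y_k$. The correct statement is that $Y_k$ (the $(k-1)$-skeleton of a simplex, or $\Gamma_k$ a finite stratified set) is a compact ANR, so maps sufficiently close in the $C^1(\Sd)^*$-metric are homotopic; this is what one must invoke, not convexity. Second, ``pointwise convergence plus compactness of $\mathcal C$ implies uniform convergence'' is false in general for continuous maps; the uniformity must instead be extracted from a diagonal/contradiction argument (take $\mu_n\to+\infty$, $\omega_n\to\omega$ and apply the sequential property of $\Psi$ to $u_n=\tilde\varphi_{\mu_n,\omega_n}$), which is also precisely the continuity-at-infinity statement needed to make the paper's homotopy $H$ continuous on the compactified parameter interval. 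With these two fixes your argument closes, but the paper's parametrization by $t$ is both shorter and what the cited literature actually does.
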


\begin{proof}
Let us introduce the homotopy \begin{align*}
H:&\left[\mu,+\infty\right) \times \mathcal{C} \longrightarrow \mathcal{C}\\
& \;\;\;\;\;\;\;\;\;\;(t,\omega) \;\;\longmapsto H(t,\omega)= \Psi \circ \tilde{\varphi}_{t,\omega}.
\end{align*}
Combining Lemma \ref{notlimbl3} with Proposition \ref{prop:Psinoncontractible} or Lemma \ref{notlimb22} with Proposition \ref{prop:Psicontractible} we obtain that
$$
H(t,\omega)\longrightarrow \omega\qquad\text{as $t\to+\infty$,}
$$
so $H$ realizes the desired homotopy equivalence.\\
In turn, by virtue of assumption \ref{H4} or \ref{H5} and our choice of $\eps$, $\mathcal{C}$ is not contractible, see Remark \ref{rem:PN+noncontr} and Remark \ref{rem:Thetalambda}. The above assertion implies easily that $g(\mathcal{C})$ is also not contractible.
\end{proof}

We now define the min-max value
$$
\overline{\mathcal{G}}_{\mu,\l}=\displaystyle{\inf_{g \in \mathcal{G}_{\mu,\l}} \sup_{z\in\hat{\mathcal{C}}} I_{\lambda}(g(z))}.
$$

\begin{lemma}\label{leminmax}
If $\alpha_1,\ldots,\alpha_m>0$ and under assumptions \ref{H1}, \ref{H2}, \ref{H3} and \ref{H4} or \ref{H5}, $\overline{\mathcal{G}}_{\mu,\l}\geq -L$ for any $\lambda\in(\lambda_0-\eps,\lambda_0+\eps)$.
\end{lemma}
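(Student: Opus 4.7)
The plan is to argue by contradiction using the topological information supplied by Proposition~\ref{prophomot}. Suppose that $\overline{\mathcal{G}}_{\mu,\lambda}<-L$ for some $\lambda\in(\lambda_0-\eps,\lambda_0+\eps)$. Then by the definition of the infimum there exists $g\in\mathcal{G}_{\mu,\lambda}$ such that
\[
\sup_{z\in\hat{\mathcal{C}}} I_\lambda(g(z))<-L,
\]
and therefore the entire image $g(\hat{\mathcal{C}})$ is contained in the sublevel $I_\lambda^{-L}$.

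Next I would use the fact that $\hat{\mathcal{C}}$ is contractible: it is the cone over $\mathcal{C}$, so the obvious homotopy $H:[0,1]\times\hat{\mathcal{C}}\to\hat{\mathcal{C}}$ defined by $H(s,[\omega,t])=[\omega,(1-s)t+s]$ deformation-retracts $\hat{\mathcal{C}}$ onto its apex. Composing this with $g$ gives a homotopy inside $g(\hat{\mathcal{C}})\subset I_\lambda^{-L}$ between $g_{|\mathcal{C}\times\{0\}}$ and the constant map with value $g([\,\cdot\,,1])$. Since under the identification $[\omega,0]\sim\omega$ we have $g([\omega,0])=\tilde\varphi_{\mu,\omega}$, this shows that $g(\mathcal{C})=\{\tilde\varphi_{\mu,\omega}\mid \omega\in\mathcal{C}\}$ is contractible inside $I_\lambda^{-L}$.

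The final step is to invoke Proposition~\ref{prophomot}, which, under the hypotheses \ref{H1}--\ref{H3} and \ref{H4} or \ref{H5}, asserts precisely that $g(\mathcal{C})$ is \emph{not} contractible in $I_\lambda^{-L}$ (this comes from the fact that $\Psi\circ g_{|\mathcal{C}}$ is homotopically equivalent to the identity on $\mathcal{C}$, while $\mathcal{C}$ itself is non-contractible by Remark~\ref{rem:PN+noncontr} or Remark~\ref{rem:Thetalambda}). This contradicts the conclusion of the previous paragraph and forces $\overline{\mathcal{G}}_{\mu,\lambda}\geq -L$, as desired. There is no serious technical obstacle here: the only point that requires a little care is making sure the contractibility of $g(\mathcal{C})$ actually takes place inside the sublevel $I_\lambda^{-L}$, but this is ensured by the bound $\sup_{\hat{\mathcal{C}}} I_\lambda\circ g<-L$, which is preserved along the cone homotopy since the homotopy is realized by $g$ itself.
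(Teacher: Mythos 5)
Your argument is correct and is essentially the paper's own proof, merely recast as a contradiction: the paper observes directly that for every $g\in\mathcal{G}_{\mu,\lambda}$ the set $g(\mathcal{C})$ is contractible in $g(\hat{\mathcal{C}})$ (since $\mathcal{C}$ is contractible in the cone $\hat{\mathcal{C}}$) but not contractible in $I_\lambda^{-L}$ by Proposition~\ref{prophomot}, so $g(\hat{\mathcal{C}})\nsubseteq I_\lambda^{-L}$ and hence $\sup_{\hat{\mathcal{C}}}I_\lambda\circ g>-L$. No gaps.
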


\begin{proof}
For any $g \in \mathcal{G}_{\mu,\lambda}$, clearly $g(\mathcal{C})$ is contractible in $g(\hat{\mathcal{C}})$, being $\mathcal{C}$ contractible in $\hat{\mathcal{C}}$. On the other hand, by Proposition~\ref{prophomot}, $g(\mathcal{C})$ is not contractible in $I_{\lambda}^{-L}$, so that, $g(\hat{\mathcal{C}}) \nsubseteq I_{\lambda}^{-L}$, namely there exists $z \in \hat{\mathcal{C}}$ such that $I_{\lambda}(g(z)) > -L$.
\end{proof}

Consequently Lemma~\ref{leminmax} together with our choice of $L$ and $\mu$ imply that $\overline{\mathcal{G}}_{\mu,\l}>\max \{I_{\l}(g_{\mu}(z)) : \, z \in \mathcal{C} \}$ and this provides a min-max structure. Unfortunately, the Palais--Smale condition is not known to hold for $I_{\lambda}$ and to overcome this difficulty we will use the well-known monotonicity method of Struwe, introduced firstly in \cite{Struwe}. Since this argument has been applied many times even for this functional, as for instance in \cite{Djad,DjadliMalchiodi,MalchiodiRuizSphere}, we will be sketchy. A starting point to apply this trick is the following easy monotonicity result.

\begin{lemma}\label{monomini}
The function $\lambda \mapsto \frac{\overline{\mathcal{G}}_{\mu,\l}}{\lambda}$ is monotonically decreasing.
\end{lemma}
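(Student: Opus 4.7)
The plan is to reduce the monotonicity of $\lambda\mapsto\overline{\mathcal G}_{\mu,\lambda}/\lambda$ to a pointwise inequality for the scaled functional $I_\lambda(u)/\lambda$, and then pass first to the supremum over $z\in\hat{\mathcal C}$ and then to the infimum over admissible maps $g$. First I would rewrite
$$\frac{I_\lambda(u)}{\lambda}=\frac{1}{2\lambda}\int_{\Sd}|\nabla u|^2\,dV_{g_0}+\frac{1}{|\Sd|}\int_{\Sd}u\,dV_{g_0}-\log\int_{\Sd}\tilde K e^u\,dV_{g_0},$$
and observe that only the Dirichlet term depends on $\lambda$, through the decreasing factor $1/\lambda$. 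Hence for every $u\in X$ and every $\lambda_1<\lambda_2$ in the fixed neighborhood $(\lambda_0-\eps,\lambda_0+\eps)$ the pointwise inequality $I_{\lambda_1}(u)/\lambda_1\geq I_{\lambda_2}(u)/\lambda_2$ holds.

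The second step is to check that the admissible class $\mathcal G_{\mu,\lambda}$ is actually independent of $\lambda$ throughout $(\lambda_0-\eps,\lambda_0+\eps)$: the test profiles $\tilde\varphi_{\mu,\omega}$ defined in \eqref{phimu} and \eqref{phimu2} only depend on $\mu$, $\omega$ and (in the second case) on a fixed exponent $\alpha$, which we can pick once and for all in $(\tilde\alpha,\tfrac{\lambda_0}{8\pi}-1)$ and which stays admissible for every $\lambda$ in the neighborhood, after possibly shrinking $\eps$. Thus $\mathcal G_{\mu,\lambda_1}=\mathcal G_{\mu,\lambda_2}$, so that every $g$ that is a competitor for one level is a competitor for the other.

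Finally, combining the two facts, for any $g\in\mathcal G_{\mu,\lambda_1}=\mathcal G_{\mu,\lambda_2}$ and any $z\in\hat{\mathcal C}$, the pointwise bound yields
$$\frac{\sup_{z\in\hat{\mathcal C}}I_{\lambda_1}(g(z))}{\lambda_1}\geq\frac{\sup_{z\in\hat{\mathcal C}}I_{\lambda_2}(g(z))}{\lambda_2}\geq\frac{\overline{\mathcal G}_{\mu,\lambda_2}}{\lambda_2},$$
and taking the infimum over $g$ on the left-hand side gives $\overline{\mathcal G}_{\mu,\lambda_1}/\lambda_1\geq\overline{\mathcal G}_{\mu,\lambda_2}/\lambda_2$, which is the desired conclusion. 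The argument is essentially formal: I do not expect any genuine obstacle beyond the bookkeeping issue of the $\lambda$-dependence of the admissible class, which disappears on the small neighborhood where the argument is deployed.
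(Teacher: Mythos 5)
Your proposal is correct and follows essentially the same route as the paper: both rest on the pointwise identity $\frac{I_{\lambda}(u)}{\lambda}-\frac{I_{\lambda'}(u)}{\lambda'}=\frac12\bigl(\frac1\lambda-\frac1{\lambda'}\bigr)\int_{\Sd}|\nabla u|^2\,dV_{g_0}\ge 0$ for $\lambda<\lambda'$, and then pass this inequality through the $\sup$-$\inf$ defining $\overline{\mathcal G}_{\mu,\lambda}$. The only difference is that you spell out the $\lambda$-independence of the admissible class $\mathcal G_{\mu,\lambda}$ on the neighborhood $(\lambda_0-\eps,\lambda_0+\eps)$, a bookkeeping point the paper leaves implicit.
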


\begin{proof}
Just observe that, for $\l < \l'$,
$$ \frac{I_{\l}(u)}{\l}-\frac{I_{\l'}(u)}{\l'} = \frac{1}{2} \left( \frac{1}{\l} - \frac{1}{\l'} \right) \int_{\Sd} \left| \nabla u \right|^2\,dV_{g_0} \geq 0. $$

Since $\overline{\mathcal{G}}_{\mu,\l}$ is a min-max value for $I_\l$, the previous estimate implies the monotonicity of $\frac{\overline{\mathcal{G}}_{\mu,\l}}{\lambda}$.
\end{proof}

\begin{proof}[Proofs of Theorem~\ref{thm:equation1} and Theorem~\ref{thm:equation2}] Let $\lambda_0\in(8\pi k,8\pi(k+1))\setminus\Gamma(\diva_\ell)$, for some $k\geq1$ and consider $\lambda\in(\lambda_0-\eps,\lambda_0+\eps)$ as above.
By Lemma \ref{monomini} we obtain that the set
$$
E_k=\left\{ \l \in  (\lambda_0-\eps,\lambda_0+\eps):\ \mbox{ the map } \l \mapsto \overline{\mathcal{G}}_{\mu,\l} \mbox{ is differentiable at} \ \l\right\}.
$$
is dense in $(\lambda_0-\eps,\lambda_0+\eps)$. Moreover, for any $\l \in E_k$, there exists a sequence $u_n \subset X$ which is bounded in $H^1(\Sd)$, $I_{\l}(u_n) \to  \overline{\mathcal{G}}_{\mu,\l}$ and $I_{\l}'(u_n) \to 0$, and in turn this implies that for any $\lambda \in E_k$ there exists $u_\lambda$ critical point of $I_{\lambda}$. Indeed since $u_n$ is bounded, up to a subsequence, $u_n \weakto u_\l$ and standard arguments show that $u_n \to u_\l$ strongly where $u_\l$ is a critical point for $I_\l$.

Finally, since we are assuming $(\Sigma,g)=(\Sd,g_0)$ we can apply Theorem~\ref{thm:compactness} and find a solution $u_{\lambda_0}$ to \eqref{equation} with $\lambda=\lambda_0$.\\ 
This concludes the proofs of Theorem~\ref{thm:equation1} and Theorem~\ref{thm:equation2}.

\end{proof}

\

At last we prove the non existence result, namely Theorem~\ref{nonexistence}.

Given a point $p\in \Sd$ and a function $F \in \mathcal{F}_p$, defined in \eqref{setRP}, the strategy to prove the Theorem~\ref{nonexistence} is to construct a function $K_F$ defined in $\Sd$ such that the stereographic projection of $\tilde{K}=Ke^{-h_1}$ in $\mathbb{R}^2$ is a radial function which verifies the monotonicity condition \eqref{NE} and then apply Theorem \ref{nethm:ChenLi}.

\begin{proof}[Proof of Theorem~\ref{nonexistence}]
Let us fix a function $F=F(\varphi)$ in $\mathcal{F}_p$ expressed in spherical coordinates, where without loss of generality we can suppose $p=(0,0,1)$. 
Let $h$ be the regular part of the function $h_1$ introduced in \eqref{accame} and define
\beq\label{KR}
K_F(\varphi)=F(\varphi)e^{h(\varphi)}g_\lambda(P(\varphi)) \quad \mbox{with $\varphi \in \left(0,\pi \right]$ }, \quad K_F(0)=0,
\eeq
where $P:(0,\pi]\to\R^2$ is the stereographic projection of $\Sd$ into $\R^2$ and $g_\lambda(y)=\left(\dfrac4{(1+|y|^2)^2}\right)^{\frac{\lambda}{8\pi}-1}$ for $y\in\R^2$.
By \eqref{KR} we have that
$$
\tilde{K}_F(\varphi)=K_F(\varphi)e^{-h_1(\varphi)}= F(\varphi)\varphi^{2\alpha}g_\lambda(P(\varphi)) \quad \mbox{with $\varphi \in \left(0,\pi \right]$ },
$$
where $\log(\varphi)^{2\alpha}$ corresponds to the singular part of $-h_1$ in spherical coordinates.

Now, as done in Section~\ref{Scom}, by means of the stereographic projection we transform \eqref{equation} into
$$
-\Delta v  = \hat{K}_{F,\lambda}\, e^{v}  \qquad \text{in $\mathbb{R}^2$,}
$$
where $v$ satisfies \eqref{ieqv} and
\[
\hat{K}_{F,\lambda}(y)=\tilde K_F(P^{-1}(y))\lambda g_{\lambda}^{-1}(y)=\lambda F(P^{-1}(y))(P^{-1}(y))^{2\alpha}
\]
is bounded and verifies condition \eqref{NE}, being $F\in\mathcal F_p$.\\ 
At last to conclude it suffices to apply Theorem ~\ref{nethm:ChenLi} with $R=\hat{K}_{F,\lambda}$.
\end{proof}

\subsection*{Acknowledgements}
F.D.M. has been supported by FIRB project \textquotedblleft{Analysis and Beyond}\textquotedblright and PRIN $201274$FYK7$\_005$ and R.L.-S. by the Mineco Grant MTM2015-68210-P and by J. Andalucia (FQM116). During the preparation of this work F.D.M. was hosted by University of Granada, while R.L.-S. spent a long period as visiting researcher at University of Rome Tor Vergata, they are grateful to these institutions for the kind hospitality and thank D.Ruiz and G.Tarantello for the invitations. Both authors are grateful to D.Bartolucci, W.Chen, C.Li, D.Ruiz and G.Tarantello for their suggestions and discussions concerning the subject. Last but not least they warmly thank the referee for his accurate and precious review.

\end{document}